\documentclass[11pt, reqno]{amsart}

\usepackage{amscd,amssymb, amsmath,amsfonts, wasysym, mathrsfs, enumitem, stmaryrd, mathtools,hhline,color,enumitem, tikz-cd}
\usepackage{graphicx}
\usepackage[all, cmtip]{xy}

\usepackage{url}

\definecolor{hot}{RGB}{65,105,225}

\usepackage[colorlinks=true, linkcolor=hot ,  citecolor=hot, urlcolor=hot]{hyperref}
\usepackage[backend=biber,
style=alphabetic,
url=false,
doi=false,
backref=true
]{biblatex}
\theoremstyle{definition}
\newtheorem{theorem}{Theorem}[section]
\newtheorem{definition}[theorem]{Definition}

\newtheorem{lemma}[theorem]{Lemma}
\newtheorem{proposition}[theorem]{Proposition}
\newtheorem{corollary}[theorem]{Corollary}

\newtheorem{question}[theorem]{Question}
\newtheorem*{theorem*}{Theorem}

\theoremstyle{remark}
\newtheorem{remark}[theorem]{Remark}
\newtheorem{example}[theorem]{Example}

\newcommand\cA{{\mathcal A}}

\renewcommand{\P}{\mathbb{P}}

\def\cA{\mathcal{A}}
\def\Q{\mathbb{Q}}
\def\C{\mathbb{C}}

\def\R{\mathbb{R}}

\def\Z{\mathbb{Z}}

\newcommand{\Trop}{\trop}
                  
\DeclareMathOperator{\Gr}{Gr}
\DeclareMathOperator{\rat}{rat}
\DeclareMathOperator{\MHM}{MHM}
\DeclareMathOperator{\HM}{HM}

\DeclareMathOperator{\Blowup}{Bl}

\def\cM{\mathcal{M}}

\def\cC{\mathcal{C}}

\def\cX{\mathcal{X}}

\def\cT{\mathcal{T}}

\def\cS{\mathcal{S}}

\def\trop{\mathrm{trop}}

\title{Topology and Euler characteristics of tropical varieties}

\author{Scott Hiatt}
\address{University of Wisconsin-Madison} 
\email{shiatt@wisc.edu}

\author{Connor Simpson}
\address{Sydney Mathematics Research Institute \& University of Sydney}
\email{connorgs@connorgs.net}

\author{Botong Wang}
\address{University of Wisconsin-Madison} 
\email{wang@math.wisc.edu}

\author{Chenxi Wu}
\address{University of Wisconsin-Madison} 
\email{cwu367@math.wisc.edu}

\begin{document} 

\begin{abstract}
We study Euler characteristics of tropical subvarieties of tropical abelian varieties. We prove that every H-regular subvariety, locally modeled on tropicalizations of sufficiently well-behaved very affine varieties, has nonnegative signed Euler characteristic. This gives a tropical analogue of a theorem of Green--Lazarsfeld for subvarieties of complex abelian varieties. The main input is a local vanishing theorem for H-regular tropical fans, which also yields a Lefschetz-type theorem for affine H-regular tropical varieties. We further show that the signed Euler characteristic inequality fails for general tropical subvarieties of tropical abelian varieties, and we construct a 3-dimensional tropical fan whose link is not homotopy equivalent to a bouquet of 2-spheres.
\end{abstract}

\maketitle

\section{Introduction}
Let $X$ be a smooth subvariety of a complex abelian variety. A theorem of Green and Lazarsfeld (\cite{GL}) states that the Euler characteristic of $\mathcal{O}_X$ satisfies
\begin{equation*}
(-1)^{\dim X} \chi(X, \mathcal{O}_X)\geq 0.
\end{equation*}

In this paper, we seek an analog of the above inequality for subvarieties of tropical abelian varieties.
A tropical abelian variety is a real torus with a positive definite quadratic form. Our main result concerns H-regular subvarieties, which are locally isomorphic to tropicalizations of certain nice subvarieties of the affine torus (see Definitions \ref{defn:H-regular1} and \ref{defn:H-regular2}).
We prove that the analog of the theorem of Green--Lazarsfeld holds for such tropical subvarieties.

\begin{theorem}\label{thm:abelian variety}
    Let $\cA$ be a tropical abelian variety (see Definition \ref{defn:tropicalAV}) and let $\cX$ be a subvariety of $\cA$ of pure dimension $d$. If $\cX$ is H-regular, then
    \[
    (-1)^d\chi(\cX)\geq 0.
    \]
\end{theorem}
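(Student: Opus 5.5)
The plan is to express $\chi(\cX)$ as a sum of local contributions, each controlled by the local vanishing theorem for H-regular tropical fans mentioned in the abstract. This mirrors the classical strategy for subvarieties of complex abelian varieties, where one uses a generic translation-invariant object to localize.

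\textbf{Step 1: a polyhedral decomposition.} Fix a polyhedral structure on $\cX$ in which every cell is contained in the image of an affine-linear polyhedron in the universal cover $\R^n \to \cA$. Since $\cA$ is compact, this can be done with finitely many cells. Then
\[
\chi(\cX)=\sum_{\sigma}(-1)^{\dim\sigma}.
\]
We regroup this sum using stars. For each cell $\sigma$, the open star of $\sigma$ is locally modeled on $\R^{\dim\sigma}\times \Sigma_\sigma$. Here $\Sigma_\sigma$ is the local fan (the star fan) of $\cX$ at $\sigma$, which is an H-regular tropical fan of dimension $d-\dim\sigma$ by Definitions \ref{defn:H-regular1} and \ref{defn:H-regular2}.

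\textbf{Step 2: localize the Euler characteristic by a linear functional.} Choose a generic linear form $\ell$ on $\R^n$, that is, a generic constant $1$-form on $\cA$. Since $\cA$ is a torus, we can replace $\chi(\cX)$ by a count of "critical points" of the closed $1$-form $\ell|_{\cX}$, in the spirit of Morse theory for closed $1$-forms. Equivalently, we use the fact that $\chi$ is multiplicative and vanishes on circle-fibration pieces.

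Concretely, take the infinite cyclic (or $\Z^n$) cover $\tilde\cX\subset\R^n$ and a generic linear height function $\ell$. Then $\chi(\cX)$ equals the sum, over vertices $v$ of $\cX$ (after subdividing so that $\ell$ is nonconstant on every edge), of the local Morse indices
\[
\chi(\mathrm{Star}_v)-\chi(\mathrm{Star}_v\cap\{\ell<\ell(v)\}) .
\]
This is the usual Morse-theoretic identity for piecewise-linear functions on compact complexes, applied to a fundamental domain. The boundary contributions cancel by deck-translation invariance of $\ell$: $\chi$ of a circle-valued fibered situation is zero. Each local term equals $\chi(\Sigma_v,\Sigma_v\cap\{\ell<0\})$ for the local fan $\Sigma_v$.

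\textbf{Step 3: apply local vanishing.} The local vanishing theorem for H-regular fans, applied to the fan $\Sigma_v$ of dimension $d$ and a generic linear functional, says that the relative cohomology $H^i(\Sigma_v,\Sigma_v\cap\{\ell<0\})$ vanishes for $i\neq d$. This is the tropical analogue of the statement that the complex link is a bouquet of spheres of middle dimension. It gives
\[
(-1)^d\chi(\Sigma_v,\Sigma_v\cap\{\ell<0\})=\operatorname{rank}H^d\geq0.
\]
Summing over vertices $v$ yields $(-1)^d\chi(\cX)\ge0$.

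\textbf{Main obstacle.} I expect the hardest part to be Step 2: making the Morse-theoretic localization rigorous on the compact torus, where $\ell$ is only a closed $1$-form rather than a function. I would handle it by cutting $\cA$ along a generic hyperplane-subtorus transverse to $\ell$, which reduces to a compact fundamental domain. The cut contributes two copies of a $(d-1)$-dimensional slice with opposite sign, and these cancel. I also need genericity of $\ell$ at every vertex simultaneously. This is achievable because each vertex has only finitely many rays, so only finitely many hyperplanes of $\ell$ must be avoided.
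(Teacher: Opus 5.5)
Your proposal is correct and follows essentially the paper's argument. The paper takes a general Lie group map $f_\cA\colon \cA\to S^1$, which is a generic \emph{integral} linear form; your cutting along level subtori implicitly requires integrality as well. It then uses circle-valued Morse theory to write $\chi(\cX)$ as a sum over vertices of local terms $1-\chi(L^{\leq}_{\ell,v})$, and applies Theorem~\ref{thm:local} to make each $(-1)^d$-signed term nonnegative.
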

By \cite[Example 17]{TropicalHomology}, $\chi(\cX)=\sum_{q}(-1)^q h_{0, q}(\cX)$, where $h_{0, q}(\cX)$ are the tropical homology Hodge numbers. When $\cX$ is defined by a nice degeneration of subvarieties of abelian varieties, our theorem is a consequence of \cite{GL} and \cite[Theorem 1]{TropicalHomology}. 

The proof of Theorem \ref{thm:abelian variety} (Section \ref{sec:morseargument}) is motivated by the approach in \cite{LMW} and uses Morse theory.
The Morse-theoretic argument rests upon the following local property of H-regular tropical varieties.

\begin{theorem}\label{thm1.2}(Theorem \ref{thm:local})
    Let $\cT$ be a $d$-dimensional H-regular tropical fan in $\R^n$. Let $\ell: \R^n\to \R$ be a linear function that does not vanish on any of the rays of $\cT$. Let $\cT_{\leq 0}=\cT\cap \{\ell\leq 0\}$ be the nonpositive part of $\cT$ and let $L(\cT_{\leq 0})$ be its link. Then the reduced homology of $L(\cT_{\leq 0})$ satisfies
    \[
    \widetilde{H}_k(L(\cT_{\leq 0}); \Q)=0 \quad \text{for all }k\neq d-1. 
    \]
\end{theorem}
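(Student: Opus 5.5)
We need to prove Theorem \ref{thm1.2}. H-regular presumably means that $\cT$ is locally the tropicalization of a very affine variety $Y\subset (\C^*)^n$ for which the tropical homology of $\cT$ (and of its stars) agrees with the cohomology of $Y$ (Hodge-theoretic regularity). The plan is to transfer the vanishing to a complex-geometric statement: a Lefschetz/Artin-type vanishing for a "half" of $Y$ cut out by the linear function $\ell$.

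\textbf{Step 1: combinatorial reduction.} Since $\ell$ does not vanish on any ray, $L(\cT_{\le 0})$ is homotopy equivalent to the union of the cones of $\cT$ lying in $\{\ell\le 0\}$, after a suitable subdivision. It deformation retracts onto the subcomplex of the link spanned by the rays with $\ell<0$. Using the exact sequence of the pair $(L(\cT), L(\cT_{\le 0}))$ and the fact that $L(\cT)\setminus L(\cT_{\le0})\simeq L(\cT_{\ge 0})$, I would reduce to a statement about relative homology, computed cellularly by the Borel–Moore/cellular chains of the fan restricted to the half-space.

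\textbf{Step 2: complex model.} Let $Y$ be the very affine variety with $\trop(Y)=\cT$ provided by H-regularity, and let $\chi_\ell$ be the character of the torus corresponding to the cocharacter dual to $\ell$. Consider $U=\{y\in Y: |\chi_\ell(y)|\le 1\}$ (or, more precisely, a tubular/Milnor-type region near the boundary of a tropical compactification). The Lefschetz-type principle says that, for a generic fiber, $Y$ is built from $\{|\chi_\ell|<\epsilon\}$ by attaching cells of index $\ge d$; this follows from Andreotti–Frankel for the Stein variety $Y$ together with the stratified Morse theory of $-\log|\chi_\ell|$. The genericity of $\ell$ (no vanishing on rays) guarantees that $\chi_\ell$ has no critical points at infinity in the tropical compactification, so this Morse function is proper on the relevant region.

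\textbf{Step 3: comparison.} Use H-regularity (the weight/Hodge degeneration identifying $\widetilde H_k$ of links with graded pieces of the cohomology of the corresponding local Milnor fibers, or the tropical homology $F_p$ groups) to match $\widetilde H_k(L(\cT_{\le0}))$ with a graded piece of $H^{*}(Y,U)$ or $H^*(U)$. Concretely, $\widetilde H_k(L(\cT_{\le0}))$ should equal the lowest-weight piece $\Gr^W$ of $H^{k+1}(Y, Y_{|\chi_\ell|\ge 1})$; vanishing outside $k+1=d$ then comes from the Artin vanishing of Step 2 in one direction and, in the other, from the weight bound: the lowest-weight part of a degree-$j$ cohomology with $j<d$ vanishes by purity of the boundary strata supplied by H-regularity.

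\textbf{Main obstacle.} The comparison in Step 3 is the hard part: identifying the combinatorial link homology of the half-fan precisely with a weight-graded piece of a relative cohomology group, compatibly with $\ell$. I would attempt it by writing both sides as the $E_1$/$E_2$ page of the weight spectral sequence for the tropical compactification restricted to the half-space. H-regularity should force degeneration so that the relevant row is exactly the cellular complex of $L(\cT_{\le0})$. If this fails in general, a fallback is induction on $d$ via the stars of rays: each star is again H-regular, and a Mayer–Vietoris argument over the rays with $\ell<0$ would propagate the vanishing.
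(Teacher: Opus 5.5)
Your outline has the right overall shape: identify the half-link with a weight-graded piece of the cohomology of an algebraic model, then get vanishing from Artin vanishing in one range and weight bounds in the other. But Steps 2--3, which carry all the content, do not work as written. The set $Y_{|\chi_\ell|\ge 1}$ is only semi-algebraic, so $H^{k+1}(Y,Y_{|\chi_\ell|\ge1};\Q)$ carries no mixed Hodge structure, and its ``lowest-weight piece'' is undefined. The Morse theory of Step 2 is also unsupported, for four reasons. First, $\log|\chi_\ell|$ is not proper on $Y$. Second, $\chi_\ell$ is indeterminate on the compactification wherever a boundary divisor with $\ell>0$ meets one with $\ell<0$, so ``no critical points at infinity'' is the whole difficulty, not a consequence of genericity. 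Third, Andreotti--Frankel concerns plurisubharmonic exhaustions, not the pluriharmonic $\log|\chi_\ell|$. Fourth, as stated, both Step 2 (cells of index $\ge d$) and your weight bound give vanishing only below degree $d$, so degrees above $d$ are never covered.

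What is missing is an \emph{algebraic} model of the half-space that is proper and birational over an affine variety. The paper builds it in three steps, writing $U$ for your $Y$ and $\Delta$ for the fan of its H-regular compactification. It takes a rational simplicial cone $\sigma_\cT\subset\{\ell>0\}\cup\{0\}$ that contains every ray of $\Delta$ with $\ell>0$ in its interior and is such that $L(\cT\setminus\sigma_\cT)$ deformation retracts onto $L(\cT_{\le0})$ (Proposition~\ref{prop:Delta plus}). It refines $\Delta$ by stellar subdivisions to a fan with a subfan $\Delta^+$ of support $\sigma_\cT$. It lets $U^+$ be the closure of $U$ in $X_{\Delta^+}$, which is proper and birational over an affine variety via $X_{\Delta^+}\to X_{\sigma_\cT}$. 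Since $U^+$ is not affine, Artin vanishing cannot be applied to it directly. Instead, the decomposition theorem, Artin vanishing for $IH$ of the affine image, and Saito's bound on the Hodge filtration of the summands with proper support give $\mathrm{Gr}^W_{2d}H^i(U^+;\Q)=0$ for $i\neq d$ (Proposition~\ref{prop:weight2}). A Gysin-type spectral sequence then gives $\widetilde H_i(K;\Q)\cong \mathrm{Gr}^W_{2d}H^{2d-i-1}(U^+;\Q)$ for the dual complex $K$ of $\overline U\setminus U^+$ (Proposition~\ref{prop:dual complex}).

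The second gap is that you guessed the wrong notion of H-regularity, and the real one contains a hypothesis your argument never uses. $U$ must have a closure in a smooth projective toric variety that is smooth, meets every orbit transversely, and meets each orbit in an empty or \emph{connected} set. Connectedness is indispensable. Section~\ref{sec:counterexample} produces a sch\"on $U$ with one disconnected orbit intersection. For $\ell$ negative on the doubled ray, $L(\trop(U)_{\le 0})$ is the connected half-link of a Bergman fan with two points identified, so $\widetilde H_1\neq 0$. Your Step 2 uses only smoothness and affineness of $Y$, and your Step 3 appeals to a ``Hodge degeneration'' that is not part of the hypothesis, so nothing in your argument distinguishes this example. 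Weight arguments only ever see the dual complex of the boundary, with one simplex per component of each stratum. What makes $H^0(D_I;\Q)=\Q$ and identifies $K$ with $L(\cT\setminus\sigma_\cT)\simeq L(\cT_{\le 0})$ is the connectedness of every $\overline U\cap O_\sigma$, preserved under the stellar refinements by Lemmas~\ref{lemma:blowup} and~\ref{lemma:connected-blowup}. Your fallback does not help either. Induction on stars of rays with Mayer--Vietoris uses only local data, which cannot detect the global connectivity the statement asserts: the union of two transverse $2$-planes in $\R^4$ has every star of a ray a linear space, yet its half-links are disconnected.
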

To prove Theorem \ref{thm1.2}, we generalize the argument of \cite{Hacking} (see also \cite{Payne}). Instead of studying the mixed Hodge structures on the cohomology groups of an affine variety, we study a quasi-projective variety that admits a proper birational map to an affine variety.

As explained in Example \ref{ex:matroid}, the Bergman fan and the augmented Bergman fan of a matroid realizable over $\C$ are both H-regular. Hence we obtain a different proof of a slightly weaker version of \cite[Theorem 2.1]{AB21}, and thereby answer a question of Mikhalkin--Ziegler for matroids realizable over $\C$ (see \cite{MikhalkinZiegler2008}).

\begin{corollary}\label{cor:Bergman}
    Let $\cT$ be the Bergman fan of a rank $d+1$ matroid realizable over $\C$, or the augmented Bergman fan of a rank $d$ matroid realizable over $\C$. With the notation of Theorem \ref{thm:local}, we have
    \[
    \widetilde{H}_k(L(\cT_{\leq 0}); \Q)=0 \quad \text{for all }k\neq d-1. 
    \]
\end{corollary}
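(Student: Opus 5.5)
The corollary should follow directly from Theorem \ref{thm:local} (Theorem \ref{thm1.2}). The only thing to check is that both fans in question are H-regular tropical fans of dimension $d$; after that, the vanishing is a literal specialization. I will therefore organize the proof around verifying the hypotheses of Definitions \ref{defn:H-regular1} and \ref{defn:H-regular2} for these fans, as outlined in Example \ref{ex:matroid}.

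\textbf{Step 1: the Bergman fan.} Let $M$ be a rank $d+1$ matroid on $E=\{0,\dots,n\}$ realized by a hyperplane arrangement $\mathcal{H}$ in $\P^d$ over $\C$. The complement $U=\P^d\setminus\bigcup\mathcal{H}$ is a very affine variety of dimension $d$, embedded in the torus $(\C^*)^{n+1}/\C^*$. By the theorem of Ardila--Klivans and Sturmfels, $\trop(U)$ is the Bergman fan of $M$, which has dimension $d$ (one less than the rank).

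We then check that $U$ satisfies the niceness conditions in the definition of H-regularity. The key point is that the wonderful compactification of De Concini--Procesi gives a smooth compactification $\overline U$ with simple normal crossings boundary. Its strata are indexed by flats and are again arrangement complements, so $U$ is schön. Moreover, the combinatorics of the boundary matches the fan structure: the tropical compactification is exactly the toric closure in the toric variety of the Bergman fan with its fine subdivision. Whatever extra condition H-regularity imposes on the mixed Hodge structure (for instance purity or weight conditions on the cohomology of the strata or of initial degenerations) is satisfied, since arrangement complements have pure Tate cohomology of weight $2k$ in degree $k$.

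\textbf{Step 2: the augmented Bergman fan.} Let $M$ be a rank $d$ matroid realized by a linear subspace $V\subset\C^n$ of dimension $d$ that is not contained in any coordinate hyperplane. Then $V$ is a very affine variety only after removing coordinate hyperplanes, so the realization uses a different model. Following Example \ref{ex:matroid}, I expect the augmented Bergman fan to be (up to linear isomorphism) the Bergman fan of the free coextension of $M$, or equivalently the tropicalization of $V\cap(\C^*)^n$ after a suitable embedding. Concretely, homogenizing $V$ gives a rank $d+1$ realizable matroid on $n+1$ elements whose Bergman fan is the augmented Bergman fan. The result then reduces to Step 1 with dimension $d$.

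\textbf{Conclusion and main obstacle.} With both fans identified as $d$-dimensional H-regular tropical fans, applying Theorem \ref{thm:local} with the given $\ell$ yields $\widetilde H_k(L(\cT_{\leq 0});\Q)=0$ for $k\neq d-1$. The main obstacle is Step 2: the augmented Bergman fan is not literally a Bergman fan of $M$, and H-regularity may depend on the chosen fan structure and embedding. I would first try to exhibit the realization explicitly as the complement of an arrangement, consisting of the hyperplanes of $V$ together with one hyperplane at infinity, and confirm that the canonical (augmented) fan structure matches the one in which Definition \ref{defn:H-regular2} is verified.
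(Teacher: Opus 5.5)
Your top-level plan is the paper's: the corollary is Theorem \ref{thm:local} combined with the H-regularity asserted in Example \ref{ex:matroid}. The gap is that you never verify H-regularity as it is actually defined.

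Definition \ref{defn:H-regular1} contains no Hodge-theoretic condition; Hodge theory enters only in the proof of Theorem \ref{thm:local}. Besides smoothness and transversality, the definition requires two things: the ambient toric variety must be smooth and \emph{projective}, and every intersection $\overline U\cap O$ must be empty or \emph{connected}. Your Step 1 arrives at sch\"onness and then appeals to a purity condition that does not exist. But sch\"onness is exactly what is not enough. The paper stresses that connectedness is the whole difference, and Section \ref{sec:counterexample} exhibits a sch\"on $U$ with a disconnected orbit intersection whose tropical link violates the vanishing. Moreover, the toric variety of the finely subdivided Bergman fan is not complete (unless $M$ is Boolean), so it cannot serve as the projective ambient space.

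The correct check uses the permutohedral fan $\Delta_P$, which is complete, unimodular, and contains the fine subdivision as a subfan. The closure of $U$ in $X_{\Delta_P}$ is the maximal wonderful model. An orbit meets it only if its cone corresponds to a chain of nonempty proper flats $F_1\subsetneq\cdots\subsetneq F_k$. In that case the intersection is a product of complements of arrangements realizing the loopless minors $M|F_{i+1}/F_i$ (with $F_0=\emptyset$, $F_{k+1}=E$). Such a product is irreducible, hence connected. Your remark that the strata are arrangement complements is the right ingredient; it just has to be aimed at the connectedness condition.

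Step 2 contains a concrete error. For linear $V$, $\trop(V\cap(\C^*)^n)$ is the Bergman fan of $M$ (with its lineality space), and homogenizing a linear $V$ only adjoins a coloop. You need a general translate $V'=V+w$, as in Example \ref{ex:matroid}. Homogenizing $V'$ gives $\{(t,v+tw)\}\subset\C^{n+1}$, which realizes the free coextension of $M$. Since $V'\cap(\C^*)^n$ is exactly its projective arrangement complement, your reduction to Step 1 then works on supports.

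The paper avoids the fan-structure concern you raise by closing $V'\cap(\C^*)^n$ directly in the stellahedral toric variety $X_{\Delta_S}$. Because $\Pi_M$ is a subfan of $\Delta_S$, the given fan structure is the one for which H-regularity holds. Alternatively, perturb $\ell$ slightly and generically so that it is nonzero on the rays of both fan structures. This does not change the homotopy type of $L(\cT_{\leq 0})$, which, by the simplexwise retraction in the proof of Proposition \ref{prop:Delta plus}, depends only on the signs of $\ell$ on the rays of a simplicial fan structure.
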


Another consequence of Theorem \ref{thm:local} is the following Lefschetz-type result, comparable to \cite[Theorem 7.4]{AB21}. 

\begin{theorem}\label{thm:Lefschetz}
    Let $\cX\subset \R^n$ be an affine H-regular tropical variety of dimension $d$. Let $H\subset \R^n$ be a general affine hyperplane, that is, an affine hyperplane that is not parallel to any 1-dimensional face of $\cX$. Then 
    \[
    H_k(\cX, \cX\cap H; \Q)=0 \quad \text{for all }k\neq d.
    \]
\end{theorem}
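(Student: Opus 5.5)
We argue by Morse theory on $\cX$, using the linear function $\ell:\R^n\to\R$ whose level sets are parallel to $H$; write $H=\{\ell=c\}$. By the genericity assumption, $\ell$ is nonconstant on every edge (1-dimensional face) of $\cX$, so $\ell|_{\cX}$ is a ``Morse function'' on the polyhedral complex: its critical values are the finitely many values $\ell(v)$ at vertices $v$ of $\cX$. First, by excision/deformation we reduce the relative homology to two half-problems. Set $\cX_{\le c}=\cX\cap\{\ell\le c\}$ and $\cX_{\ge c}=\cX\cap\{\ell\ge c\}$. By Mayer--Vietoris (or excision) applied to $\cX=\cX_{\le c}\cup\cX_{\ge c}$ with intersection $\cX\cap H$,
\[
H_k(\cX,\cX\cap H)\cong H_k(\cX_{\le c},\cX\cap H)\oplus H_k(\cX_{\ge c},\cX\cap H),
\]
so by symmetry ($\ell\mapsto-\ell$) it suffices to show $H_k(\cX_{\ge c},\cX\cap H)=0$ for $k\ne d$.

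Next, sweep upward. Order the vertices of $\cX$ with $\ell(v)>c$ and choose regular values $c=t_0<t_1<\dots<t_m$ with exactly one critical value in each $(t_{i-1},t_i)$ (perturbing is not needed: several vertices at the same level contribute a direct sum). Also take $t_m$ beyond all vertices; then $\cX_{\ge t_m}$ deformation retracts (along $\ell$-increasing flow on the unbounded cells, which are products of rays in recession directions on which $\ell$ is nonconstant and cone-like) onto $\cX\cap\{\ell=t_m\}$, so $H_*(\cX_{\ge t_m},\cX\cap\{\ell = t_m\})$... more conveniently we use the long exact sequences of triples $(\cX_{[c,t_i]},\cX_{[c,t_{i-1}]},\cX\cap H)$ together with the fact that above the last vertex, $\cX_{[c,t]}$ for $t\ge t_m$ deformation retracts onto $\cX_{[c,t_m]}$ and $\cX_{\ge c}$ is the increasing union; since homology commutes with such unions it suffices to show each step $H_k(\cX_{[c,t_i]},\cX_{[c,t_{i-1}]})$ vanishes for $k\neq d$, and then induct (the boundary layer $\cX_{\ell=t}$ is handled similarly by pushing the slab boundary down, a standard polyhedral stratified Morse lemma: between critical values the sublevel sets are isotopic).

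The key local step: by excision the step group is $\bigoplus_v H_k(\mathrm{St}_v\cap\{\ell\le \ell(v)+\epsilon\},\ \mathrm{St}_v\cap\{\ell\le\ell(v)-\epsilon\})$ over vertices $v$ at that level, where $\mathrm{St}_v$ is a small star, identified with the local fan $\cT=\mathrm{Star}_v\cX$ (translated to the origin), which is an H-regular tropical fan of dimension $d$ since $\cX$ is H-regular. The pair is homotopy equivalent to (cone over link, $L(\cT_{\le 0})$), i.e. $H_k\cong\widetilde H_{k-1}(L(\cT_{\le0});\Q)$. Genericity says $\ell$ vanishes on no ray of $\cT$, so Theorem~\ref{thm1.2} gives vanishing unless $k-1=d-1$, i.e. $k=d$. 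The long exact sequences then give $H_k(\cX_{\ge c},\cX\cap H)=0$ for $k\ne d$ (the attaching pieces only live in degree $d$, so by induction each $H_k(\cX_{[c,t_i]},\cX\cap H)$ vanishes for $k\ne d$).

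The main obstacle is making the polyhedral Morse theory rigorous for the noncompact complex: proving that with no vertex in $[a,b]$ the inclusion $\cX_{[c,a]}\hookrightarrow\cX_{[c,b]}$ is a homotopy equivalence (rel $H$), and identifying the local pair at a vertex with the cone/link pair. I would do this by choosing a vector field on each cell of $\cX$ that is piecewise linear, tangent to cells, increases $\ell$ at unit rate, and is compatible on faces — possible exactly because $\ell$ is nonconstant on all edges — e.g. via a subdivision and a barycentric/straight-line construction as in stratified Morse theory for polyhedral complexes.
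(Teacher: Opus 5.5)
Your proposal is correct and is essentially the paper's argument: a Morse sweep by the linear function $\ell$ with $H=\{\ell=c\}$, where each critical step is identified with $\bigoplus_v \widetilde{H}_{k-1}(L^{\leq}_{\ell,v};\Q)$ (vanishing for $k\neq d$ by Theorem~\ref{thm:local}), followed by induction along long exact sequences of triples; your initial excision splitting into the halves $\{\ell\le c\}$ and $\{\ell\ge c\}$ is equivalent to the paper's combination of its upward and downward sweeps. The one point to tidy is that $c=t_0$ need not be a regular value, since the hypothesis allows $H$ to pass through vertices, but $H_*(\cX_{[c,c+\epsilon]},\cX_c;\Q)$ still vanishes because near such a vertex both sets are star-shaped about it and $\ell$ is nonconstant on every edge there (the paper uses this implicitly when it asserts that $\cX_c\subset\cX_{[c,b_1-\epsilon]}$ is a homotopy equivalence).
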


Finally, in Example \ref{ex:inequality fails}, we demonstrate that the inequality of Theorem~\ref{thm:abelian variety} does not hold for arbitrary pure-dimensional subvarieties. In Section~\ref{sec:counterexample}, using similar ideas, we construct a Sch\"on subvariety of $(\C^*)^8$ whose tropicalization has a link that is not homotopy equivalent to a bouquet of spheres. This shows that the question in \cite[Remark~2.11]{Hacking} has a negative answer, even among Sch\"on subvarieties.

\subsection*{Outline of the paper} 
In Section \ref{sec:Hregular}, we introduce H-regular tropical varieties. Section \ref{sec:transversality} develops a notion of transversality in toric varieties and establishes its stability under toric modifications. In Section \ref{sec:weight filtration}, we prove a vanishing theorem for the top weights of cohomology using mixed Hodge modules and the decomposition theorem. Section \ref{sec:dual complex} relates the cohomology of complements of divisors to the homology of their dual complexes.

In Section \ref{sec:localHreg}, we prove the local vanishing theorem for H-regular tropical varieties.
In Section \ref{sec:morseargument}, we deduce the main theorem on the signed Euler characteristic of tropical subvarieties of tropical abelian varieties via a Morse-theoretic argument. In Section \ref{sec:counterexample}, we construct an explicit example of a non-H-regular affine tropical variety and show that it does not exhibit a vanishing property proved by Hacking for H-regular affine tropical varieties. Finally, in Section \ref{sec:questions}, we discuss further questions and possible generalizations of our results.

\subsection*{Acknowledgements}
We are grateful to Kris Shaw for many valuable comments. We also thank Dima Arinkin and Farbod Shokrieh for helpful discussions.

\section{H-regular tropical varieties}\label{sec:Hregular}

The following condition was introduced by Hacking \cite{Hacking}. 

\begin{definition}\label{defn:H-regular1}
Let $U\subset (\mathbb C^*)^n$ be a closed irreducible subvariety. We say that $U$ is \emph{H-regular} if there exists a smooth projective toric variety $X_\Delta$ such that the closure $\overline U$ of $U$ in $X_\Delta$ is smooth and intersects every torus orbit $O\subset X_\Delta$ transversely. Moreover, for every torus orbit \(O\), the intersection $\overline U\cap O$ is either empty or connected.

If such an $X_\Delta$ is fixed, we say that $U$ is H-regular with respect to $X_\Delta$.
\end{definition}

\begin{remark}
    Condition ($*$) in \cite[Theorem 2.5]{Hacking} only requires connectedness when the intersection $\overline{U}\cap O$ is positive dimensional. We need the stronger condition because we will pass to new compactifications by performing weighted blowups along toric boundary strata.
\end{remark}

\begin{definition}\label{defn:H-regular2}
    By a \emph{tropical fan}, we mean a weighted rational polyhedral fan satisfying the balancing condition. In particular, tropicalizations of subvarieties of complex affine tori are tropical fans. 
    
    A tropical fan $\cT\subset \R^n$ is called \emph{H-regular} if $\cT=\trop(U)$ for some H-regular subvariety $U\subset (\C^*)^n$. More generally, a tropical variety $\cX$ is called \emph{H-regular} if, for every vertex $P$ of $\cX$, there is an integral-affine isomorphism from a small neighborhood of $P$ in $\cX$ to a neighborhood of the origin in some H-regular tropical fan.
\end{definition}

\begin{remark}\label{rmk:weight one}
    If $U$ is an H-regular subvariety of $(\mathbb{C}^*)^n$ with respect to a smooth projective toric variety $X_{\Delta}$, then $\trop(U)$ is the union of all cones \(\sigma\in \Delta\) such that $O_\sigma\cap \overline U\neq \emptyset$ \cite[Proposition 6.4.7]{MS}. Moreover, if $\overline U$ meets a torus orbit $O_\sigma\subset X_{\Delta}$ of complementary dimension, then H-regularity implies that $\overline U\cap O_\sigma$ consists of a single point and that the intersection is transverse. It follows that $\sigma$ is a top-dimensional cone of $\trop(U)$ with weight one. Therefore, every top-dimensional cone of an H-regular tropical fan has weight one \cite[Proposition 6.7.2]{MS}.
\end{remark}

\begin{remark}
Because of the connectedness assumption, H-regularity is  strictly stronger than being Sch\"on. Hacking's result is sometimes cited as if it implied that, for every Sch\"on subvariety $U$,
\[
\widetilde{H}_k(L(\trop(U)); \Q)=0 \quad \text{for all } k\neq \dim U-1.
\]
However, we give a counterexample to this statement in Section \ref{sec:counterexample}.
\end{remark}

\begin{example}\label{ex:matroid}
Let $M$ be a loopless matroid realizable over $\mathbb{C}$. Then the \emph{Bergman fan} $\underline{\Pi}_M$ and the \emph{augmented Bergman fan} $\Pi_M$ are both H-regular. 

Indeed, since $M$ is realizable over $\C$, there exists a $\C$-vector space $V$ and an essential hyperplane arrangement $H_1, \dots, H_n\subset V$ whose associated matroid is $M$. Choose an embedding $V\subset \C^n$ such that $H_i$ is the restriction of the $i$-th coordinate hyperplane to $V$. 
Let 
\[
U=\mathbb{P}(V)\cap (\C^*)^{n-1}
\]
and let $\Delta=\Delta_P$ be the $(n-1)$-dimensional permutohedral fan. Then $U$ is H-regular with respect to $X_{\Delta}$, and 
\[
\trop(U)=\underline{\Pi}_M.
\]

Similarly, let $V'\subset \C^n$ be the translation of $V$ by a general vector. Let 
\[
U'=V'\cap (\C^*)^n
\]
and let $\Delta=\Delta_S$ be the $n$-dimensional stellahedron fan. Then $U'$ is H-regular with respect to $X_{\Delta}$, and 
\[
\trop(U')={\Pi}_M.
\]
\end{example}

\begin{example}\label{ex:curve}
Let $\mathcal T$ be a one-dimensional tropical fan in $\mathbb R^2$, and assume that every ray of $\mathcal T$ has weight one. If $\mathcal T$ is not contained in a one-dimensional linear subspace of $\mathbb R^2$, then $\mathcal T$ is H-regular. 

Indeed, choose a complete unimodular fan $\Delta$ in $\mathbb R^2$ that contains $\mathcal T$ as a subfan. The tropical fan $\mathcal T$ determines a divisor class $D$ on the smooth projective toric surface $X_\Delta$. Since the cone of effective divisors of $X_\Delta$ is generated by the torus-invariant divisors, and since $D$ has nonnegative intersection with every torus-invariant divisor, the divisor class $D$ is nef. Hence $D$ is base-point-free.

Moreover, the self-intersection number $D^2$ is equal to the stable self-intersection number of $\cT$, which is strictly positive by the assumption that $\cT$ is not contained in a one-dimensional linear subspace. Hence $D$ is big and base-point-free. It follows, by Bertini's theorem, that a general member of the linear system $|D|$ is irreducible and smooth, and meets the torus orbits transversely. Since all rays of $\cT$ have weight one, each nonempty intersection with a one-dimensional torus orbit consists of a single reduced point.
Thus, if $U$ denotes the restriction of such a general member to the dense torus $(\mathbb C^*)^2$, then $U$ is H-regular with respect to $X_\Delta$, and 
\[
\cT=\trop(U).
\]
\end{example}

Let $\cT$ be a tropical fan in $\R^n$, and let $\ell: \R^n\to \R$ be a linear function. Set 
\[
\cT_{\leq 0}=\cT\cap \{\ell\leq 0\},
\]
and denote the links of $\cT$ and $\cT_{\leq 0}$ at the origin by $L(\cT)$ and $L(\cT_{\leq 0})$, respectively.

\begin{theorem}\label{thm:local}
    Let $\cT$ be a $d$-dimensional H-regular tropical fan.
    If the linear function $\ell$ does not vanish on any of the rays of $\cT$, then
    \[
    \widetilde{H}_k(L(\cT_{\leq 0}); \Q)=0 \quad \text{for all }k\neq d-1. 
    \]
  \end{theorem}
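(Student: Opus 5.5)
We adapt Hacking's argument. Hacking identifies the reduced homology of the link $L(\trop(U))$ with the top-weight graded piece of the cohomology of $U$, and then uses that $U$ is affine to kill cohomology above the middle degree. Here we replace $U$ by a quasi-projective variety $V$ whose boundary dual complex is $L(\cT_{\leq 0})$, and which admits a proper birational map to an affine variety.

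\textbf{Step 1: compactification data.} Let $U\subset(\C^*)^n$ be H-regular with respect to $X_\Delta$ and $\cT=\trop(U)$. Refine $\Delta$ by toric blowups so that every cone of $\Delta$ lies in $\{\ell\le 0\}$ or in $\{\ell\ge 0\}$. The stability of transversality under toric modifications (Section \ref{sec:transversality}) keeps $\overline U$ smooth and transverse to the orbits. Because we imposed connectedness for every orbit, the H-regular condition should also persist after these weighted blowups. Then, by Remark \ref{rmk:weight one}, the boundary $D=\overline U\setminus U$ is a simple normal crossing divisor. Its dual complex is the cone complex of $\cT$ intersected with $\Delta$, which is homeomorphic to $L(\cT)$. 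Components of $D$ correspond to rays $\rho$ of $\cT$, and $\ell(\rho)\ne0$ by hypothesis.

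\textbf{Step 2: the auxiliary variety.} Let $D_-$ be the union of the components with $\ell(\rho)<0$, and $D_+$ the union of those with $\ell(\rho)>0$. Set $V=\overline U\setminus D_-$. Connectedness of the strata means the dual complex of $D_-$ (inside $\overline U$) is exactly the subcomplex of cones lying in $\{\ell\le0\}$. Up to homotopy this is $L(\cT_{\leq 0})$, since a cone meeting both sides has been subdivided. Section \ref{sec:dual complex} then gives
\[
\widetilde H_{k-1}(L(\cT_{\leq 0});\Q)\cong \Gr^W_{2m}H^{m+k}(\overline U\setminus D_-;\Q)
\]
with $m=d$, up to the precise indexing in that section; the top weight of $H^j$ is $2d$. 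So it suffices to show $\Gr^W_{2d}H^j(V)=0$ for $j\ne 2d-(d-1)-1=d$. That is, top weight occurs only in the middle degree $d$.

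\textbf{Step 3: vanishing.} Let $\ell$ correspond to a character $\chi^{u}$. The toric morphism induced by $u$ sends $X_\Delta$ to $\P^1$. Heuristically, $V$ is the part of $\overline U$ where this map does not take the value at the "$\ell<0$ end", so $\chi^u$, or its inverse, is a regular function on $V$ with no poles. Better, the refined fan contains the half-space structure, so $V$ maps properly over the affine toric variety of the cone $\{\ell\ge0\}$-side. Composing, we get a proper birational map $\pi:V\to Y$ onto an affine variety $Y$, namely the affinization. The vanishing theorem for top weights from Section \ref{sec:weight filtration} (mixed Hodge modules plus the decomposition theorem) then gives $\Gr^W_{2d}H^j(V)=0$ for $j<d$ by duality or purity. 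Artin vanishing on the affine $Y$ gives vanishing of $H^j$ for $j>d$ in the relevant top-weight part. Combining with Step 2 yields the claim.

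\textbf{Main obstacle.} The hardest part is Step 3: producing the proper map to an affine variety and checking that the hypotheses of the Section \ref{sec:weight filtration} theorem hold. This needs the fibers of $\pi$ over the deleted $\ell<0$ directions to be controlled, and it is where transversality to orbits and connectedness of $\overline U\cap O$ must be used. I would first try to take $Y$ to be the closure of $U$ in the affine toric variety $\mathrm{Spec}\,\C[\sigma^\vee\cap M]$ for the half-space $\sigma=\{\ell\ge 0\}$, modulo its lineality space, and check properness fan-theoretically.
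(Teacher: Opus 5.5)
There is a genuine gap, and it sits in Step 3, where you suspected it: your $V=\overline U\setminus D_-$ does not admit a proper birational map to an affine variety.

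First, refining so that every cone lies in $\{\ell\le0\}$ or $\{\ell\ge0\}$ necessarily creates new rays on $\{\ell=0\}$, and it requires $\ell$ to be rational. So after Step 1 it is false that every boundary component has $\ell(\rho)\neq0$. Since $D_-$ removes only the components with $\ell(\rho)<0$, $V$ is the closure of $U$ over the subfan supported on the closed half-space $\{\ell\ge0\}$. That cone has lineality space $\{\ell=0\}$. Modulo the lineality space its affine toric variety is $\mathbb{A}^1$, so the map you actually get is $\chi^u\colon V\to\mathbb{A}^1$, which has relative dimension $d-1$.

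Nothing better exists when $d\ge2$. For $c\in\C^*$, the subvariety $W_c=U\cap\{\chi^u=c\}$ has $\trop(W_c)\subset\cT\cap\{\ell=0\}$. Hence its closure in the projective $X_\Delta$ meets only orbits of cones contained in $\{\ell=0\}$, so it lies in $V$ and is a complete $(d-1)$-dimensional subvariety of $V$. These closures sweep out the dense open $U=\bigsqcup_c W_c\subset V$. Therefore every morphism from $V$ to an affine variety has image of dimension at most $1$.

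Without a proper birational map onto a $d$-dimensional affine $Y$, Proposition~\ref{prop:weight2} (decomposition theorem plus Artin vanishing on $Y$) gives no control of $\Gr^W_{2d}H^j(V)$ for $j>d$. Deleting the $\{\ell=0\}$ components as well does not rescue the argument in general. The remaining union of cones in $\{\ell>0\}\cup\{0\}$ is usually not convex, so the toric map to its affinization is not proper.

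The missing idea is Proposition~\ref{prop:Delta plus}. Instead of the half-space, the paper uses a strongly convex rational simplicial cone $\sigma_\cT\subset\{\ell>0\}\cup\{0\}$ that approximates $\{\ell\ge0\}$ from inside; it contains every ray of $\Delta$ with $\ell>0$ in its interior. The paper then refines $\Delta$ by stellar subdivisions so that $\sigma_\cT$ is the support of a subfan $\Delta^+$. With this choice, $U^+=\overline U\cap X_{\Delta^+}$ is proper and birational over the closure of $U$ in the $n$-dimensional affine toric variety $X_{\sigma_\cT}$, so Corollary~\ref{cor:dual complex vanishing} applies.

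The price is that $\overline U\setminus U^+$ now also contains components for rays with $\ell>0$ lying outside $\sigma_\cT$. Its dual complex is therefore not the ``$\ell\le0$'' subcomplex that your Step 2 relies on, and a separate homotopy argument is needed. That argument has two parts: the join-type deformation retraction of $L(\cT\setminus\sigma_\cT)$ onto $L(\cT_{\le0})$ in Proposition~\ref{prop:Delta plus}, and the retraction of $L(\cT\setminus\sigma_\cT)$ onto the subcomplex spanned by the rays outside $\sigma_\cT$ in Proposition~\ref{prop:K and T}.

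Your overall strategy is the same as the paper's: top-weight cohomology of a partial compactification, its boundary dual complex, and Artin vanishing. What fails is the choice of partial compactification.
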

This theorem will be proved in Section \ref{sec:localHreg}.

\begin{definition}\label{defn:tropicalAV}
    A {\em tropical abelian variety} $\cA$ is a real torus $\mathbb{R}^n/\mathbb{Z}^n$ with a positive definite quadratic form on $\mathbb{R}^n$ called the {\em polarization}. A \emph{tropical $d$-cycle} $\cX$ on $\cA$ is a weighted, embedded, rational polyhedral $d$-complex in $\cA$ with integral slopes and satisfying the balancing condition. 
\end{definition}
\begin{remark}
See \cite[Definition 3.3.1]{MS} for the definition of the balancing condition. In this paper, we will not use balancing conditions explicitly. Furthermore, our main results concern tropical varieties whose weights are all one (see Remark \ref{rmk:weight one}). 
\end{remark}

By our main theorem, the sign of the Euler characteristic of an H-regular tropical cycle on a tropical abelian variety is determined by its dimension. Following the idea of \cite{BW}, we construct pure-dimensional subvarieties of a tropical abelian variety that violate the inequality in Theorem \ref{thm:abelian variety}. This shows
that the H-regularity assumption in Theorem~\ref{thm:abelian variety} is necessary. 

\begin{example}\label{ex:inequality fails}
Let $\cC\subset \cA'$ be a theta-type genus 2 curve embedded in its Jacobian (see Figure \ref{fig:curve}, reproduced from \cite[Figure 11]{MZ}). After translating $\cC$, we may assume that one of its vertices is the origin
and that $\cC$ contains no nonzero 2-torsion points of $\cA'$. Let 
\[
\widetilde{\cA}= \cA'\times \cA'\quad \text{and}\quad \widetilde\cX=\cC\times \cC\subset \widetilde{\cA}.
\]
Let $\Gamma$ be the subgroup of \(2\)-torsion points of $\cA'$. Then 
\[
\Gamma \cong \Z/2\Z \times \Z/2\Z.
\]
Consider the diagonal action of $\Gamma$ on $\widetilde{\cA} = \cA' \times \cA'$. 
Let 
\[
\cA=\widetilde{\cA}/\Gamma
\]
and let 
\[
\pi: \widetilde{\cA} \to \cA
\]
be the  quotient map. Set $\cX=\pi(\widetilde{\cX})$. Then $\cX$ is a 2-cycle in $\cA$. We will show that $\chi(\cX)<0$.

Assume the curve $\cC$ in $\cA'$ is represented in Figure \ref{fig:curve}. Translation by one nonzero 2-torsion point $\sigma_1\in A'$ exchanges
$A$ and $B$, that is,
\[
\sigma_1+B=A,\quad\text{and}\quad \sigma_1+A=B.
\]
The other two nonzero 2-torsion points $\sigma_2, \sigma_3$ similarly exchange the pairs $(A, C)$ and $(B, C)$, respectively. Then, 
\[
\widetilde\cX\cap \big(\sigma_1+ \widetilde\cX\big)=\{(A, A), (A, B), (B, A), (B, B)\}
\]
and analogous formulas hold for $\widetilde\cX\cap (\sigma_2+ \widetilde\cX)$  and $\widetilde\cX\cap (\sigma_3+ \widetilde\cX)$. 

The image $\cX=\pi(\widetilde\cX)$ is the quotient of $\widetilde\cX$ by the equivalence relation $x\sim y$ generated by 
\[
y=\sigma_i+x,\qquad i=1, 2, 3.
\]
By the preceding discussion, apart from the reflexive relations, the only nontrivial identifications are
\[
(A, A)\sim (B, B) \sim (C, C), \;\; (A, B)\sim (B, A), \;\; (A, C)\sim (C, A), \;\; (B, C)\sim (C, B).
\]
Therefore, we have
\[
\chi(\cX)=\chi(\widetilde\cX)-5=\chi(\cC)^2-5=1-5=-4.
\]

\begin{figure}[h]
  \centering
  \begin{tikzpicture}
      \draw[thick](0, 0)--(-3, 0);
      \draw[thick](0, 0)--(1, 1);
      \draw[thick](0, 0)--(0, -2);
      \draw[thick](1, 1)--(4, 1);
      \draw[thick](1, 1)--(1, 3);
      \draw[thick](0, -2)--(-1, -3);
      \draw[thick](0, -2)--(3, -2);
      \draw[thick](-1, -3)--(-4, -3);
      \draw[thick](-4, -3)--(-4, -1);
      \draw[thick](-4, -1)--(-3, 0);
      \draw[thick](-3, 0)--(-3, 2);
      \draw[thick](-4, -3)--(-5, -4);
      \draw[thick](4, 1)--(4.3, 1.3);
      \draw[thick](4, 1)--(4, 0.5);
      \draw[thick](1, 3)--(1.3, 3.3);
      \draw[thick](1, 3)--(0.5, 3);
      \draw[thick](3, -2)--(3.3, -1.7);
      \draw[thick](3, -2)--(3, -2.5);
      \draw[thick](-1, -3)--(-1, -3.5);
      \draw[thick](-5, -4)--(-5.5, -4);
      \draw[thick](-5, -4)--(-5, -4.5);
      \draw[thick](-4, -1)--(-4.5, -1);
      \draw[thick](-3, 2)--(-2.7, 2.3);
      \draw[thick](-3, 2)--(-3.5, 2);
      \draw[dashed](4.2, 1.6)--(2.8, -2.6);
      \draw[dashed](1.2, 3.6)--(-1.2, -3.6);
      \draw[dashed](-2.8, 2.6)--(-5.2, -4.6);
      \draw[dashed](-5.4, -4.1)--(3.4, -1.9);
      \draw[dashed](-4.4, -1.1)--(4.4, 1.1);
      \draw[dashed](-3.4, 1.9)--(1.4, 3.1);
      \node at (0.5, -0.1) {$(0, 0)$};
      \node at (0.3, -1){$A$};
      \node at (-0.3, -2.7){$B$};
      \node at (1.5, -1.7){$C$};
      \draw [fill] (0,0) circle [radius=2pt];
      \draw [fill] (0,-1) circle [radius=2pt];
      \draw [fill] (-0.5, -2.5) circle [radius=2pt];
      \draw [fill] (1.5, -2) circle [radius=2pt];
  \end{tikzpicture}
  \caption{A genus two curve embedded in its Jacobian.}
  \label{fig:curve}
\end{figure}

\end{example}

\section{Transversality in toric varieties}\label{sec:transversality}

Let $\sigma\subset \R^n$ be a strongly convex rational polyhedral cone of codimension $m = n - \dim\sigma$. Then $X_\sigma$ is an affine normal toric variety with dense torus $T=(\C^*)^n$. The minimal-dimensional $T$-orbit of $X_\sigma$, denoted $O$, is isomorphic to $(\C^*)^m$. After choosing a noncanonical splitting of the ambient lattice, we may write
\[
X_\sigma\cong (\C^*)^m\times X_{\sigma'},
\]
where $\sigma'$ denotes the same cone $\sigma$, viewed as a
full-dimensional cone in $\operatorname{span}(\sigma)$. The minimal torus orbit of $X_{\sigma'}$ is a point, which we denote by $P$.

\begin{definition}\label{defn:local transversal}
    With the notation above, let $Y\subset X_\sigma$ be a locally closed analytic subvariety of pure codimension $k$. We say that $Y$ intersects $O$ \emph{transversely} if, for any point $x\in Y\cap O$, there exist an analytic neighborhood $U_x$ of $x$ in $X_\sigma$, an analytic neighborhood $U_P$ of $P$ in $X_{\sigma'}$, and an isomorphism of analytic varieties
\begin{equation}\label{eq:analytic iso}
U_x\simeq B^m\times U_P
\end{equation}
such that:
\begin{enumerate}
\item $B^m$ is a complex ball of dimension $m$;
\item under the isomorphism \eqref{eq:analytic iso}, we have
\[
Y\cap U_x\simeq B^{m-k}\times U_P
\]
for a closed $(m-k)$-dimensional complex ball $B^{m-k}\subset B^m$;
\item the product stratification induced by the torus-orbit stratification
of $X_{\sigma'}$ agrees, via \eqref{eq:analytic iso}, with the restriction of the torus-orbit
stratification of $X_\sigma$ to $U_x$.
\end{enumerate}
\end{definition}
\begin{remark}
By definition, if $Y$ intersects $O$ transversely and
$Y\cap O\neq\emptyset$, then $k\leq m$. Equivalently, if
$Y\subset X_\sigma$ is a locally closed analytic subvariety of codimension larger than $m$, then $Y$ intersects $O$ transversely if and only if $Y\cap O=\emptyset$.
\end{remark}
\begin{definition}\label{defn:global transversal}
Let $X$ be a normal toric variety with dense torus $T=(\mathbb C^*)^n$. For every torus orbit $O\subset X$, let $X_{\geq O}$ be the unique affine open toric subvariety of $X$ whose minimal orbit is $O$. Let
$Y\subset X$ be a closed analytic subvariety. We say that $Y$ intersects all torus orbits of $X$ \emph{transversely} if, for every torus orbit $O\subset X$, the subvariety $Y\cap X_{\geq O}$ intersects $O$ transversely in $X_{\geq O}$ in the sense of Definition~\ref{defn:local transversal}.
\end{definition}
\begin{remark}
When $O$ is the dense torus orbit, the above transversality condition requires that $Y\cap O$ is smooth. More generally, if $Y$ intersects all torus orbits of $X$ transversely, then $Y\cap O$ is a closed complex submanifold of $O$ for every torus orbit $O\subset X$.
\end{remark}

\begin{lemma}\label{lemma:two transversal}
Let $X$ be a smooth toric variety. Then an equidimensional closed analytic subvariety $Y\subset X$ intersects all torus orbits transversely if and only if $Y$ is smooth and intersects every torus orbit $O\subset X$ transversely in the usual sense, that is, for every $x\in Y\cap O$,
\[
T_xY+T_xO=T_xX.
\]
\end{lemma}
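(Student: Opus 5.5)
Both conditions are local on $Y$, so we may work on a single affine chart $X_{\geq O}=X_\sigma$. Since $X$ is smooth, $\sigma$ is unimodular, and after a change of lattice basis $X_\sigma\cong(\C^*)^m\times\C^{c}$ with $c=\dim\sigma$. In these coordinates the minimal orbit is $O=(\C^*)^m\times\{0\}$, and the torus-orbit stratification is the coordinate-hyperplane stratification of $\C^c$. Also $X_{\sigma'}=\C^c$ with $P=0$. Throughout, let $Y$ have pure codimension $k$ and fix $x\in Y\cap O$.

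\emph{Transverse in the sense of Definition \ref{defn:global transversal} implies smooth and transverse in the usual sense.} Take the chart $U_x\simeq B^m\times U_P$. Since $U_P$ is an open subset of $\C^c$, $Y\cap U_x\simeq B^{m-k}\times U_P$ is smooth near $x$. Condition (3) says the isomorphism sends strata to strata. Hence it sends $O\cap U_x$, the closed stratum, to $B^m\times\{P\}$. We then have
\[
T_xY+T_xO=\bigl(\C^{m-k}\oplus\C^c\bigr)+\bigl(\C^m\oplus 0\bigr)=T_xX .
\]
Every point of $Y$ lies in some orbit $O'$, and we apply the same argument in $X_{\geq O'}$. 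This shows that $Y$ is smooth and meets every orbit transversely in the usual sense.

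\emph{Smooth and transverse in the usual sense implies the local product structure.} Let $z_1,\dots,z_c$ be the coordinates on $\C^c$ and $w$ the coordinates on $(\C^*)^m$. Usual transversality $T_xY+T_xO=T_xX$ with $T_xO=\C^m\oplus0$ means that the differentials $dz_1,\dots,dz_c$ restricted to $T_xY$ are linearly independent. Since $Y$ is smooth of dimension $m-k+c$, we can complete $z_1|_Y,\dots,z_c|_Y$ to local coordinates on $Y$ near $x$ by adding $m-k$ of the $w$-coordinates, say $w_1,\dots,w_{m-k}$. Near $x$ we then choose $k$ holomorphic functions $g_1,\dots,g_k$ cutting out $Y$. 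Their differentials, restricted to the subspace $\C^m$ spanned by the $w$-directions, must be independent on $\C^m/\C^{m-k}$. This follows from a dimension count: $T_xY\cap(\C^m\oplus 0)$ has dimension $m-k$.

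We now use the coordinates $(w_1,\dots,w_{m-k},g_1,\dots,g_k,z_1,\dots,z_c)$ near $x$. By the implicit function theorem these form a local analytic coordinate system on $X$. In these coordinates:
\begin{itemize}
\item $Y=\{g=0\}$, which is $B^{m-k}\times U_P$ inside $B^m\times U_P$, where the $B^m$ factor has coordinates $(w_1,\dots,w_{m-k},g)$;
\item the $z_i$ are unchanged, so each orbit closure $\{z_i=0,\ i\in I\}$ is $B^m\times\{z_I=0\}$, and the stratification is the product one required by condition (3).
\end{itemize}
Shrinking to balls finishes this step.

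The main point needing care is this change of coordinates. We must keep the $z_i$ as coordinates so that the toric stratification is preserved exactly, not merely up to homeomorphism. This is possible precisely because usual transversality to the minimal orbit $O$ gives independence of the $dz_i|_{T_xY}$. Transversality at the smaller orbits through nearby points is then automatic, and is needed only for the converse direction.
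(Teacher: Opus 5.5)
Your proof is correct and follows the paper's argument. For the forward direction you read off $T_xY+T_xO=T_xX$ from the product chart. For the converse you keep the toric coordinates $z_1,\dots,z_{c}$ and complete them, together with local defining functions of $Y$, to a holomorphic coordinate system at $x$, so that the toric stratification is preserved exactly. The only cosmetic difference is that you take the completing functions to be $m-k$ of the torus coordinates $w_j$, whereas the paper just invokes the implicit function theorem to produce $y_{k+1},\dots,y_m$.
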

\begin{proof}
Since both conditions are local on $X$, we may assume that $X$ is a smooth affine toric variety,
\[
X=(\mathbb C^*)^m\times \mathbb C^{n-m}.
\]
It suffices to check the equivalence near a point $x\in Y\cap O$, where
\[
O=(\mathbb C^*)^m\times \{0\}\subset (\mathbb C^*)^m\times \mathbb C^{n-m}.
\]
In this case both $U_x$ and $U_P$ are smooth. Hence the condition that $Y\cap U_x$ is equal to $B^{m-k}\times U_P$ under the product decomposition $U_x\simeq B^m\times U_P$ implies that $Y$ intersects $O$ transversely in the usual sense.

Conversely, assume that $Y$ intersects $O$ transversely in the usual sense. We need to show that $Y$ intersects $O$ transversely in the sense of Definition~\ref{defn:local transversal}. Let $z_1,\ldots,z_{n-m}$ be the coordinates of $\mathbb C^{n-m}$, so that the divisors $z_i=0$ are the toric divisors of $\mathbb C^{n-m}$. We regard the $z_i$ as holomorphic functions on $X=(\mathbb C^*)^m\times \mathbb C^{n-m}$. Since $Y$ is smooth, for every point $x\in Y\cap O$, there exist local holomorphic functions $y_1,\ldots,y_k$ near $x$ such that, schematically,
\[
Y=\{y_1=\cdots=y_k=0\}
\]
near $x$. Since $Y$ intersects $O$ transversely in the usual sense, the cotangent vectors
\[
dz_1,\ldots,dz_{n-m},dy_1,\ldots,dy_k
\]
are linearly independent at $x$. Therefore, by the implicit function theorem, we can find local holomorphic functions
$y_{k+1},\ldots,y_m$ near $x$ such that
\[
y_1,\ldots,y_m,z_1,\ldots,z_{n-m}
\]
form local coordinates of $X$ at $x$. These coordinates define a product decomposition
\[
U_x\simeq B^m\times U_P.
\]
In $B^m$, choose
\[
B^{m-k}=\{y_1=\cdots=y_k=0\}.
\]
Then the two conditions in Definition~\ref{defn:local transversal} are satisfied. Therefore $Y$ intersects $O$ transversely in the sense of Definition~\ref{defn:local transversal}.
\end{proof}

\begin{lemma}\label{lemma:blowup}
Let $X$ be a normal toric variety, and let $Y\subset X$ be an irreducible closed analytic subvariety that intersects every torus orbit transversely in the sense of Definition~\ref{defn:global transversal}. Let
\[
\pi:\widetilde X\to X
\]
be a weighted blowup along a toric subvariety of $X$. Then
$\pi^{-1}(Y)$ is irreducible
and intersects every torus orbit of $\widetilde X$ transversely.
\end{lemma}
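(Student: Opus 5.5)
Both claims are local over $X$, so we reduce to a single affine chart. Let $O$ be a torus orbit of $X$ and $x\in Y\cap O$. A weighted blowup along a toric subvariety is a toric morphism induced by a refinement of the fan: each cone of $X$ is subdivided by adding a ray in its relative interior (or in a face). Over the affine chart $X_{\geq O}=X_\sigma\cong (\C^*)^m\times X_{\sigma'}$, the fan refinement only subdivides $\sigma'$, the cone $\sigma$ viewed as full-dimensional in $\operatorname{span}(\sigma)$. Hence, after the same noncanonical splitting, $\pi^{-1}(X_\sigma)\cong (\C^*)^m\times \widetilde X_{\sigma'}$, where $\widetilde X_{\sigma'}\to X_{\sigma'}$ is the corresponding toric modification. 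Under this identification, $\pi$ is $\mathrm{id}\times \pi'$.

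The key step is to pull back the local product structure of Definition~\ref{defn:local transversal}. Near $x$ we have $U_x\simeq B^m\times U_P$, with $Y\cap U_x\simeq B^{m-k}\times U_P$, and this product respects the orbit stratification. I will argue that this analytic isomorphism lifts to
\[
\pi^{-1}(U_x)\simeq B^m\times \pi'^{-1}(U_P),
\]
with $\pi^{-1}(Y)\cap \pi^{-1}(U_x)\simeq B^{m-k}\times \pi'^{-1}(U_P)$, compatibly with the orbit stratifications.

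This lifting is the main obstacle. The isomorphism $U_x\simeq B^m\times U_P$ is only analytic and need not be toric, so we must check that the blowup commutes with it. I would argue as follows. A weighted blowup of $X_{\sigma'}$ along a torus-invariant center is determined by torus-invariant data: it is $\operatorname{Proj}$ of a graded algebra of monomial ideals, i.e. ideals generated by torus-invariant functions, or equivalently a normalized blowup of a monomial ideal. By condition (3) of Definition~\ref{defn:local transversal}, the isomorphism identifies the orbit closures of $X_\sigma$ with products $B^m\times(\text{orbit closures of }X_{\sigma'})$. Because $X_{\sigma'}$ is normal, the divisorial valuations along toric divisors determine the monomial ideals as ideals defined by order conditions along these boundary divisors. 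Those ideals are therefore transported correctly, up to the identification, and the blowup, being a canonical construction from them, is transported too. To avoid subtleties I would phrase this as follows: on $U_x$, the pulled-back ideal sheaves coincide with the pullbacks along the projection to $U_P$, since both are defined by divisorial order conditions along the same strata. If this is too delicate, a fallback is to note that $\pi$ is locally, over $U_x$, the base change of $\pi'$ along the smooth projection $U_x\to U_P$, because blowups commute with flat base change.

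Given the lift, transversality follows. A torus orbit $\widetilde O$ of $\widetilde X$ lying over $X_\sigma$ is $(\C^*)^m\times O'$ for an orbit $O'$ of $\widetilde X_{\sigma'}$. For a point $\tilde x\in \pi^{-1}(Y)\cap\widetilde O$, we take the product chart $B^m\times\pi'^{-1}(U_P)$ and refine it by applying the affine toric chart of $\widetilde X_{\sigma'}$ at the image point in $O'$; this chart splits as $(\C^*)^{m'}\times X_{\tau'}$. Combining the factors gives $\tilde U\simeq (B^{m-k}\text{-compatible ball of dim } m+m')\times U_{P'}$, with $\pi^{-1}(Y)$ corresponding to $B^{m-k}\times B^{m'}\times U_{P'}$, which verifies (1)–(3). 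Irreducibility comes from the fibers: $\pi^{-1}(Y)\to Y$ is proper, and by the product structure its fibers agree locally with those of $\pi'$, which are connected. Moreover, $\pi^{-1}(Y)\cap T\cong Y\cap T$ is dense in $\pi^{-1}(Y)$, because locally $\pi^{-1}(Y)\simeq B^{m-k}\times\pi'^{-1}(U_P)$ and the torus is dense in $\pi'^{-1}(U_P)$. Since $Y$ is irreducible and transverse, $Y\cap T$ is dense in $Y$ by the same local product argument, so it is irreducible. Hence $\pi^{-1}(Y)$, the closure of an irreducible set, is irreducible.
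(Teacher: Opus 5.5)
Your proposal follows essentially the same route as the paper: lift the local chart $U_x\simeq B^m\times U_P$ through the weighted blowup to $\pi^{-1}(U_x)\simeq B^m\times \pi'^{-1}(U_P)$, which the paper simply asserts by saying that toric weighted blowups commute with products with a smooth factor. You then read off transversality from the lifted product chart, and irreducibility from the density of $Y\cap T$ in $\pi^{-1}(Y)$, as the paper does.

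Two points in your justification of the lift need adjusting. Monomial ideals such as $(x,y)\subset\C[x,y]$ are not literally cut out by order conditions along the boundary divisors. Also, your flat-base-change fallback presupposes exactly the ideal identification it is meant to replace. A clean justification is this: on the normal space $U_x$, each monomial $\chi^m$ is determined up to a unit by its divisor $\sum_i\langle m,u_i\rangle D_i$ (with $u_i$ the primitive ray generators). Hence the stratification-preserving isomorphism carries the graded system of monomial ideals defining $\pi$ to the pullback of the one defining $\pi'$.
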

\begin{proof}
The statement is local on $X$. To prove the claim near a point
$\widetilde x\in \pi^{-1}(Y)$, we may replace $X$ by the smallest affine open toric subvariety containing $x\coloneqq\pi(\widetilde x)$. Thus, we may assume that $X$ is affine and that $x$ lies in the minimal torus orbit of $X$. With the notation of Definition~\ref{defn:local transversal}, we have
\[
X=X_\sigma\simeq (\mathbb C^*)^m\times X_{\sigma'},
\]
where $X_{\sigma'}$ is an affine toric variety whose minimal torus orbit is a point $P$.

By Definition~\ref{defn:local transversal}, there exists a neighborhood $U_x$ of $x$ in $X$ such that
\[
U_x\simeq B^m\times U_P,
\]
where $U_P$ is a neighborhood of $P$ in $X_{\sigma'}$, and such that this isomorphism restricts to
\[
Y\cap U_x\simeq B^{m-k}\times U_P.
\]
Moreover, for any closed toric subvariety $Z\subset X$, the same product decomposition gives
\[
Z\cap U_x\simeq B^m\times (U_P\cap Z'),
\]
where $Z'\subset X_{\sigma'}$ is a closed toric subvariety.

Both claims now follow from the fact that toric weighted blowups commute with products with a smooth factor. More precisely, the isomorphism $U_x\simeq B^m\times U_P$ induces
\[
\Blowup_Z X\cap \pi^{-1}(U_x) \simeq \Blowup_{Z\cap U_x}U_x \simeq B^m\times \Blowup_{Z'\cap U_P}U_P,
\]
where the weighted blowups are taken with the corresponding toric weights.
Thus,
\begin{equation*}
\pi^{-1}(Y\cap U_x) \simeq B^{m-k}\times \Blowup_{Z'\cap U_P}U_P.
\end{equation*}
This shows that $\pi^{-1}(Y)$ is locally irreducible. Since $\pi$ is an isomorphism over the dense torus $T$, we regard $Y\cap T$ as an open subset of $\pi^{-1}(Y)$. The local product description shows that every neighborhood of a point of $\pi^{-1}(Y)$ meets $Y\cap T$. Hence
$Y\cap T$ is dense in $\pi^{-1}(Y)$. Since $Y\cap T$ is irreducible and $\pi^{-1}(Y)$ is locally irreducible, it follows that $\pi^{-1}(Y)$ is irreducible.

Finally, the same local product description gives the product structures
required in Definition~\ref{defn:local transversal} for the torus orbits of $\widetilde X$. Hence
$\pi^{-1}(Y)$ intersects every torus orbit of $\widetilde X$ transversely.
\end{proof}

\begin{lemma} \label{lemma:connected-blowup} 
With the notation of Lemma~\ref{lemma:blowup}, assume in addition that the intersection of $Y$ with every torus orbit of $X$ is either empty or connected. Then the intersection of $\pi^{-1}(Y)$ with every torus orbit of $\widetilde X$ is either empty or connected. 
\end{lemma}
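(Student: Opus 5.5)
We prove the claim one torus orbit of $\widetilde X$ at a time. Let $\widetilde O\subset\widetilde X$ be a torus orbit. Since $\pi$ is toric and equivariant, $\pi(\widetilde O)$ is contained in a single torus orbit $O\subset X$, and $\pi|_{\widetilde O}:\widetilde O\to O$ is a surjective homomorphism of tori up to translation. In fact it is a trivial fibration $\widetilde O\simeq O\times T'$ with connected torus fiber $T'$: the relevant cokernel of lattices is torsion-free, because $\widetilde O$ corresponds to a cone $\tau$ of the subdivided fan and $O$ to the smallest cone $\sigma$ of the original fan containing $\tau$. If $\widetilde O\subset\pi^{-1}(Y)$ meets nothing, there is nothing to prove. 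Otherwise we will show
\[
\pi^{-1}(Y)\cap\widetilde O=(\pi|_{\widetilde O})^{-1}(Y\cap O),
\]
which is a fibration over the connected space $Y\cap O$ with connected fibers, hence connected.

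The displayed identity is set-theoretically immediate, since $\pi^{-1}(Y)\cap\widetilde O=\{\tilde x\in\widetilde O:\pi(\tilde x)\in Y\}$. So the real content is to prove two things carefully: that $\pi(\widetilde O)$ lies in a single orbit $O$, and that $\widetilde O\to O$ is surjective with connected fibers.

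For these I would use the local product description from the proof of Lemma~\ref{lemma:blowup}. Near $x\in O$ we have $U_x\simeq B^m\times U_P$, and under this isomorphism the blowup becomes $B^m\times\Blowup_{Z'\cap U_P}U_P$. Torus orbits of $\widetilde X$ lying over $O$ correspond to $B^m\times(\text{orbit of }\Blowup_{Z'}X_{\sigma'}\text{ lying over }P)$. An orbit of the blowup of $X_{\sigma'}$ lying over the fixed point $P$ is a torus orbit contained in the fiber $\pi^{-1}(P)$, so it is a connected torus. Hence over $U_x$, the set $\widetilde O\cap\pi^{-1}(U_x)$ is $B^m$ times a connected torus, with $\pi$ the projection to $B^m$. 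Gluing these local descriptions, $\widetilde O\to O$ is a locally trivial fibration with connected fibers. It is surjective by equivariance, since its image is a nonempty torus-invariant subset of $O$.

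The step I expect to need care is verifying that the orbit of the weighted blowup over $P$ is a connected torus, rather than a disconnected union such as a torus times a finite group. This holds because every torus orbit of a normal toric variety is a quotient of the connected torus $T$, hence connected. Given that, $\pi^{-1}(Y)\cap\widetilde O$ is the preimage of the connected set $Y\cap O$ under a surjective open map with connected fibers (a fiber bundle), and is therefore connected.
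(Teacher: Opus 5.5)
Your proposal is correct and follows the paper's argument: $\pi$ maps each torus orbit $\widetilde O$ of $\widetilde X$ onto a torus orbit $O$ of $X$ as a trivial torus bundle with connected fibers. The preimage of the connected set $Y\cap O$ under this bundle is therefore connected. Your lattice-saturation argument, via the torsion-free quotient $N_\sigma/N_\tau$, or alternatively the local product picture, simply supplies the justification for the connected-fiber claim that the paper states without proof.
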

\begin{proof}
    Given any torus orbit $O$ of $\widetilde X$, its image $O'\coloneqq \pi(O)$ is a torus orbit of $X$. Moreover, the restriction $\pi|_{O}: O\to O'$ is a trivial affine torus bundle. In particular, $\pi|_{O}$ has connected fibers. Hence the lemma follows. 
\end{proof}
\begin{corollary}\label{cor:transversal and rationally smooth}
    Let $\Delta$ be a complete unimodular fan in $\R^n$, and let $X_\Delta$ be the associated complete toric variety with dense torus $T=(\mathbb C^*)^n$. Let $U\subset T$ be an irreducible smooth algebraic subvariety. Assume that the closure $U^\Delta$ of $U$ in $X_\Delta$ is smooth and intersects every torus orbit transversely in the usual sense. Then, for any iterated star subdivision $\widetilde\Delta$ of $\Delta$, the closure $\overline{U}^{\widetilde\Delta}$ of $U$ in $X_{\widetilde\Delta}$ intersects every torus orbit transversely. Moreover, the intersection of $\overline{U}^{\widetilde\Delta}$ with any toric subvariety of $X_{\widetilde\Delta}$ is rationally smooth.
\end{corollary}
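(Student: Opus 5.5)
We argue by induction on the number of star subdivisions, using Lemma~\ref{lemma:two transversal} to translate the hypothesis into the paper's notion of transversality and Lemma~\ref{lemma:blowup} to propagate it.

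\textbf{Step 1: the base case.} Since $\Delta$ is unimodular, $X_\Delta$ is smooth. The hypothesis says that $U^\Delta$ is smooth and meets every orbit transversely in the usual sense. By Lemma~\ref{lemma:two transversal}, $U^\Delta$ therefore intersects all torus orbits transversely in the sense of Definition~\ref{defn:global transversal}. Equidimensionality holds because $U^\Delta$ is irreducible.

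\textbf{Step 2: the inductive step.} An iterated star subdivision $\widetilde\Delta$ is obtained through a chain $\Delta=\Delta_0,\Delta_1,\dots,\Delta_r=\widetilde\Delta$. Each map $X_{\Delta_{i+1}}\to X_{\Delta_i}$ is a weighted blowup along the toric subvariety $V(\tau)$, where $\tau$ is the cone being starred. The weights are determined by the primitive vector inserted in $\tau$.

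\textbf{Step 3: identifying the preimage with the closure.} Suppose inductively that $\overline U^{\Delta_i}$ is irreducible and transverse to all orbits. Lemma~\ref{lemma:blowup} then shows that $\pi^{-1}(\overline U^{\Delta_i})$ is irreducible and transverse to all orbits of $X_{\Delta_{i+1}}$. It remains to identify this preimage with the closure $\overline U^{\Delta_{i+1}}$. The preimage is closed and contains $U$, since $\pi$ is an isomorphism over $T$. It is irreducible, and by the density argument inside the proof of Lemma~\ref{lemma:blowup}, $U=\pi^{-1}(Y)\cap T$ is dense in it. Hence it equals the closure. This closes the induction and gives the first assertion.

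\textbf{Step 4: rational smoothness.} Let $Z=V(\sigma)\subset X_{\widetilde\Delta}$ be a toric subvariety. The set $\overline U^{\widetilde\Delta}\cap Z$ is the closure of $\overline U^{\widetilde\Delta}\cap O_\sigma$ and is a union of strata $\overline U\cap O$ for $O\subset Z$. Near a point $x\in \overline U\cap O$, Definition~\ref{defn:local transversal} gives a local isomorphism
\[
\overline U\cap Z\cap U_x\;\simeq\; B^{m-k}\times (U_P\cap Z'),
\]
where $Z'$ is a closed toric subvariety of the affine simplicial toric variety $X_{\sigma'}$. A toric subvariety of a simplicial toric variety is again simplicial toric, hence a finite quotient of a smooth variety, hence rationally smooth. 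The product of a ball with a rationally smooth space is rationally smooth. Simpliciality of $\widetilde\Delta$ holds because star subdivisions of a simplicial fan remain simplicial; the subdivided cones are cones over simplices.

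\textbf{Main obstacle.} The delicate points are two. First, one must confirm that the product decomposition from Definition~\ref{defn:local transversal} is compatible with restricting to a toric subvariety $Z$, in the form $Z\cap U_x\simeq B^m\times(U_P\cap Z')$; this follows from condition (3) of that definition. Second, one must check that $\overline U\cap Z$ is genuinely the union of the strata over the orbits of $Z$, with no extra closure issues.
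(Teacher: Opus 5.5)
Your proposal is correct and follows essentially the same route as the paper. Transversality comes from Lemma~\ref{lemma:two transversal} together with Lemma~\ref{lemma:blowup} applied along the chain of star subdivisions, and rational smoothness comes from the local product decomposition $(\overline{U}^{\widetilde\Delta}\cap Z)\cap U_x\simeq B^{m-k}\times(U_P\cap Z')$ with $Z'$ simplicial, hence rationally smooth. Your extra steps, identifying $\pi^{-1}(Y)$ with the closure via the density argument and noting that star subdivisions preserve simpliciality, spell out what the paper's short proof leaves implicit.
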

\begin{proof}
    Since each star subdivision corresponds locally to a toric weighted blowup, the first statement follows from Lemmas \ref{lemma:two transversal} and \ref{lemma:blowup}. 
    
    For the second statement, denote $\overline{U}^{\widetilde\Delta}$ by $Y$ and let  $Z\subset X_{\widetilde\Delta}$ be a toric subvariety. 
    Since $Y \subset X_{\widetilde\Delta}$ intersects every torus orbit transversely,  for any $x\in Y\cap Z$, there exists a neighborhood $U_x$ of $x$ in $X_{\widetilde\Delta}$ such that
    \[
    U_x\simeq B^{m}\times U_P,
    \]
    where $U_P$ is an open neighborhood in a simplicial affine toric variety $X_{\sigma'}$. Furthermore, this  isomorphism restricts to
    \[
    Y\cap U_x\simeq B^{m-k}\times U_P
    \]
    and
    \[
    Z\cap U_x\simeq B^m\times (U_P\cap Z')
    \]
    where $Z'\subset X_{\sigma'}$ is a toric subvariety. Hence 
    \[
    (Y\cap Z)\cap U_x\simeq B_{m-k}\times (U_P\cap Z').
    \]
    Since $X_{\sigma'}$ is simplicial, the toric variety $Z'$ is also simplicial, hence rationally smooth. It follows that $B^{m-k}\times (U_P\cap Z')$ is rationally smooth. Therefore $Y\cap Z$ is rationally smooth.
\end{proof}

\section{Vanishing in the weight filtration}\label{sec:weight filtration}
In this section, we prove the following vanishing theorem. Although our main interest is the case where 
$X$ is rationally smooth and the vanishing of the top weight piece $Gr_{2d}^{W}$, we state the result in greater generality for independent interest.
\begin{proposition}\label{prop:weight2}
    Let $X$ be an irreducible complex algebraic variety of dimension $d$. Assume there exists a proper birational map $\pi: X \rightarrow Y$ where $Y$ is an affine variety. Then 
    \begin{equation}\label{eq:weight vanishing}
    Gr_{2d}^{W}H^{i}(X, \Q) = 0 \quad \text{and} \quad Gr_{2d-1}^{W}H^{i}(X, \Q) = 0 
    \end{equation}
    for any $i\neq d$.
\end{proposition}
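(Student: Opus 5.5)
We will prove \eqref{eq:weight vanishing} by splitting into the two ranges $i<d$ and $i>d$.

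\textbf{The range $i>d$.} Here the vanishing holds for all weights, and the argument is purely topological. By Artin vanishing for the affine variety $Y$, the constructible complex $R\pi_*\Q_X$ has hypercohomology
\[
H^i(X,\Q)=H^i(Y,R\pi_*\Q_X).
\]
Since $\pi$ is proper and birational, $R\pi_*\Q_X[d]$ lies in ${}^pD^{\le 0}$ but need not be perverse, so we cannot simply apply Artin vanishing to it. Instead we use the weight bounds directly.

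For any variety of dimension $d$, the classes in $H^i(X)$ have weights in $[0,\min(i,2d)]$. When $i>d$, a naive bound therefore does not kill weights $2d$ and $2d-1$. We deal with this as follows.

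Let $\rho:\widetilde X\to X$ be a resolution. Then $\pi\circ\rho$ is a proper birational map from a smooth variety $\widetilde X$ to $Y$. Since $X$ is proper over $Y$, the map $H^i(X)\to H^i(\widetilde X)$ is injective on $Gr^W_{i}$, but we need weights $2d$ and $2d-1$, which are above $i$ when $i<2d-1$. This suggests a better route.

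\textbf{The main tool: the top weights via the dualizing complex.} It is standard that
\[
Gr^W_{2d}H^i(X)\quad\text{and}\quad Gr^W_{2d-1}H^i(X)
\]
are detected on the smooth open locus: they are dual to the lowest weights of $H_c^{2d-i}$ with $\Q(d)$ twist. Precisely, the map $H^i(X)\to H^i(X_{\mathrm{reg}})$ is injective on the top two weights, or dually these pieces come from $IH$ or from the intersection complex.

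Our plan is to use the decomposition theorem for $\pi$. The complex $R\pi_*IC_X$ contains $IC_Y$ as a summand, plus summands supported on the exceptional locus. Then:
\begin{enumerate}
\item $\mathbb{H}^{i-d}(Y,IC_Y)=IH^i(Y)$ vanishes for $i>d$ by Artin vanishing, since $IC_Y$ is perverse and $Y$ is affine.
\item For $i<d$, the group $IH^i(Y)$ has weights $\le i<2d-1$ when $d\ge 2$. This follows because $Y$ is affine and $IC$ has weights $\ge$ its degree; by duality, $IH^i(Y)$ pairs with $IH^{2d-i}_c(Y)$, which vanishes for $2d-i>d$ by Artin vanishing for compact supports.
\end{enumerate}
So $IH^i(Y)=0$ for $i\ne d$ outright. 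It remains to relate the top weights of $H^i(X)$ to $IH^i(Y)$, and to show that the other summands, supported on proper closed subsets of dimension $<d$, contribute nothing in weights $\ge 2d-1$.

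\textbf{Key steps, in order.}
\begin{enumerate}
\item Show that the natural map $Gr^W_{w}H^i(X)\to Gr^W_w IH^i(X)$ is injective for $w\ge 2d-1$. The kernel of $H^i(X)\to IH^i(X)$ is built from cohomology of the singular locus and its neighborhoods. Using the triangle $j_!j^*\to\mathrm{id}\to i_*i^*$ for $\Q_X$ (or rather comparing $\Q_X\to IC_X[-d]$, whose cone is supported on $\Sigma$ with $\dim\Sigma\le d-1$), these contributions have weights $\le 2\dim\Sigma\le 2d-2$.
\item Apply the decomposition theorem: $IH^i(X)=IH^i(Y)\oplus\bigoplus_\beta \mathbb{H}(Y,IC_{Z_\beta}(L_\beta)[\cdot])$ with $\dim Z_\beta<d$. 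The summand $IH^i(Y)$ vanishes for $i\ne d$ by step (2) above.
\item Bound the weights of the remaining summands by $2\dim Z_\beta+(\text{shift})$. Using the relative hard Lefschetz symmetry, the shifts are controlled by the fiber dimension, so the resulting weights are at most $2d-2$. Alternatively, one can kill them via the semismall-type estimate $\dim Z_\beta+2\cdot(\text{fiber dim})\le$\dots
\end{enumerate}

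\textbf{Expected obstacle.} Step 3 is the main obstacle. Exceptional summands can have large shifts, so their weights need not obviously stay below $2d-1$. We would first try to prove that $Gr^W_{\ge 2d-1}$ of $\mathbb{H}(Y,K)$ vanishes for any pure complex $K$ supported on $Z$ with $\dim Z<d$. The idea is to reduce via restriction to a general point: the top weights of a pure complex supported on $Z$ come from the generic stalks, and these give weights at most $2\dim Z+1<2d-1$ only if $\dim Z\le d-2$. The codimension-one case then needs a separate argument, using that $\pi$ is birational, so exceptional divisor contributions are Tate twists already accounted for in $IH$.
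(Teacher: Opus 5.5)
Your outline has the same skeleton as the paper's proof: compare the top weights of $H^i(X)$ with those of $IH^i(X)$, apply the decomposition theorem for $\pi$, and use Artin vanishing on $Y$. However, the two Hodge-theoretic inputs that make this skeleton work are either argued incorrectly or absent.

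\textbf{Step 1.} The bound ``weights $\le 2\dim\Sigma$'' holds for the constant sheaf on a variety of dimension $\dim\Sigma$. It does not hold for the cone of $\Q_X\to IC_X[-d]$. At an isolated singular point, the stalk of this cone is $\tau_{[1,d-1]}H^*(L)$ for the link $L$, and this can have positive weight: for the cone over an abelian surface $A$, $H^1(L)\cong H^1(A)$ has weight $1$. The bound $\le 2d-2$ that you actually need is true, but the paper gets it from Saito's theorem \cite{saito4}. That theorem says $\Q^H_X[d]\to IC^H_X$ is a quasi-isomorphism on $\mathrm{Gr}^F_{-d}DR$. Hence the cone $\mathcal N$ has $\mathrm{Gr}^F_pDR(\mathcal N)=0$ for $p\le -d$, and Hodge symmetry on $\mathrm{Gr}^W$ then kills weights $\ge 2d-1$.

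\textbf{Step 3.} You leave this step open, and the route you sketch cannot work. The claim $\mathrm{Gr}^W_{\ge 2d-1}\mathbb H^*(Y,K)=0$ fails for a general pure $K$ supported in dimension $<d$. For example, the skyscraper $K=\Q^H_z(-d)[-d]$ is pure of weight $d$, like $\pi_+IC^H_X$, yet $\mathbb H^d(Y,K)=\Q(-d)$ has weight $2d$. So generic stalks do not control the top weights. What you need is a constraint specific to the summands $\cM_\ell$ of $\pi_+IC^H_X$. The paper uses Saito's result \cite{saito3} that $\mathrm{Gr}^F_pDR(\cM_\ell)=0$ for $p<-d+1$.

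\textbf{Auxiliary claims.} Several of these are false and should be removed.
\begin{enumerate}
\item For $i>d$ the groups need not vanish: $H^4(\Blowup_0\C^3)=\Q(-2)$ with $d=3$.
\item The map $H^i(X)\to H^i(X_{\mathrm{reg}})$ is not injective on top weights. For the projective cone over a plane curve, $H^4=\Q(-2)$, while $X_{\mathrm{reg}}$ retracts onto the curve.
\item For affine $Y$, $IH^i(Y)$ neither vanishes for $i<d$ nor has weights $\le i$: $H^1((\C^*)^2)=\Q(-1)^{2}$. Artin vanishing kills $IH^k_c(Y)$ for $k<d$, not for $k>d$.
\end{enumerate}
What does hold, and suffices, is that $IH^i(Y)$ has weights $\le 2i\le 2d-2$; this is the paper's lemma.

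\textbf{A simpler repair.} Your ``$2\dim$'' intuition does give a complete and much more elementary proof if you compare $X$ with $Y$ rather than with $IC_X$. Choose a proper closed $Z\subset Y$ over whose complement $\pi$ is an isomorphism, and set $E=\pi^{-1}(Z)$. Then $\dim Z\le d-1$ and $\dim E\le d-1$. The Mayer--Vietoris sequence of mixed Hodge structures for this discriminant square is
\[
H^{i-1}(E)\to H^i(Y)\to H^i(X)\oplus H^i(Z)\to H^i(E).
\]
Deligne's bound says the weights of $H^k(W)$ are at most $\min(2k,2\dim W)$. Applied to $E$ and $Z$, this gives $\mathrm{Gr}^W_wH^i(X)\cong \mathrm{Gr}^W_wH^i(Y)$ for $w\ge 2d-1$. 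Then $H^i(Y,\Q)=0$ for $i>d$ by Artin vanishing, and $H^i(Y)$ has weights $\le 2i\le 2d-2$ for $i<d$. This finishes the proof with no intersection cohomology or decomposition theorem.
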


\begin{proof}
    The proof relies on Saito's theory of mixed Hodge modules \cite{Saito2}. An overview of the main definitions and results can be found in \cite{Saito1}. First, for $\cM \in D^{b}\MHM(X)$, we define
    $$p(\cM) = \min\{p: Gr^{F}_{p}DR(\cM) \not \simeq 0\}.$$
    Since the Hodge filtration on $H^{i}(X,DR(\cM))$ is dependent on the Hodge filtration on the de Rham complex $DR(\cM)$, we have
    $$Gr^{F}_{p}H^{i}(X,DR(\cM)) = 0 \quad \text{for $p< p(\cM)$ and $i \in \Z.$}$$
    
    Note that we are using the Hodge filtration as an increasing filtration, in line with Saito's convention, rather than a decreasing filtration. 
    Thus, the $(p,q)$-graded component of the mixed Hodge structure 
    \[
    H^{i}(X,DR(\cM))\cong H^{i}(X, \rat(\cM))\otimes_\Q \C
    \]
    is trivial whenever $p<p(\cM)$ or $q<p(\cM)$. In particular, the $(p, q)$-graded component is trivial if $p+q<2p(\cM)$. With Saito’s convention that the Hodge filtration is increasing, the $(p, q)$-graded component has weight $-(p+q)$. Thus, for $j\geq - 2p(\cM)+1$ and $i \in \Z$,
    \begin{equation}\label{eq: j>-2p}
        Gr^{W}_{j}H^{i}(X, \rat(\cM)) = 0.
    \end{equation}
    
    Since $\Q^{H}_{X}[d] \in D^{b}\MHM(X)$ and $IC^{H}_{X} \in \HM_{X}(X,d)$, by \cite[Corollary~0.3]{saito4} we have
    $$p(\Q^{H}_{X}[d]) = p(IC^{H}_{X}) = -d = -\dim X,$$
    and
    $$ Gr^{F}_{-d}DR(\Q^{H}_{X}[d]) \simeq Gr^{F}_{-d}DR(IC^{H}_{X}).$$
    By \cite[Proposition~1.14]{Saito2}, we have the identity 
    \[
    Gr^{W}_{d}H^{0}(\Q^{H}_{X}[d]) =IC^{H}_{X}.
    \]
    Therefore, there exists $\mathcal{N} \in D^{b}\MHM(X)$ such that we have the following distinguished triangle in $D^{b}\MHM(X),$
\[
\mathcal{N} \longrightarrow  \Q^{H}_{X}[d] \longrightarrow  \displaystyle IC^{H}_{X} \xrightarrow{\; +1\; }.
\]
    This triangle induces a long exact sequence of mixed Hodge structures
    $$
    \cdots \rightarrow  H^{i-d}(X, \rat(\mathcal{N})) \rightarrow H^{i}(X, \Q) \rightarrow  IH^{i}(X, \Q) \rightarrow H^{i +1-d}(X, \rat(\mathcal{N})) \rightarrow \cdots,
    $$
    where the intersection cohomology groups $IH^{i}(X, \Q)=H^{i-d}(X, IC_{X}(\Q))$. Because the map 
    \[
    Gr^{F}_{-d}DR(\Q^{H}_{X}[d]) \rightarrow Gr^{F}_{-d}DR(IC^{H}_{X})
    \]
    is a quasi-isomorphism, we must have 
    \[
    Gr^{F}_{p}DR(\mathcal{N} ) \simeq 0\quad \text{for $p < -d+1$.}
    \]
    This implies
    $$Gr^{W}_{j}H^{i-d}(X, \rat(\mathcal{N})) = 0 \quad \text{for $j \geq 2d-1$ and  $i \in \Z.$}$$
    Thus by the long exact sequence, we have
    \begin{equation}\label{eq:IH iso1}
    Gr^{W}_{j}H^{i}(X, \Q) \cong  Gr^{W}_{j}IH^{i}(X, \Q),
    \end{equation}
    for $j \geq 2d-1$ and $i \in \Z$.
    Therefore, it suffices to prove that 
    \[
    Gr_{2d}^{W}IH^{i}(X, \Q) = 0 \quad \text{and} \quad Gr_{2d-1}^{W}IH^{i}(X, \Q) = 0\quad \text{for $i\neq d$.}
    \]

    Since $\pi:X \rightarrow Y$ is proper and birational, by the decomposition theorem (\cite[Theorem 1.12]{Saito2}), we have
    \[
    \pi_{+}IC^{H}_{X} \simeq IC_{X}^{Y} \oplus \bigoplus_{\ell\in \Z}\cM_\ell[-\ell],
    \]
    where each $\cM_\ell$ is a pure Hodge module whose support is a proper closed subset of $Y$. Furthermore, $\cM_\ell=0$ for all but finitely many $\ell$. 
    Taking cohomology, we have
    \begin{align*}
    IH^{i}(X, \Q) &\cong IH^{i}(Y, \Q) \oplus \bigoplus_{\ell \in \Z} H^{i-d}\big(Y, \rat(\cM_\ell[-\ell])\big)\\
    &\cong IH^{i}(Y, \Q) \oplus \bigoplus_{\ell \in \Z} H^{i-d-\ell}\big(Y, \rat(\cM_\ell)\big).
    \end{align*}
    By \cite[Proposition 2.6]{saito3}, we have 
    \[
    Gr^{F}_{p}DR(\cM_\ell) = 0 \quad \text{for $p < -d+1$ and $\ell \in \Z$.}
    \]
    Thus, 
    \[
    p(\cM_\ell)\geq -d+1\quad \text{for $\ell\in \Z$.}
    \]
    By \eqref{eq: j>-2p},
    \[
    Gr^{W}_{j}H^{i}(Y,\rat(\cM_\ell)) = 0 \quad \text{for $j\geq 2d-1$ and $i,\ell \in \Z$.}
    \]
    Therefore, 
    \begin{equation}\label{eq:IH iso2}
    Gr^{W}_{j}IH^{i}(X, \Q) \cong Gr^{W}_{j}IH^{i}(Y, \Q), \quad \text{for  $j \geq 2d-1$ and $i \in \Z$.}
    \end{equation}

    Since $Y$ is affine, by Artin's vanishing theorem (see \cite[Th\'eor\`eme~4.1.1]{BBD}), we have
    \[
    IH^{i}(Y, \Q) = 0\quad \text{for $i > d$.}
    \]
    By the following lemma, we have
    \[
    Gr^{W}_{j}IH^{i}(Y, \Q)=0 \quad\text{for $i< d$ and $j\geq 2d-1$.}
    \]
    Therefore, we have
    \[
    Gr_{2d}^{W}IH^{i}(Y, \Q) = 0 \quad \text{and} \quad Gr_{2d-1}^{W}IH^{i}(Y, \Q) = 0\quad \text{for $i\neq d$.}
    \]
    Thus, by \eqref{eq:IH iso1} and \eqref{eq:IH iso2}, we obtain the desired vanishing result \eqref{eq:weight vanishing}.
\end{proof}

\begin{lemma}
    Let $Y$ be an irreducible complex quasi-projective variety. Then,
    \[
    Gr^{W}_{j}IH^{i}(Y, \Q) = 0\quad \text{for $j>2i$.}
    \]
\end{lemma}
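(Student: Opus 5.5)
We reduce to the classical fact that the ordinary cohomology of a smooth variety satisfies $Gr^W_j H^i = 0$ for $j>2i$ (Deligne), by passing to a resolution of singularities and using the decomposition theorem to realize $IH^i(Y,\Q)$ as a direct summand of the cohomology of a smooth variety, compatibly with mixed Hodge structures.

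First, choose a resolution of singularities $f\colon \widetilde Y\to Y$, which is proper and birational with $\widetilde Y$ smooth and quasi-projective. By the decomposition theorem for mixed Hodge modules (\cite[Theorem 1.12]{Saito2}), applied to the pure Hodge module $\Q^H_{\widetilde Y}[d]=IC^H_{\widetilde Y}$, we get
\[
f_{+}\Q^H_{\widetilde Y}[d]\simeq IC^H_Y\oplus \bigoplus_{\ell}\cM_\ell[-\ell],
\]
a decomposition in $D^b\MHM(Y)$. Applying $H^{\bullet}(Y,-)$ then gives a decomposition of mixed Hodge structures in which $IH^i(Y,\Q)$ is a direct summand of $H^i(\widetilde Y,\Q)$. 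Because the splitting lives in $D^b\MHM(Y)$, both the inclusion and the projection are morphisms of mixed Hodge structures. By strictness of the weight filtration, $Gr^W_j IH^i(Y,\Q)$ is then a direct summand of $Gr^W_j H^i(\widetilde Y,\Q)$.

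It therefore suffices to show $Gr^W_jH^i(\widetilde Y,\Q)=0$ for $j>2i$, for $\widetilde Y$ smooth. We take a smooth compactification $\overline Y$ with simple normal crossing boundary $D$. The weight spectral sequence of the log de Rham complex has $E_1$-terms
\[
E_1^{-m,i+m}=H^{i-m}(D^{(m)},\Q)(-m),
\]
which are pure of weight $i+m$, where $D^{(m)}$ is the disjoint union of $m$-fold intersections of components of $D$. These terms vanish unless $i-m\ge 0$, so the weights occurring in $H^i(\widetilde Y,\Q)$ lie in $[i,2i]$. This is \cite[Deligne, Hodge II]{}-type standard material and can simply be cited.

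The main obstacle is the first step: making sure the decomposition is one of mixed Hodge structures, so that the weight bound transfers to the summand. In the smooth case this is automatic from Saito's theory, since the constant Hodge module is pure there and $f_+$ of a pure module decomposes in $D^b\MHM$. Alternatively, one can avoid choosing a splitting: the natural map $IH^i(Y)\to H^i(\widetilde Y)$ is an injective morphism of mixed Hodge structures, and weight vanishing passes to sub-MHS by strictness.
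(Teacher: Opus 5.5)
Your proposal is correct and takes the same approach as the paper: choose a resolution $\widetilde Y\to Y$, use the decomposition theorem to realize $IH^i(Y,\Q)$ as a direct summand of $H^i(\widetilde Y,\Q)$ as mixed Hodge structures, and then apply Deligne's bound $Gr^W_jH^i(\widetilde Y,\Q)=0$ for $j>2i$ (the paper cites this as \cite[Corollaire 3.2.15]{Deligne}; you also sketch its proof via the weight spectral sequence). One small caveat on your closing aside: there is in general no canonical map $IH^i(Y,\Q)\to H^i(\widetilde Y,\Q)$, so that ``alternative'' still depends on a splitting chosen from the decomposition theorem, but this does not affect your main argument.
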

\begin{proof}
    Let $\widetilde{Y}\to Y$ be a resolution of singularities. Then, by the decomposition theorem, the Hodge structure $IH^{i}(Y, \Q)$ is a direct summand of $H^i(\widetilde{Y}, \Q)$. Since $\widetilde{Y}$ is a smooth quasi-projective variety, it follows from \cite[Corollaire 3.2.15]{Deligne} that
    \[
    Gr^{W}_{j}H^{i}(\widetilde{Y}, \Q) = 0\quad \text{for $j>2i$.}
    \]
    Since $IH^{i}(Y, \Q)$ is a direct summand of $H^i(\widetilde{Y}, \Q)$ as Hodge structures, the same vanishing holds for the weight graded components of $IH^{i}(Y, \Q)$. 
\end{proof}

\section{The cohomology of the dual complex of divisors}\label{sec:dual complex}
The following theorem is a generalization of \cite[Theorem 3.1]{Hacking}, where the projective variety is smooth and the divisors have simple normal crossings. 
In the rationally smooth setting, instead of using differential forms as in loc. cit., we use constructible sheaves to deduce that the Gysin sequence computes the cohomology of a rationally smooth quasi-projective variety. For more details about the Gysin sequence, see \cite[Example 5.3]{DP}.
\begin{proposition}\label{prop:dual complex}
    Let $X$ be a rationally smooth projective variety of dimension $d$. Let $D_1, \dots, D_l$ be a collection of divisors on $X$ such that for any $I\subset \{1, \dots, l\}$, $D_I\coloneqq \bigcap_{j\in I}D_j$ is connected, rationally smooth and of codimension $|I|$ (when $|I|>d$, $D_I$ is empty). Let $K$ be the dual complex of $D=D_1\cup \cdots \cup D_l$. Denote $X\setminus D$ by $U$. Then, 
\[
\widetilde{H}_i(K; \Q)\cong \mathrm{Gr}_{2d}^W H^{2d-(i+1)}(U, \Q).
\]
\end{proposition}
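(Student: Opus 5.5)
Following the approach of \cite[Theorem 3.1]{Hacking}, we compute $H^\bullet(U,\Q)$ through a Gysin-type (Deligne) spectral sequence built from the strata $D_I$, and then read off its top weight piece. We replace differential forms by constructible sheaves. Let $j:U\to X$ be the inclusion and $i_I: D_I\to X$ the closed inclusions, with $D_\emptyset=X$. The key sheaf-level input is a resolution, in $D^b\MHM(X)$ or at least in the derived category of constructible sheaves, of $Rj_*\Q_U$ by the complexes $i_{I*}\Q_{D_I}(-|I|)[-|I|]$.

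\textbf{Step 1.} Establish the local structure. Because each $D_I$ is rationally smooth of codimension $|I|$, the local cohomology $\mathcal{H}^q_{D_I}(\Q_X)$ is concentrated in degree $q=2|I|$ and is isomorphic to $\Q_{D_I}(-|I|)$; this is purity for rationally smooth pairs. By Mayer--Vietoris along the covering $D=\bigcup D_j$, we then obtain the weight filtration on $Rj_*\Q_U$, with graded pieces
\[
\mathrm{Gr}^W_{k}\,Rj_*\Q_U\cong\bigoplus_{|I|=k} i_{I*}\Q_{D_I}(-k)[-k].
\]
This is exactly the sheaf-theoretic form of the Gysin sequence in \cite[Example 5.3]{DP}. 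We expect this step to be the main obstacle: outside the normal crossing setting one must check that the residue and Gysin maps still assemble correctly. We would verify it locally, using the product-type description of rationally smooth intersections, so that the local computation reduces to a Künneth argument for $\Q$-homology manifolds.

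\textbf{Step 2.} Use the resulting weight spectral sequence
\[
E_1^{-k,m+2k}=\bigoplus_{|I|=k}H^{m}(D_I,\Q)(-k)\Rightarrow H^{m+k}(U,\Q).
\]
Each $D_I$ is projective and rationally smooth, so by Poincaré duality for $\Q$-homology manifolds its cohomology $H^m(D_I)$ is pure of weight $m$. Hence the spectral sequence degenerates at $E_2$, as in Deligne's argument. The $E_1^{-k,m+2k}$ term has weight $m+2k$, which is at most $2\dim D_I+2k=2d$, with equality exactly when $m=2(d-k)$, i.e.\ in the top cohomology $H^{2(d-k)}(D_I)\cong\Q$. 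This uses that $D_I$ is connected (and nonempty, whenever $I$ is a simplex of $K$).

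\textbf{Step 3.} The top weight row therefore reads
\[
\cdots\to\bigoplus_{|I|=k}\Q\to\bigoplus_{|I|=k-1}\Q\to\cdots,
\]
and it contributes to $H^{2d-k}(U)$. Its differentials are the Gysin maps $H^{top}(D_I)\to H^{top}(D_{I\setminus\{j\}})$. These are signed isomorphisms, as one checks by pushing forward the fundamental class of a rationally smooth subvariety, with degree one since intersections are of expected codimension and connected. So this row is the simplicial chain complex of $K$, where $I$ with $|I|=k$ is a $(k-1)$-simplex, shifted by the $k=0$ term $\Q$ for $X$, which plays the role of the augmentation. Consequently $\mathrm{Gr}^W_{2d}H^{2d-k}(U)\cong\widetilde H_{k-1}(K;\Q)$, and setting $k=i+1$ gives the claim.
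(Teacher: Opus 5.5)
Your argument is correct, but it is the Verdier/Poincar\'e dual of the paper's route. The paper never resolves $Rj_*\Q_U$. Instead it uses the \v{C}ech-type resolution $0\to j_!\Q_U\to\Q_X\to\iota_{1*}\Q_{\widetilde D^{(1)}}\to\iota_{2*}\Q_{\widetilde D^{(2)}}\to\cdots$. This is an honest exact sequence of sheaves: on stalks it is the augmented cochain complex of a simplex, so no local cohomology computation is needed. It computes $H^\bullet_c(U,\Q)$ from $E_1^{p,q}=\bigoplus_{|I|=p}H^q(D_I,\Q)$, which is pure of weight $q$, so the spectral sequence degenerates at $E_2$. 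The weight-$0$ part is the $H^0$ row, i.e.\ the augmented cochain complex of $K$ with restriction isomorphisms $\Q\to\Q$ as differentials. The top weight of $H^\bullet(U)$ appears only at the end, via Poincar\'e duality $\mathrm{Gr}^W_{2d}H^{2d-k}(U,\Q)\cong(\mathrm{Gr}^W_0H^k_c(U,\Q))^\vee$ on the rationally smooth $U$.

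You instead apply duality stratum by stratum, through purity of local cohomology and Gysin maps. This computes $H^\bullet(U)$ and its weight filtration directly (the Gysin sequence the paper alludes to) and avoids duality on the noncompact $U$. The price is identifying $i_I^!\Q_X$ and the differentials, which the $j_!$ route gets for free.

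Two corrections to your write-up:

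\textbf{Step 1 is formal.} The step you flag as the main obstacle needs no extra work. Since $X$ and $D_I$ are rationally smooth, $\mathbb{D}\Q_X\cong\Q_X[2d](d)$ and $\mathbb{D}\Q_{D_I}\cong\Q_{D_I}[2(d-|I|)](d-|I|)$. Hence $i_I^!\Q_X\cong\mathbb{D}(i_I^*\Q_X)[-2d](-d)\cong\Q_{D_I}(-|I|)[-2|I|]$. No local product structure is required (it is not among the hypotheses of the proposition anyway), nor any K\"unneth argument. Your filtration of $Rj_*\Q_U$ is simply the Verdier dual of the paper's \v{C}ech resolution.

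\textbf{Why the top-degree Gysin maps are isomorphisms.} This is not a fundamental-class or degree-one computation, and the expected codimension plays no role. Under Poincar\'e duality on $D_I$ and $D_{I\setminus\{j\}}$, these maps become the pushforward $H_0(D_I;\Q)\to H_0(D_{I\setminus\{j\}};\Q)$. That map is an isomorphism because both spaces are connected and nonempty.
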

\begin{proof}
For $1\leq k\leq d$, let $D^{(k)}=\bigcup_{|I|=k}D_I$, and let $\widetilde{D}^{(k)}$ be the normalization of $D^{(k)}$. Denote by $\iota_k: \widetilde{D}^{(k)}\to X$ the composition of the normalization map $\widetilde{D}^{(k)}\to {D}^{(k)}$ and the closed embedding $D^{(k)}\to X$. 
    Let $j: U\to X$ be the open embedding. Then, we have a \v{C}ech-type exact sequence
    \[
    0\to j_!\Q_{U}\to \Q_X\to \iota_{1*}(\Q_{\widetilde{D}^{(1)}})\to \iota_{2*}(\Q_{\widetilde{D}^{(2)}})\to \cdots \to \iota_{d*}(\Q_{\widetilde{D}^{(d)}})\to 0.
    \]
    Then the hypercohomology spectral sequence satisfies 
    \[
    E_1^{pq}=H^q\big(X, \iota_{p*}(\Q_{\widetilde{D}^{(p)}})\big)\cong \bigoplus_{|I|=p}H^q(D_I, \Q)\Rightarrow H^{p+q}(X, j_!\Q_{U})\cong H^{p+q}_c(U, \Q),
    \]
    where we set $\widetilde{D}^{(0)}=D_\emptyset=X$, and $\iota_0: X\to X$ is the identity map. Moreover, all the arrows in the spectral sequence are maps of Hodge structures. Hence, it degenerates at the $E_2$-page (see \cite[Lemme (3.2.10)]{Deligne}). 
    Denote the $p$-th cohomology of the complex 
    \[
    0\to H^q(X, \Q)\to H^q(\widetilde{D}^{(1)}, \Q)\to \cdots \to H^q(\widetilde{D}^{(d-1)})\to H^q(\widetilde{D}^{(d)}, \Q)\to 0
    \]
    by $N^{p,q}$, where $H^q(X, \Q)$ is in degree 0. Then, the degeneration of the above spectral sequence at the $E_2$-page implies that 
    \[
    H_c^{k}(U, \Q)\cong \bigoplus_{p+q=k}N^{p,q}.
    \]
    Since $H^q(D_I, \Q)$ is a pure Hodge structure of weight $q$ and 
    \[
    H^q(\widetilde{D}^{(i)}, \Q)\cong \bigoplus_{|I|=i}H^q({D}_{I}, \Q),
    \]
    $N^{p,q}$ is a pure Hodge structure of weight $q$. Hence, 
    \[
    \mathrm{Gr}_{2d-m}^W H_c^{2d-k}(U, \Q)\cong N^{m-k, 2d-m}.
    \]
    By Poincar\'e duality, 
    \[
    H^{2d-k}(U, \Q)\cong H^{k}_c(U, \Q)^\vee.
    \]
    Since the Poincar\'e duality is compatible with mixed Hodge structures, we have
    \[
    \mathrm{Gr}_{2d-m}^W H^{2d-k}(U, \Q)\cong \big(\mathrm{Gr}_{m}^W H^{k}_c(U, \Q)\big)^\vee.
    \]
    Therefore, 
    \[
    \mathrm{Gr}_{2d}^W H^{2d-(i+1)}(U, \Q)\cong \big(\mathrm{Gr}_{0}^W H^{(i+1)}_c(U, \Q)\big)^\vee\cong \big(N^{i+1, 0}\big)^\vee.
    \]
    By definition, $\big(N^{i+1,0}\big)^\vee$ is isomorphic to the $(i+1)$-th homology of the complex
    \[
    0\to H^0(\widetilde{D}^{(d)}, \Q)^\vee\to H^0(\widetilde{D}^{(d-1)}, \Q)^\vee\to \cdots \to H^0(\widetilde{D}^{(1)}, \Q)^\vee\to H^0(X, \Q)^\vee\to 0
    \]
    where $H^0(X, \Q)^\vee$ is in degree 0. The above complex is isomorphic to 
    \begin{multline*}
    0\to \bigoplus_{|I|=d}H^0({D}_I, \Q)^\vee\to \bigoplus_{|I|=d-1} H^0({D}_I, \Q)^\vee\to \cdots \\
    \to \bigoplus_{|I|=1} H^0({D}_I, \Q)^\vee\to H^0(X, \Q)^\vee\to 0.
    \end{multline*}
    Since $H^0({D}_I, \Q)^\vee\cong \Q$ for all $I$ with $|I|\leq d$, the $(i+1)$-th homology of the above complex is equal to the $i$-th reduced homology of the dual complex $K$. 
\end{proof}
Combining Propositions \ref{prop:weight2} and \ref{prop:dual complex}, we immediately have the following corollary.
\begin{corollary}\label{cor:dual complex vanishing}
    Let $X$, $D$ and $U$ be as in Proposition \ref{prop:dual complex}. Assume further that $U$ admits a proper birational map to an affine variety. Then, the dual complex $K$ of $D$ satisfies
    \[
    \widetilde{H}_i(K; \Q)=0
    \]
    unless $i=d-1$.
\end{corollary}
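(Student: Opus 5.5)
The corollary follows by chaining Proposition \ref{prop:dual complex} with Proposition \ref{prop:weight2}.

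First, apply Proposition \ref{prop:dual complex} to $X$ and $D_1,\dots,D_l$. Its hypotheses are exactly those assumed here, since $X$, $D$ and $U$ are taken as in that proposition. This gives, for every $i$,
\[
\widetilde{H}_i(K;\Q)\cong \mathrm{Gr}^W_{2d}H^{2d-(i+1)}(U,\Q).
\]
Second, apply Proposition \ref{prop:weight2} to $U$, with $\pi\colon U\to Y$ the given proper birational map to an affine variety $Y$. To do so we must check that $U$ is an irreducible variety of dimension $d$.

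For irreducibility: $X$ is rationally smooth, so it is locally irreducible, in the sense that links are rational homology manifolds. Since $X$ is also connected (it is $D_\emptyset$, which is connected by hypothesis), $X$ is irreducible. The open subset $U=X\setminus D$ is nonempty, because each $D_j$ has codimension one. Hence $U$ is irreducible of dimension $d$.

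Proposition \ref{prop:weight2} now gives $\mathrm{Gr}^W_{2d}H^{k}(U,\Q)=0$ for all $k\neq d$. Taking $k=2d-(i+1)$, the condition $k=d$ is equivalent to $i=d-1$. So for $i\neq d-1$ the right-hand side above vanishes, and therefore $\widetilde{H}_i(K;\Q)=0$.

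The only point needing care is the irreducibility of $U$, which is a hypothesis of Proposition \ref{prop:weight2}. This rests on rational smoothness together with the connectedness of $D_\emptyset=X$. If one prefers not to use local irreducibility, one can instead note that the birational map $\pi$ forces $U$ to be irreducible whenever $Y$ is. Everything else is direct substitution of indices.
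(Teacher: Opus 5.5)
Your proposal is correct and follows the paper's proof, which obtains the corollary immediately by combining Proposition~\ref{prop:dual complex} with Proposition~\ref{prop:weight2} applied to $U$, using the same index substitution $k=2d-(i+1)$. Your explicit check that $U$ is irreducible of dimension $d$ supplies a hypothesis the paper leaves implicit. It works because $X=D_\emptyset$ is connected and rational smoothness gives $H^{2d}(X,X\setminus x;\Q)\cong\Q$, so $X$ is unibranch at every point. This is the precise form of your slightly loose phrase about links.
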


\section{Vanishing result for \texorpdfstring{$H$}{H}-regular tropical varieties}
\label{sec:localHreg}
In this section, we prove Theorem \ref{thm:local}. Given an H-regular tropical variety $\cT$, by definition it is the tropicalization of an H-regular subvariety $U\subset (\C^*)^n$. By taking closures in appropriately chosen toric varieties, we construct (partial) compactifications, 
\[
U\subset U^+\subset \overline{U},
\]
where $\overline{U}$ is projective, and $U^+$ admits a proper birational map to an affine variety. 
We then show that $L(\cT_{\leq 0})$ is homotopy equivalent to the dual complex of the divisor $\overline{U} \setminus U^+$. Theorem~\ref{thm:local} then follows from Corollary~\ref{cor:dual complex vanishing}.
The following result will help us to construct appropriate (partial) compactifications of $U$.
\begin{proposition}\label{prop:Delta plus}
  Let $\Delta$ be a complete simplicial fan in $\R^n$, and let $\ell: \R^n \to \R$ be a linear functional that is non-constant on all rays of $\Delta$.
  Let $\Sigma \subset \Delta$ be a subfan with support $\cS := |\Sigma|$, and let $\Sigma^\leq$ be the subfan of $\Sigma$ consisting of cones on which $\ell$ is non-positive.

  There exists a rational simplicial cone $\sigma_\cS \subset \{\ell > 0\} \cup \{0\}$ such that $L(\cS \setminus \sigma_\cS)$ deformation retracts onto both $L(\cS \cap \{\ell \leq 0\})$ and $L(\Sigma^\leq)$.
      Moreover, there is a simplicial refinement $\Delta'$ of $\Delta$, obtained by a sequence of stellar subdivisions, with a subfan $\Delta^+ \subset \Delta'$ such that $|\Delta^+| = \sigma_\cS$. 
\end{proposition}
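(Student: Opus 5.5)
We will take $\sigma_\cS$ to be a thin simplicial cone around a single ray on which $\ell$ is positive. The natural choice is a small simplicial neighborhood of the ray $\rho_v=\R_{\geq 0}v$, where $v$ is a rational vector with $\ell(v)>0$ and $v$ generic. We then show that removing it from $\cS$ does not change the homotopy type relative to the nonpositive part. More precisely, we will instead use the following cleaner geometric picture, which is the standard one for such statements: the link $L(\cS)$ lives on the sphere $S^{n-1}$. The function $\ell$ restricted to the sphere has a unique maximum at a point $u$ (the direction of the gradient). Choose $v$ rational and very close to $u$, and let $\sigma_\cS$ be a small rational simplicial cone containing $v$ in its interior and contained in $\{\ell>0\}\cup\{0\}$.

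\textbf{Step 1 (retraction onto the nonpositive part).} On $S^{n-1}\setminus\{v'\}$, for $v'$ the direction of $v$, radial projection away from $v'$ (geodesic flow along great circles from $v'$ to $-v'$) decreases $\ell$ monotonically once $v'$ is close to the maximum. The point is to make this flow preserve $\cS$. This is false in general, so instead we build a cellular retraction. Every cone $\tau\in\Sigma$ is simplicial, and $\ell$ is nonzero on its rays, so $\tau$ is the join of its nonpositive face $\tau^-$ and its positive face $\tau^+$. Hence $L(\tau)\cap\{\ell\le 0\}$ deformation retracts onto $L(\tau^-)$ by straight-line homotopy in join coordinates (push the weight on $\tau^+$ to zero). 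These retractions are compatible on common faces, which gives $L(\cS\cap\{\ell\le0\})\simeq L(\Sigma^\le)$. The same join argument shows that $L(\cS)$ minus any set containing a neighborhood of the link of the positive part $L(\Sigma^{>})$ retracts onto $L(\Sigma^\le)$. Here $\Sigma^{>}$ is the subfan of cones with all rays positive.

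\textbf{Step 2 (choosing $\sigma_\cS$).} With this in mind, it is more convenient to choose $\sigma_\cS$ so that $\cS\cap\sigma_\cS$ is a neighborhood of the positive cones rather than a small cone. So we take $\sigma_\cS$ to be a simplicial cone inside $\{\ell>0\}\cup\{0\}$ containing all of $|\Sigma^{>}|$, with each positive ray of $\Sigma$ in its interior. Such a cone exists: all positive rays lie in the open half-space, so they lie in the interior of a simplex on the affine slice $\{\ell=1\}$. We can take that simplex rational and large enough that its vertices stay within $\{\ell>0\}$, since the slice is an affine space. Then $L(\cS\setminus\sigma_\cS)$ is the union, over $\tau\in\Sigma$, of the parts of $L(\tau)$ outside the open star of $L(\tau^+)$ in the join structure. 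Adjusting the join retraction (push weight off $\tau^+$ until leaving $\sigma_\cS$, monotone since $\sigma_\cS$ is convex and contains $\tau^+$) gives deformation retractions of $L(\cS\setminus\sigma_\cS)$ onto $L(\Sigma^\le)$. Combined with Step 1, this also covers $L(\cS\cap\{\ell\le0\})$.

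\textbf{Step 3 (refinement).} Finally, we need $\sigma_\cS$ to become a union of cones of a refinement of $\Delta$ obtained by stellar subdivisions. This is a standard fact: for any rational polyhedral cone $\sigma$ and fan $\Delta$, some sequence of stellar subdivisions produces a fan $\Delta'$ in which $\sigma$ is a union of cones. One reference is De Concini–Procesi/Włodarczyk, via star subdivisions at rays of $\sigma$ and common refinement. We would invoke it and take $\Delta^+$ to be the cones contained in $\sigma_\cS$.

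The main obstacle is Step 2: checking that the modified join retraction is continuous and really stays in $\cS\setminus\sigma_\cS$. The difficulty is that leaving $\sigma_\cS$ along the straight-line homotopy requires checking that the exit time varies continuously. We plan to handle this with convexity of $\sigma_\cS$ and the fact that $\tau^-\cap\sigma_\cS=0$.
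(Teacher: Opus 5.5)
Once you drop the thin-cone idea, your Step 2 is the paper's argument. There, $\sigma_\cS$ is a rational simplicial cone in $\{\ell>0\}\cup\{0\}$ containing every positive ray; the paper takes the dual of a narrow simplicial cone around the gradient direction of $\ell$, where you take a cone over a simplex in $\{\ell=1\}$. On each $\tau=\tau^-*\tau^+$ one then pushes the weight on $\tau^+$ all the way to zero, and this never re-enters $\sigma_\cS$ because $\sigma_\cS$ is convex and contains $\tau^+$, so no exit-time continuity is needed.

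The paper writes the deformation retraction onto $L(\cS\cap\{\ell\le 0\})$ directly as $t\mapsto\min\{t,\,st+(1-s)m_{xy}\}$, with $m_{xy}$ the parameter where $\ell$ vanishes on the segment. Your detour through $L(\Sigma^{\le})$ only shows the inclusion is a homotopy equivalence unless you add a homotopy-extension argument. For Step 3 the paper cites Adiprasito--Pak. Your sketched justification (star subdivisions at the rays of $\sigma_\cS$ plus a common refinement) does not by itself make $\sigma_\cS$ a union of cones. The one-sided statement you need does, however, follow elementarily: for each facet hyperplane of $\sigma_\cS$, perform stellar subdivisions at the points where edges of the fan cross it, which makes that hyperplane a union of cones.
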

Our proof of Proposition~\ref{prop:Delta plus} is inspired by \cite[Proof of Lemma~4.3]{CCKR}.
\begin{proof}
  \renewcommand{\cT}{\cS} 
    \newcommand{\conv}{\operatorname{conv}}
  Let $(\;,\,)$ be the standard inner product on $\R^n$, and let $v \in \R^n$ be the unit vector such that $(v, -)$ is a positive multiple of $\ell$.
  Choose rational unit vectors $v_1, \ldots, v_n \in \R^n$ such that the cone 
  \[
  \sigma_\cS := \{ u \in \R^n : (u, v_i) \geq 0, 1 \leq i \leq n\}
  \]
  contains $v$ in its relative interior. By choosing $v_1, \ldots, v_n$ sufficiently close to $v$, we may further assume that $\sigma_\cS \setminus \{0\}$ is contained in $\{\ell > 0\}$ and that any ray of $\Delta$ that lies in $\{\ell > 0\}$ is in the relative interior of $\sigma_\cS$.

  We now show that $L(\cT \setminus \sigma_\cS)$ deformation retracts as claimed.
  Let $\cS_\leq := \cS \cap \{\ell \leq 0\}$.
  Write $\conv(A)$ for the convex hull of a set $A \subset \R^n$, and for each ray $\rho$ of $\Delta$, let $u_\rho \in \R^n$ be the unit vector generating $\rho$.
    For a cone $\sigma \in \Delta$, define
    \begin{align*}
      C^-(\sigma) &:= \conv(u_\rho : \ell(u_\rho) < 0 \text{ and $\rho$ is a ray of  $\sigma$}) \\
      C^+(\sigma) &:= \conv(u_\rho : \ell(u_\rho) > 0 \text{ and $\rho$ is a ray of $\sigma$}) \\
      C(\sigma) &:= \conv(u_\rho : \text{$\rho$ is a ray of $\sigma$})
    \end{align*}
    Observe that $L(\Delta)$ is homeomorphic to $\cup_{\sigma \in \Delta} C(\sigma)$, and that restricting this homeomorphism yields homeomorphisms of $L(\cT \setminus \sigma_\cS)$, $L(\cT_{\leq 0})$, and $L(\Sigma^\leq)$ with
    \begin{align*}
      C(\cT \setminus \sigma_\cS) &:= \cup_{\sigma \in \Sigma} (C(\sigma) \setminus \sigma_\cS), \\
      C(\cT_{\leq0}) &:= \cup_{\sigma \in \Sigma} (C(\sigma) \cap \{\ell \leq 0\}), \text{ and} \\
      C(\Sigma^\leq) &:= \cup_{\sigma \in \Sigma^\leq} C(\sigma),
    \end{align*}
    respectively. Accordingly, it suffices to show that $C(\cT\setminus \sigma_\cS)$ deformation retracts onto both $C(\cT_{\leq0})$ and $C(\Sigma^\leq)$. We will accomplish this by defining homotopies pieceswise on cones of $\Sigma$.

  Let $\sigma \in \Sigma$ be a cone not contained in $\sigma_\cT$.
  If $\sigma\subset \{\ell < 0\}$, then define $H_\sigma: C(\sigma) \times [0, 1] \to C(\sigma)$ by $H_\sigma^\cS(x,s) = x$, and set $H_\sigma^\Sigma := H_\sigma^\cS$.

  Otherwise, $\sigma$ has rays in both $\{\ell > 0\}$ and $\{\ell < 0\}$.
  Recall that the \emph{join} of $C^-(\sigma)$ and $C^+(\sigma)$ is
  \begin{align*}
    C^-(\sigma) * C^+(\sigma) &:= C^-(\sigma) \times C^+(\sigma) \times [0, 1] / \sim, \text{ where } \\
                        & (x, y, 0) \sim (x, y', 0) \text{ and } (x, y, 1) \sim (x', y, 1).
  \end{align*}
  Since $\sigma$ is simplicial, we may identify $C^-(\sigma) * C^+(\sigma)$ with $C(\sigma)$ via the homeomorphism
  \[
    h_\sigma: C^-(\sigma) * C^+(\sigma) \to C(\sigma), \quad (x,y,t) \mapsto (1-t)x + ty.
  \]
  For each $(x,y) \in C^-(\sigma) \times C^+(\sigma)$ there is a unique $m_{xy} \in (0, 1)$ such that $\ell(h_\sigma(x,y,m_{xy})) = 0$. Moreover, $m_{xy}$ is continuous in $x$ and $y$.
  Define maps
  \begin{align*}
    H_\sigma^\cS: (C(\sigma) \setminus C^+(\sigma)) \times [0, 1] &\to C(\sigma) \setminus C^+(\sigma) \\
    (h_\sigma(x,y,t),s) &\mapsto h_\sigma\left(x,y, \min\{t, st + (1-s) m_{xy}\}\right) \\
    H_\sigma^\Sigma: (C(\sigma) \setminus C^+(\sigma)) \times [0, 1] &\to C(\sigma) \setminus C^+(\sigma) \\
    (h_\sigma(x,y,t),s) &\mapsto h_\sigma\left(x,y, st\right).
  \end{align*}
  We note several properties of $H_\sigma^\cS$ and $H_\sigma^\Sigma$.
  \begin{itemize}
      \item The restrictions $H_\sigma^\cS(-, 1)$ and $H_\sigma^\Sigma(-, 1)$ are the identity.
      \item If $h_\sigma(x,y,t) \not \in \sigma_\cS$, then for all $s \in [0, 1]$, neither $H^\cS_\sigma(h_\sigma(x,y,t), s)$ nor $H^\Sigma_\sigma(h_\sigma(x,y,t),s)$ lies in $\sigma_\cS$.
    \item If $h_\sigma(x,y,t) \in \{\ell \leq 0\}$, then for all $s \in [0,1]$,  
    \[
    H^\cS_\sigma\big(h_\sigma(x,y,t),s\big) = h_\sigma(x,y,t).
    \]
    \item If $\tau$ is a face of $\sigma$ not contained in $\sigma_\cS$, then the restrictions of $H_\sigma^\cS$ and $H_\sigma^\Sigma$ to $C(\tau) \setminus C^+(\tau) \times [0,1]$ are equal to $H^\cS_\tau$ and $H_\tau^\Sigma$, respectively.
  \end{itemize}
  These properties imply that the maps $\{H_\sigma^\cS : \sigma \in \Sigma, \sigma \not \subset \sigma_\cS\}$ and $\{H_\sigma^\Sigma : \sigma \in \Sigma, \sigma \not \subset \sigma_\cS\}$ glue, and that the glued maps restrict to deformation retractions
  \[
    H^\cS, H^\Sigma: C(\cT \setminus \sigma_\cS) \times [0, 1] \to C(\cT \setminus \sigma_\cS),
  \]
  of $C(\cT \setminus \sigma_\cS)$ onto $C(\cT_{\leq 0})$ and $C(\Sigma^\leq)$, respectively.

  To complete the proof, it remains to establish the ``moreover'' statement.
  Let $\Delta''$ be any complete rational simplicial fan in which $\sigma_\cT$ is a cone.
  By \cite[Theorem 1.1]{AP24}, $\Delta$ and $\Delta''$ have a common rational simplicial refinement $\Delta'$, obtainable from both $\Delta$ and $\Delta''$ by a sequence of stellar subdivisions. We may take $\Delta^+$ to be the subfan of $\Delta'$ consisting of all cones contained in $\sigma_\cT$.
\end{proof}

Given a $d$-dimensional H-regular tropical fan $\cT$ in $\R^n$, there exists an H-regular subvariety $U\subset (\C^*)^n$ such that $\cT=\trop(U)$. By the definition of H-regular variety, there is a complete unimodular fan $\Delta$ such that the closure $\overline{U}$ of $U$ in $X_{\Delta}$ satisfies the conditions of Definition \ref{defn:H-regular1}. Let $\Sigma\subset \Delta$ be the subfan consisting of cones $C$ whose corresponding orbit $O_C$ intersects $\overline{U}$. By \cite[Proposition 3.9]{ST} and Lemma \ref{lemma:connected-blowup}, we have 
\[
|\Sigma|=\trop(U)=\cT.
\]


Let $\Delta'$ be an iterated weighted star subdivision of $\Delta$ and $\Delta^+\subset \Delta'$ be a subfan satisfying the properties of Proposition \ref{prop:Delta plus} with $\cS=\cT$. Denote by $U^+$ the closure of $U$ in $X_{\Delta^+}$. 

\begin{lemma}
    Under the above notations, $X_{\Delta^+}$ admits a proper map to an affine toric variety, which restricts to an isomorphism on the open torus $(\C^*)^n$.
\end{lemma}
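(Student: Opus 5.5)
The natural candidate for the affine target is the affine toric variety $X_{\sigma_\cT}$ attached to the single cone $\sigma_\cT=|\Delta^+|$. By Proposition~\ref{prop:Delta plus}, $\sigma_\cT$ is a strongly convex rational simplicial cone, since it lies in $\{\ell>0\}\cup\{0\}$. Hence $X_{\sigma_\cT}$ is a normal affine toric variety with dense torus $(\C^*)^n$. The plan is to show that the fan $\Delta^+$ refines the fan consisting of $\sigma_\cT$ and all its faces. The standard toric morphism criterion then gives the map, and the properness criterion gives properness.

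First I check that the identity of the lattice $\Z^n$ is compatible with the fans. Every cone of $\Delta^+$ lies in $\sigma_\cT$, so it is contained in some cone of the fan of faces of $\sigma_\cT$, namely $\sigma_\cT$ itself. By the standard criterion (e.g.\ Cox--Little--Schenck, Theorem 3.3.4), the identity of $\Z^n$ induces a toric morphism
\[
\phi\colon X_{\Delta^+}\to X_{\sigma_\cT}.
\]
This morphism is the identity on the dense torus $(\C^*)^n$, and in particular it is birational.

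Next I prove properness. By the toric properness criterion (CLS, Theorem 3.4.11), $\phi$ is proper if and only if the preimage of the support of the target fan under the lattice map equals the support of the source fan. Here that means $|\Delta^+|=\sigma_\cT$, which is exactly the ``moreover'' statement of Proposition~\ref{prop:Delta plus}.

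The only point needing care is that $\Delta^+$ is a genuine fan, i.e.\ closed under faces. This holds because it consists of all cones of the fan $\Delta'$ contained in $\sigma_\cT$, and faces of such cones are again cones of $\Delta'$ contained in $\sigma_\cT$. There is no real obstacle here. Restricting $\phi$ to the closure $U^+$ of $U$ then gives a proper map to the closure of $U$ in $X_{\sigma_\cT}$, which is affine, as needed later.
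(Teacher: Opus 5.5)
Your proposal is correct and follows the paper's proof: the identity lattice map induces a toric morphism $X_{\Delta^+}\to X_{|\Delta^+|}=X_{\sigma_{\mathcal{T}}}$, which is the identity on the dense torus and is proper by \cite[Theorem 3.4.11]{CLS11} because $|\Delta^+|=\sigma_{\mathcal{T}}$. Where the paper simply cites that theorem, you also spell out the compatibility of fans via \cite[Theorem 3.3.4]{CLS11} and the strong convexity of $\sigma_{\mathcal{T}}$.
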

\begin{proof}
  The support $|\Delta^+|$ is a convex rational polyhedral cone, with associated affine toric variety $X_{|\Delta^+|}$.
  The identity map $\R^n \to \R^n$ induces a proper toric map $X_{\Delta^+} \to X_{|\Delta^+|}$, which is an isomorphism on dense tori (see, e.g. \cite[Theorem 3.4.11]{CLS11}).
\end{proof}

\begin{corollary}\label{cor:U^+ admits map to affine}
    The partial compactification $U^+$ of $U$ admits a proper birational map to an affine variety. 
\end{corollary}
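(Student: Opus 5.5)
The plan is to compose the proper toric map of the preceding lemma with the inclusion $U^+\subset X_{\Delta^+}$ and take the image. Let $f: X_{\Delta^+}\to X_{|\Delta^+|}$ be the proper toric morphism from that lemma; it restricts to an isomorphism on the dense torus $(\C^*)^n$. Since $U^+$ is the closure of $U$ in $X_{\Delta^+}$, it is a closed subvariety, so the restriction $f|_{U^+}: U^+\to X_{|\Delta^+|}$ is proper, being a closed immersion followed by a proper map. Properness also makes the image $Y\coloneqq f(U^+)$ closed in the affine toric variety $X_{|\Delta^+|}$, so $Y$ is affine. We give it the reduced structure. Since $U$ is irreducible, so are $U^+$ and $Y$.

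For birationality, I would use that $U\subset U^+$ is open and dense. It is open because $U=U^+\cap(\C^*)^n$: the closure of $U$ in $(\C^*)^n$ is $U$ itself, since $U$ is closed there. Because $f$ is an isomorphism over $(\C^*)^n$, and $f^{-1}((\C^*)^n)=(\C^*)^n$ for a toric morphism that is the identity on lattices, $f$ maps $U$ isomorphically onto $Y\cap(\C^*)^n$. Here one checks $f(U^+)\cap(\C^*)^n=f(U^+\cap(\C^*)^n)=U$. Hence $U^+\to Y$ restricts to an isomorphism over the dense open subset $Y\cap(\C^*)^n$ of $Y$, so it is birational.

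The main point needing care is the equality $f^{-1}((\C^*)^n)=(\C^*)^n$. This holds because the cones of $\Delta^+$ map into the cone $|\Delta^+|$, and a nonzero cone of $\Delta^+$ lies in $|\Delta^+|$ with its relative interior meeting the relative interior of some nonzero face of $|\Delta^+|$. Hence nontrivial orbits map to nontrivial orbits, by the orbit–cone correspondence for toric morphisms \cite[Lemma 3.3.21]{CLS11}. Everything else is formal, so the corollary follows.
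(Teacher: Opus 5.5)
Your proposal is correct and follows essentially the same route as the paper: compose the closed embedding $U^+\hookrightarrow X_{\Delta^+}$ with the proper toric map to $X_{|\Delta^+|}$, note that the image is closed in an affine variety and hence affine, and deduce birationality from the isomorphism on dense tori. Your orbit--cone check that $f^{-1}((\C^*)^n)=(\C^*)^n$, which uses that $|\Delta^+|$ is strongly convex, makes explicit a step the paper leaves implicit.
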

\begin{proof}
    Consider the composition $U^+\to X_{\Delta^+}\to X_{|\Delta^+|}$. The first map is a closed embedding, and the second is a proper map. Thus, $U^+$ admits a proper map to an affine variety. Moreover, since $X_{\Delta^+}\to X_{|\Delta^+|}$ restricts to an isomorphism between the open tori, and since $U^+$ intersects the open torus of $X_{\Delta^+}$ nontrivially, the composition induces a birational map from $U^+$ to its image, which is an affine variety.
\end{proof}
Let $K$ be the dual complex of the divisor $\overline{U}\setminus U^+$. Then, by Corollary \ref{cor:dual complex vanishing}, we have
\begin{equation}\label{eq:vanish reduced coh of K}
    \widetilde{H}_i(K; \Q)=0
\end{equation}
unless $i=d -1$.

\begin{proposition}\label{prop:K and T}
Under the above notations, we have a homotopy equivalence
\[
K\simeq L(\cT\setminus \sigma_\cT)\simeq L(\cT_{\leq 0}),
\]
where $\sigma_\cT$ is a rational simplicial cone associated to $\cT$ as in Proposition \ref{prop:Delta plus}.
\end{proposition}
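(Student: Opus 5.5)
The second equivalence $L(\cT\setminus\sigma_\cT)\simeq L(\cT_{\leq 0})$ is already contained in Proposition~\ref{prop:Delta plus}, applied with $\cS=\cT$. So the work is to show $K\simeq L(\cT\setminus\sigma_\cT)$. The plan is to identify $K$ with the link of an explicit subfan of $\Delta'$ and then use the first part of Proposition~\ref{prop:Delta plus}, applied to $\Delta'$, to relate that link to $L(\cT\setminus\sigma_\cT)$.

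\textbf{Step 1: the strata of $\overline U$.} Let $\overline U$ now denote the closure of $U$ in $X_{\Delta'}$.
\begin{enumerate}
\item By Corollary~\ref{cor:transversal and rationally smooth}, $\overline U$ meets every torus orbit of $X_{\Delta'}$ transversely, and its intersection with every toric subvariety is rationally smooth.
\item By Lemma~\ref{lemma:connected-blowup}, each nonempty intersection $\overline U\cap O$ is connected.
\item Let $\Sigma'\subset\Delta'$ be the subfan of cones $\tau$ with $O_\tau\cap\overline U\neq\emptyset$. By \cite[Proposition 3.9]{ST}, as for $\Sigma$, we have $|\Sigma'|=\cT$.
\end{enumerate}
The boundary $\overline U\setminus U^+$ is the union of the divisors $D_\rho=\overline U\cap V(\rho)$ over rays $\rho\in\Sigma'$ with $\rho\not\subset\sigma_\cT$. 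Indeed, $U^+$ is exactly the part of $\overline U$ lying over the orbits of cones of $\Delta^+$, and $|\Delta^+|=\sigma_\cT$.

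\textbf{Step 2: the hypotheses of Proposition~\ref{prop:dual complex}.} For a set $I$ of such rays:
\begin{enumerate}
\item $D_I=\overline U\cap V(\tau_I)$ if the rays span a cone $\tau_I\in\Delta'$, and $D_I=\emptyset$ otherwise.
\item When nonempty, $D_I$ is the closure of $\overline U\cap O_{\tau_I}$, by the local product structure of transversality. It is therefore connected, rationally smooth, and of codimension $|I|$.
\end{enumerate}
Consequently $K$ is the simplicial complex whose simplices are the cones $\tau\in\Sigma'$ all of whose rays lie outside $\sigma_\cT$. This needs the cone to lie in $\Sigma'$, i.e.\ $D_I\neq\emptyset$.

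\textbf{Step 3: the main obstacle.} The delicate point is identifying the rays outside $\sigma_\cT$. I expect this to be the hardest step, because it depends on how $\Delta'$ was built relative to $\sigma_\cT$; I would first examine the construction in Proposition~\ref{prop:Delta plus}.
\begin{enumerate}
\item A ray of $\Delta'$ not contained in $\sigma_\cT$ should satisfy $\ell<0$. Rays of $\Delta$ with $\ell>0$ lie in the interior of $\sigma_\cT$. New rays produced by the stellar subdivisions either lie in $\sigma_\cT$ or lie in cones of $\Delta$ that meet $\{\ell\le 0\}$.
\item If some ray of $\Delta'$ outside $\sigma_\cT$ has $\ell\geq 0$, I would instead show directly that $K$ is homeomorphic to the full subcomplex of $L(\Sigma')$ on the rays outside $\sigma_\cT$.
\item That full subcomplex is a deformation retract of $L(\cT\setminus\sigma_\cT)$, by the same join/straight-line homotopy argument used in Proposition~\ref{prop:Delta plus}. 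Here we use that $\Delta'$ is simplicial and that each cone of $\Sigma'$ is the join of its face inside $\sigma_\cT$ and its face spanned by rays outside $\sigma_\cT$.
\end{enumerate}

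\textbf{Step 4: conclusion.} In the favourable case of Step 3(1), $K\cong L(\Sigma'^{\leq})$, where $\Sigma'^{\leq}$ is the subfan of cones of $\Sigma'$ on which $\ell\le 0$. Applying Proposition~\ref{prop:Delta plus} to $\Delta'$ and $\Sigma'$ gives $L(\Sigma'^{\leq})\simeq L(\cT\setminus\sigma_\cT)\simeq L(\cT_{\le 0})$. In the other case, Step 3(3) gives the same chain of homotopy equivalences.
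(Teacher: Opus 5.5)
Your proposal is correct and, without the case distinction, is the paper's proof. You identify $K$ with the full subcomplex of $L(\Sigma')$ on the rays outside $\sigma_\cT$ (the paper's $L(|\Sigma^c|)$). You then deformation retract $L(\cT\setminus\sigma_\cT)$ onto it, using the join decomposition of each simplex with respect to the vertices inside versus outside $\sigma_\cT$. The second equivalence comes straight from Proposition~\ref{prop:Delta plus}, as in the paper.

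You should drop the ``favourable case'' of Step~3(1). Its claim that every ray of $\Delta'$ outside $\sigma_\cT$ has $\ell<0$ does not follow from the construction, since the common stellar refinement from \cite{AP24} may introduce rays with $\ell\geq 0$ outside $\sigma_\cT$. Step~3(3) already handles every case, so nothing is lost.
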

\begin{proof}
Since our construction satisfies  Proposition \ref{prop:Delta plus}, the inclusion 
\[
L(\cT_{\leq 0})\subset L(\cT\setminus \sigma_\cT)
\]
is a homotopy equivalence. Hence, we have proved the second homotopy equivalence.

Let $\Sigma'$ be the subfan of $\Delta'$ whose support is equal to $\cT$. Then, $\Sigma'$ is a refinement of $\Sigma$. Denote by $\Sigma^c$ the subfan of $\Sigma'$ generated by all the rays not contained in $\sigma_\cT$. First, we notice that the rays of $\Sigma^c$ correspond to the irreducible components of the boundary divisor $\overline U \setminus U^+$. 
It follows from \cite[Proposition 3.9]{ST} that as simplicial complexes $K\cong L(|\Sigma^c|)$
where as the link of a simplicial fan, $L(|\Sigma^c|)$ is naturally a simplicial complex. 

Furthermore, notice that there is a strong deformation retract from $L(|\Sigma'|\setminus \sigma_\cT)$ to $L(|\Sigma^c|)$. In fact, both $L(|\Sigma'|\cap \sigma_\cT)$ and $L(\Sigma^c)$ are simplicial subcomplexes of $L(|\Sigma'|)$, and they are generated by vertices that form a partition of the vertices of $L(\Delta)$. Given any simplex $\sigma$ and a face $\tau$ of $\sigma$, there is a standard linear retract of $\sigma$ to $\tau$. Using such a retract, we can construct a deformation retract from $L(|\Sigma'|)\setminus L(|\Sigma'|\cap \sigma_\cT)=L(|\Sigma'|\setminus \sigma_\cT)$ to $L(\Sigma^c)$. Therefore, we have
\[
K\simeq L(|\Sigma^c|)\simeq L(|\Sigma'|\setminus \sigma_\cT)=L(\cT\setminus \sigma_\cT).
\]
Thus, the first homotopy equivalence follows. 
\end{proof}
Finally, we can deduce Theorem ~\ref{thm:local}. 
\begin{proof}[Proof of Theorem ~\ref{thm:local}]
    The desired statement follows from Proposition \ref{prop:K and T} and the vanishing property \eqref{eq:vanish reduced coh of K}.
\end{proof}

\section{Morse theory arguments}\label{sec:morseargument}
We now prove Theorem \ref{thm:abelian variety} via an argument similar to the one in \cite{LMW} (see also \cite{EGM}). 

Let $\cX$ be a $d$-dimensional tropical subvariety of an $n$-dimensional tropical abelian variety $\cA$. For every vertex $v$ in $\cX$, we say that a linear function $\ell$ defined on some neighborhood of $v$ is {\em Morse} if it is not constant on any $1$-simplex containing $v$. Given any Morse linear function $\ell$ near a vertex $v$, let
$L_v$ be the link of $\cX$ at $v$, and let 
\[
L^{\leq}_{\ell, v}=L_v\cap \{x\in U_v\mid \ell(x)\leq \ell(v)\},
\]
where $U_v$ is a neighborhood of $v$ which contains $L_v$ and on which $\ell$ is defined. Theorem \ref{thm:abelian variety} will be deduced from the following theorem. 
\begin{theorem}\label{thm:top} 
Under the above notations, suppose that for all vertices $v$ of $\cX$, and any local Morse linear function $\ell$ near $v$, 
\begin{equation}\label{eq_assumption}
\widetilde{H}^i\big(L^{\leq}_{\ell, v}; \Q\big)=0\quad \text{for all} \;\; i\neq d-1,
\end{equation}
or more generally, 
\begin{equation*}
(-1)^{d-1}\big(\chi(L^{\leq}_{\ell, v})-1\big)\geq 0.
\end{equation*}
Then we have
    \[
    (-1)^d\chi(\cX)\geq 0.
    \]
\end{theorem}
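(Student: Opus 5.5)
The plan is to run a piecewise-linear Morse argument using a global "height function" on $\cA$. Since $\cA$ is compact there is no globally defined linear function, so the natural substitute is the squared distance function from a general point. Concretely, choose a general point $p\in\R^n$ and consider
\[
f(x)=\min_{\lambda\in\Z^n} Q(x-p-\lambda),
\]
where $Q$ is the polarization. This $f$ is not piecewise linear, and its critical behaviour on the cells of $\cX$ is awkward. Following the approach of \cite{LMW}, I would instead average over $p$.

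Fix a sufficiently fine polyhedral subdivision of $\cX$. For a general $p$, the restriction of $f$ to each open cell $F$ of $\cX$ is a smooth strictly convex function. Its critical points on $F$ (as a stratified Morse function on $\cX$) are either
\begin{enumerate}
\item points in the relative interior of cells where $p$ projects orthogonally, or
\item vertices.
\end{enumerate}
Alternatively, I would avoid the distance function altogether. The cleaner route is a combinatorial Euler characteristic formula: for each vertex $v$ and a generic linear function $\ell$ (a fixed generic covector on $\R^n$, which is well defined on the torus as a local linear function), I would use the stratified Morse identity.

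Take a generic constant-slope covector $\ell$. It descends to a multivalued function on $\cA$, i.e.\ a closed $1$-form $\omega=d\ell$. Closed $1$-form Morse theory (Novikov type, as in \cite{LMW}) gives
\[
\chi(\cX)=\sum_{\text{critical points }v}(-1)^{\mathrm{ind}}\ldots.
\]
More directly, I would work on a finite cover or on the $\Z^n$-periodic lift $\widetilde\cX\subset\R^n$. Here $\ell$ is an honest linear function, generic so that it is nonconstant on every edge (a Morse function at every vertex). For each vertex $v$, the local change in Euler characteristic of the sublevel set when passing $v$ is $1-\chi(L^{\leq}_{\ell,v})$, since the sublevel set changes by attaching the cone over $L^{\leq}_{\ell,v}$ along $L^{\leq}_{\ell,v}$. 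Edges and cells have no interior critical points for a linear function. Additivity of the Euler characteristic over a fundamental domain, with each vertex of $\cX$ counted once, gives
\[
\chi(\cX)=\sum_{v\in \cX^{(0)}}\big(1-\chi(L^{\leq}_{\ell,v})\big).
\]
To justify this without a global sublevel filtration, I would use the local formula for Euler characteristic of a compact polyhedral complex. Writing $\chi(\cX)=\sum_F(-1)^{\dim F}$ over open cells, I would assign each cell $F$ to its unique $\ell$-maximal vertex $v$, which exists since $\ell$ is generic and nonconstant on edges; note that $\ell$ is linear on cells of the lift, and this is well defined on the quotient. The cells with top vertex $v$ are $v$ itself together with cells through $v$ lying in $\{\ell\le\ell(v)\}$, and these correspond to cells of $L^{\le}_{\ell,v}$ of one lower dimension. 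Hence their signed count is $1-\chi(L^{\le}_{\ell,v})$. Summing over $v$ gives the displayed formula, and this avoids any global Morse theory.

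Given the formula, under the hypothesis each term satisfies
\[
(-1)^d\big(1-\chi(L^{\le}_{\ell,v})\big)=(-1)^{d-1}\big(\chi(L^{\le}_{\ell,v})-1\big)\ge0,
\]
so summing gives $(-1)^d\chi(\cX)\ge 0$. The vanishing hypothesis \eqref{eq_assumption} implies the more general one, since then $\chi(L^{\le})-1=(-1)^{d-1}\dim\widetilde H^{d-1}$.

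The main obstacle is the cell-assignment step. One must subdivide $\cX$ so that each cell is a simplex (or at least a polytope) on which $\ell$ has a unique maximal vertex. One must also check that the cells through $v$ in $\{\ell\le\ell(v)\}$ biject with the cells of a polyhedral model of $L^{\le}_{\ell,v}$. For this, $L^{\le}_{\ell,v}$ should be computed inside a small link sphere, where each such cell $F$ meets it in a cell of dimension $\dim F-1$. Genericity of $\ell$ with respect to the finitely many edge directions in the compact $\cX$ handles the rest. A further subtlety is that the cells meeting the link in $\{\ell\le\ell(v)\}$ but containing points with $\ell>\ell(v)$ do not occur, because $v$ is the maximum on those cells; cells with $v$ not maximal are cut by the hyperplane, and their link pieces cancel in the Euler count by the same assignment.
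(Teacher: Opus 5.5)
Your final argument, the cell count (once the distance-function and Novikov detours are set aside), is correct, and it takes a genuinely different route from the paper.

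The paper chooses an \emph{integral} generic covector, so that $\ell$ descends to a Lie group map $f_\cA\colon \cA\to S^1$ taking distinct values on the vertices. It then applies the circle-valued Morse lemma (Lemma~\ref{lemma_chi}) to write $\chi(\cX)$ as a sum of relative Euler characteristics $\chi(\cX_{[c-\epsilon,c+\epsilon]},\cX_{c-\epsilon})$. Each of these is identified with $1-\chi(L^{\leq}_{\ell,v})$ by excision in a small ball around the unique vertex over $c$.

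You reach the same identity $\chi(\cX)=\sum_v\bigl(1-\chi(L^{\leq}_{\ell,v})\bigr)$ by a Banchoff-type count. You write $\chi(\cX)=\sum_F(-1)^{\dim F}$ and assign each closed cell to its unique $\ell$-maximal vertex. This vertex is well defined on $\cA$, because two lifts of a cell differ by a lattice translation, which changes $\ell$ only by a constant.

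Your route is more elementary and needs less: any generic real covector works, with no integrality, no separation of critical values, and no fibre-bundle or excision arguments. The paper's route computes the local pairs at the level of (co)homology rather than only Euler characteristics. That same local computation is reused in the proof of Theorem~\ref{thm:Lefschetz}, which needs $H_k(\cX_{[a,b]},\cX_{[a,b-\epsilon]};\Q)=0$ for $k\neq d$; a cell count cannot give that.

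One step needs to be stated carefully, because your first formulation of it is false. The cells with top vertex $v$ correspond to the subcomplex $A_v\subset L_v$ of link cells $G_F$ coming from faces $F\ni v$ on which $\ell$ is maximized at $v$. They do not correspond to the cells of $L^{\leq}_{\ell,v}$, which also contains the lower parts of link cells cut by $\{\ell=\ell(v)\}$. Your sentence saying such cells ``do not occur'' contradicts the clause after it.

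What you need is $\chi(L^{\leq}_{\ell,v})=\chi(A_v)$. Give $L_v$ one cell $G_F$ of dimension $\dim F-1$ for each face $F\ni v$, for instance by radially projecting an affine slice of the tangent cone of $F$ at $v$. Radial projection preserves the sign of $\ell-\ell(v)$, and no vertex of a slice has $\ell=\ell(v)$. The open cells of $L^{\leq}_{\ell,v}$ are then:
\begin{itemize}
\item $\operatorname{relint}G_F$ for each $G_F\in A_v$;
\item for each cut cell, the pair $\operatorname{relint}G_F\cap\{\ell<\ell(v)\}$ and $\operatorname{relint}G_F\cap\{\ell=\ell(v)\}$, of dimensions $\dim G_F$ and $\dim G_F-1$, which therefore cancel.
\end{itemize}
Link cells of faces on which $v$ is the minimum miss $L^{\leq}_{\ell,v}$ entirely.

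No subdivision is needed, since the cells are already polytopes. If you do subdivide, each new vertex lies in the relative interior of a face $E$ of positive dimension. There the punctured lower half $\{x\neq 0:\ell(x)\leq 0\}$ of the local cone $\cX-v$ is star-shaped about any $u$ parallel to $E$ with $\ell(u)<0$. So each new vertex contributes $1-\chi(L^{\leq})=0$, and the hypothesis is needed only at the original vertices.
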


\begin{remark}
When $d=1$, the assumption of Theorem \ref{thm:top} is trivially true, and the conclusion is valid as well because
if $G$ is a finite graph with positive number of edges and without leaf, then $\chi(G)\leq 0$.
\end{remark}


Let $f: X\to S^1=\R/\Z$ be a circle-valued function from a topological space $X$. Given any $a\in \R$ and interval $[a, b]\subset \R$ with $a\leq b< a+1$, by abusing notation, we also use them to denote their images in $S^1$, and we denote $f^{-1}(a)$ and $f^{-1}([a, b])$ by $X_a$ and $X_{[a, b]}$, respectively. The following is a basic fact in circle-valued Morse theory. We include the proof for the reader's convenience. 
\begin{lemma}\label{lemma_chi}
    Let $X$ be a finite CW complex. Let $f: X\to S^1=\R/\Z$ be a circle-valued function such that the preimage of any point or interval has finite-dimensional cohomology over $\Q$. Assume that away from a finite set $W\subset S^1$, $f$ is a fiber bundle. 
    Then,
    \[
    \chi(X)=\sum_{c\in W}\chi\big(X_{[c-\epsilon, c+\epsilon]}, X_{c-\epsilon}\big)
    \]
    for sufficiently small $\epsilon>0$, where the relative Euler characteristic is defined by
    \[
    \chi\big(X_{[c-\epsilon, c+\epsilon]}, X_{c-\epsilon}\big)=\sum_{i\geq 0}(-1)^i\dim H^i\big(X_{[c-\epsilon, c+\epsilon]}, X_{c-\epsilon}; \Q\big).
    \]
\end{lemma}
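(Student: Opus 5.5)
We cut the circle at a point $a\notin W$ and reduce to ordinary Morse-type bookkeeping on the interval $[a,a+1]$. Order the critical values as $a<c_1<\dots<c_r<a+1$. Choose $\epsilon>0$ so small that the intervals $[c_j-\epsilon,c_j+\epsilon]$ are pairwise disjoint and avoid $a$. The complementary closed intervals $I_0=[a,c_1-\epsilon]$, $I_j=[c_j+\epsilon,c_{j+1}-\epsilon]$ for $1\le j<r$, and $I_r=[c_r+\epsilon,a+1]$ contain no points of $W$. Over each such interval $f$ is a fiber bundle, which we take to mean locally trivial, so its restriction over a contractible interval is trivial. Hence $X_{I_j}$ deformation retracts onto either endpoint fiber. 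In particular the inclusion of each endpoint fiber is a cohomology isomorphism, and $\chi(X_{I_j})=\chi(X_b)$ for any $b\in I_j$.

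Now set $Y=f^{-1}\big([a,a+1]\big)$, where this is the space obtained by cutting $X$ along $X_a$. Then $X$ is recovered from $Y$ by gluing its two boundary copies of $X_a$. By the Mayer--Vietoris sequence, using the finite-dimensionality hypotheses and small open thickenings of the fibers (or a collar of $X_a$ coming from the local triviality near $a$), we get $\chi(X)=\chi(Y)-\chi(X_a)$. Similarly $Y$ is the union of the pieces $X_{I_j}$ and $X_{[c_j-\epsilon,c_j+\epsilon]}$, which meet along the regular fibers $X_{c_j\pm\epsilon}$. Additivity of $\chi$ under these gluings gives
\[
\chi(Y)=\sum_{j=0}^{r}\chi(X_{I_j})+\sum_{j=1}^r\chi\big(X_{[c_j-\epsilon,c_j+\epsilon]}\big)-\sum_{j=1}^r\big(\chi(X_{c_j-\epsilon})+\chi(X_{c_j+\epsilon})\big).
\]

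Next we use the long exact sequence of the pair, which gives $\chi\big(X_{[c_j-\epsilon,c_j+\epsilon]},X_{c_j-\epsilon}\big)=\chi\big(X_{[c_j-\epsilon,c_j+\epsilon]}\big)-\chi(X_{c_j-\epsilon})$. Substituting this and the equality $\chi(X_{I_j})=\chi(\text{endpoint fiber})$, the sum above becomes
\[
\chi(Y)=\sum_j \chi\big(X_{[c_j-\epsilon,c_j+\epsilon]},X_{c_j-\epsilon}\big)+\sum_{j=0}^r\chi(X_{I_j})-\sum_{j=1}^r\chi(X_{c_j+\epsilon}).
\]
The fiber $X_{c_j+\epsilon}$ is the left endpoint fiber of $I_j$, so each $\chi(X_{c_j+\epsilon})$ cancels against $\chi(X_{I_j})$ for $j=1,\dots,r$. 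Only $\chi(X_{I_0})=\chi(X_a)$ survives. Combining with $\chi(X)=\chi(Y)-\chi(X_a)$ yields the stated formula. If $W$ is empty, the same argument shows $\chi(X)=0$, consistent with an empty sum.

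The main obstacle is justifying the Mayer--Vietoris additivity along the fibers, since $X$ is only a CW complex and $f$ is merely continuous. The plan is to use local triviality near the regular values $a$ and $c_j\pm\epsilon$. This gives open collars $X_{(b-\delta,b+\delta)}\cong X_b\times(b-\delta,b+\delta)$ that deformation retract onto $X_b$. The finiteness hypotheses then ensure that all terms in the exact sequences are finite-dimensional, so the Euler characteristics are defined and additive.
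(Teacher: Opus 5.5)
Your proof is correct and follows essentially the paper's route. Both arguments cut the circle at a regular value and decompose along regular fibers into pieces each containing one point of $W$, with the bundle pieces contributing nothing. The only difference is bookkeeping: you use absolute Euler characteristics with Mayer--Vietoris and telescoping cancellation, whereas the paper uses relative Euler characteristics with excision and the long exact sequence of triples.
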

\begin{proof}
If $W=\emptyset$, then $X$ is a fiber bundle over $S^1$. The Euler characteristic of a fiber bundle is equal to the product of the Euler characteristics of the fiber and the base. Hence, $\chi(X)=0$, and the desired equation holds. For the rest of the proof, we assume that $W\neq \emptyset$. 

    Fix $b\in S^1\setminus W$, and sufficiently small $\epsilon>0$. Then, by the long exact sequence of relative cohomology, we have
    \begin{equation}\label{eq_fb1}
    \chi(X)=\chi\big(X, X_{[b-\epsilon, b+\epsilon]}\big)+\chi\big(X_{[b-\epsilon, b+\epsilon]}\big).
    \end{equation}
    By excision and the long exact sequence of relative cohomology, we have 
    \begin{equation}\label{eq_fb2}
    \begin{aligned}
    \chi\big(X, X_{[b-\epsilon, b+\epsilon]}\big)&=\chi\big(X_{[b+\epsilon, b+1-\epsilon]}, X_{b+\epsilon}\cup X_{b+1-\epsilon}\big)\\
    &=\chi\big(X_{[b+\epsilon, b+1-\epsilon]}, X_{b+\epsilon}\big)-\chi\big(X_{b+1-\epsilon}\big).
    \end{aligned}
    \end{equation}
Since $f$ is a fiber bundle over $[b-\epsilon, b+\epsilon]$, we have
\[
X_{[b-\epsilon, b+\epsilon]}\simeq X_{b-\epsilon}=X_{b+1-\epsilon}. 
\]
Thus, combining equations \eqref{eq_fb1} and \eqref{eq_fb2}, we have
\begin{equation}\label{eq_fb3}
\chi(X)=\chi\big(X_{[b+\epsilon, b+1-\epsilon]}, X_{b+\epsilon}\big).
\end{equation}
    
    By excision and the long exact sequences of relative cohomology, for any $a_1, a_2, a_3$ with $a_1<a_2<a_3<a_1+1$, we have
\begin{equation*}
\begin{aligned}
\chi\big(X_{[a_1, a_3]}, X_{a_1}\big)&=\chi\big(X_{[a_1, a_3]}, X_{[a_1, a_2]}\big)+\chi\big(X_{[a_1, a_2]}, X_{a_1}\big) \\
  &= \chi\big(X_{[a_2, a_3]}, X_{a_2}\big)+\chi\big(X_{[a_1, a_2]}, X_{a_1}\big).
\end{aligned}
\end{equation*}
Therefore, breaking $[b+\epsilon, b+1-\epsilon]$ into smaller closed intervals containing exactly one point of $W$ each, and using the fact that $f$ is a fiber bundle away from $W$, we have
\begin{equation}\label{eq_fb4}
\chi\big(X_{[b+\epsilon, b+1-\epsilon]}, X_{b+\epsilon}\big)=\sum_{c\in W}\chi\big(X_{[c-\epsilon, c+\epsilon]}, X_{c-\epsilon}\big).
\end{equation}
Now, the desired equation follows from \eqref{eq_fb3} and \eqref{eq_fb4}.
\end{proof}


\begin{proof}[Proof of Theorem \ref{thm:top}]
Since the statement is topological, we may identify the tropical abelian variety $\cA$ with $\mathbb{R}^n/\mathbb{Z}^n$. Given a $d$-dimensional tropical cycle $\cX$ of $\cA$, we may choose a general real Lie group map $f_\cA:\cA\to S^1=\R/\Z$ such that all vertices of $\cX$ have distinct images. In particular, any local lift of $f_\cA$ is a Morse function. Denote the restriction by $f=f_\cA|_{\cX}: \cX\to S^1$. 

Let $V\subset \cX$ be the set of vertices, and let $W=f(V)$. Then, $f$ is obviously a fiber bundle over $S^1\setminus W$. Given $c\in W$, we consider the pair $(\cX_{[c-\epsilon, c+\epsilon]}, \cX_{c-\epsilon})$ for sufficiently small $\epsilon>0$. 
By our assumption, there is a unique $v\in V$ such that $f(v)\in [c-\epsilon, c+\epsilon]$. Notice that for $1\gg \delta\gg \epsilon>0$, the inclusions
\[
\cX_{c-\epsilon}\subset \big(\cX_{[c-\epsilon, c+\epsilon]}\setminus B^\circ_\delta(v)\big)\cup \cX_{c-\epsilon}
\]
and
\[
\cX_{[c-\epsilon, c+\epsilon]}\cap B_\delta(v)\subset \cX\cap B_\delta(v)
\]
are both deformation retracts, where $B_\delta(v)\subset \cA$ is the ball centered at $v$ with radius $\delta$ and $B_\delta^\circ(v)$ is its interior. Therefore, using excision we can deduce that for any $i\geq 0$,
\begin{align*}
    &H^i\big(\cX_{[c-\epsilon, c+\epsilon]}, \cX_{c-\epsilon};\Q\big)\\
    \cong & H^i\big(\cX_{[c-\epsilon, c+\epsilon]}, \big(\cX_{[c-\epsilon, c+\epsilon]}\setminus B_\delta^\circ(v)\big)\cup \cX_{c-\epsilon};\Q\big)\\
    \cong &H^i\big(\cX_{[c-\epsilon, c+\epsilon]}\cap B_\delta(v), \big(\cX_{[c-\epsilon, c+\epsilon]}\big)\cap \partial B_\delta(v)\big)\cup \big(\cX_{c-\epsilon}\cap B_\delta(v)\big);\Q\big)\\
    \cong &H^i\big(\cX\cap B_\delta(v), \cX_{[c-0.5, c+\epsilon]}\cap \partial B_\delta(v); \Q\big)
\end{align*}
where $\partial B_\delta(v)$ is the boundary of $B_\delta(v)$. Therefore, by the long exact sequence of relative cohomology,
\begin{align*}
\chi\big(\cX_{[c-\epsilon, c+\epsilon]}, \cX_{c-\epsilon}\big)&=\chi\big(\cX\cap B_\delta(v), \cX_{[c-0.5, c+\epsilon]}\cap \partial B_\delta(v)\big)\\
&=\chi(\cX\cap B_\delta(v))-\chi\big(\cX_{[c-0.5, c+\epsilon]}\cap \partial B_\delta(v)\big).
\end{align*}
Notice that $\cX\cap B_\delta(v)$ is contractible and $\cX_{[c-0.5, c+\epsilon]}\cap \partial B_\delta(v)$ is homotopy equivalent to $\cX_{[c-0.5, c]}\cap \partial B_\delta(v)$, which can be regarded as the half link $L^{\leq}_{\ell, v}$. Therefore, 
\[
\chi\big(\cX_{[c-\epsilon, c+\epsilon]}, \cX_{c-\epsilon}\big)=1-\chi(L^{\leq}_{\ell, v}).
\]
Thus, by \eqref{eq_assumption}, we have
\[
(-1)^d\chi\big(\cX_{[c-\epsilon, c+\epsilon]}, \cX_{c-\epsilon}\big)\geq 0
\]
for any $c\in W$. Now, we can apply Lemma \ref{lemma_chi} and conclude that
\[
(-1)^{d}\chi(\cX)\geq 0. \qedhere
\]
\end{proof}

\begin{proof}[Proof of Theorem \ref{thm:abelian variety}]
    By Theorem \ref{thm:local}, the vanishing assumption \eqref{eq_assumption} in Theorem \ref{thm:top} holds for H-regular tropical varieties. Therefore, by Theorem \ref{thm:top}, we conclude that
    \[
    (-1)^d\chi(\cX)\geq 0
    \]
    for any $d$-dimensional H-regular tropical subvariety of a tropical abelian variety.
\end{proof}
Theorem \ref{thm:Lefschetz} can be proved using similar arguments.
\begin{proof}[Proof of Theorem \ref{thm:Lefschetz}]
    Let $\ell: \R^n\to \R$ be a linear function such that the restriction of $\ell$ to any one-dimensional face of $\cX$ is non-constant, and assume that $H=\{\ell=c\}$ for some constant $c\in \R$. For any $t\in \R$ and any closed interval $[a, b]$, set
    \[
    \mathcal X_t=\mathcal X\cap \{\ell=t\}\quad \text{and}\quad\mathcal X_{[a,b]}=\mathcal X\cap \{a\leq \ell\leq b\}. 
    \]

    Assume that $\{\ell=b\}$ contains exactly one vertex $v$ of $\cX$. Denote the link of $\cX$ at $v$ by $L_v$ and let 
\[
L^{\leq}_{\ell, v}=L_v\cap \{x\in U_v\mid \ell(x)\leq \ell(v)\}.
\]
Then, for any $a<b$ and $1\gg \delta\gg \epsilon>0$, we have
\begin{align*}
H_k(\cX_{[a,\, b]}, \cX_{[a,\, b-\epsilon]};\Q)&\cong H_k(\cX_{[a,\, b]}\cap B_\delta(v), \cX_{[a,\, b-\epsilon]}\cap B_\delta(v);\Q)\\
&\cong H_k(\cX\cap B_\delta(v), \cX_{[a,\, b-\epsilon]}\cap \partial B_\delta(v);\Q)\\
&\cong \widetilde{H}_{k-1}(\cX_{[a,\, b-\epsilon]}\cap \partial B_\delta(v); \Q)\\
&\cong \widetilde{H}_{k-1}(L^{\leq}_{\ell, v_i}; \Q),
\end{align*}
where the first isomorphism follows from the same arguments as the proof of Theorem \ref{thm:top}, the third isomorphism follows from $\cX\cap B_\delta(v)$ being contractible. 

In general, if $\{\ell=b\}$ contains more than one vertex of $\cX$, then the above formula can be replaced by
\[
H_k(\cX_{[a,\, b]}, \cX_{[a,\, b-\epsilon]};\Q)\cong \bigoplus_{i}\widetilde{H}_{k-1}(L^{\leq}_{\ell, v_i}; \Q),
\]
where the sum is over all vertices $v_i$ of $\cX$ in $\{\ell=b\}$. 

In any case, it follows from the H-regular assumption of $\cX$ and Theorem~\ref{thm:local} that 
\[
\widetilde{H}_{k-1}(L^{\leq}_{\ell, v_i}; \Q)=0 \quad \text{for all }k\neq d.
\]
Hence for any $a<b$ and sufficiently small $\epsilon>0$, 
\[
H_k(\cX_{[a,\, b]}, \cX_{[a,\, b-\epsilon]};\Q)=0\quad \text{for all }k\neq d.
\]

Now, let $a_1, \dots, a_\mu, b_1, \dots, b_\nu$ be the values of $\ell$ on the set of vertices of $\cX$ that are different from $c$ such that 
\[
a_\mu<\cdots a_1<c<b_1<\cdots <b_\nu.
\]
Then, for sufficiently small $\epsilon>0$, the inclusion $\cX_{c}\subset \cX_{[c,\, b_1-\epsilon]}$ is a homotopy equivalence. In particular,
\[
H_k(\cX_{[c,\, b_{1}]}, \cX_{c};\Q)\cong H_k(\cX_{[c,\, b_{1}]}, \cX_{[c,\, b_{1}-\epsilon]};\Q)
\]
which vanishes except when $k=d$.

Similarly, for any $1\leq i\leq \nu-1$, the inclusion $\cX_{[c,\, b_i]}\subset \cX_{[c,\, b_{i+1}-\epsilon]}$ is also a homotopy equivalence. Hence,
\[
H_k(\cX_{[c,\, b_{i+1}]}, \cX_{[c,\, b_{i}]};\Q)\cong H_k(\cX_{[c,\, b_{i+1}]}, \cX_{[c,\, b_{i+1}-\epsilon]};\Q)
\]
which also vanishes except when $k=d$.

Consider the long exact sequence on relative homology groups
\begin{multline*}
    \cdots \to H_k(\cX_{[c,\, b_{i}]}, \cX_{c};\Q)\to H_k(\cX_{[c,\, b_{i+1}]}, \cX_{c};\Q)\\
    \to H_k(\cX_{[c,\, b_{i+1}]}, \cX_{[c, b_i]};\Q)\to H_{k-1}(\cX_{[c,\, b_{i}]}, \cX_{c};\Q)\to \cdots.
\end{multline*}
We have proved that both $H_k(\cX_{[c,\, b_{1}]}, \cX_{c};\Q)$ and $H_k(\cX_{[c,\, b_{i+1}]}, \cX_{[c,\, b_{i}]};\Q)$ vanish except when $k=d$. Thus, using induction on $i$, we can conclude that for any $1\leq i\leq \nu$
\[
H_k(\cX_{[c,\, b_{i}]}, \cX_{c};\Q)=0 \quad \text{for all }k\neq d. 
\]
In particular,
\begin{equation}\label{eq:rel1}
    H_k(\cX_{[c,\, b_{\nu}]}, \cX_{c};\Q)=0 \quad \text{for all }k\neq d. 
\end{equation}

Replacing $\ell$ by $-\ell$ and applying similar arguments, we can inductively show that 
\[
H_k(\cX_{[a_i,\, b_{\nu}]}, \cX_{[c, b_{\nu}]};\Q)=0 \quad \text{for all }k\neq d. 
\]
In particular,
\begin{equation}\label{eq:rel2}
    H_k(\cX_{[a_\mu,\, b_{\nu}]}, \cX_{[c, b_{\nu}]};\Q)=0 \quad \text{for all }k\neq d. 
\end{equation}
By \eqref{eq:rel1} and \eqref{eq:rel2}, and the long exact sequence of relative homology groups, we conclude that 
\[
H_k(\cX_{[a_\mu,\, b_{\nu}]}, \cX_{c};\Q)=0 \quad \text{for all }k\neq d.
\]
Since $\cX$ has no vertex in $\{\ell>b_\nu\}\cup \{\ell<a_\mu\}$, the inclusion $\cX_{[a_\mu,\, b_{\nu}]}\subset \cX$ is a homotopy equivalence. Therefore, we obtain the desired vanishing
\[
H_k(\cX, \cX_{c};\Q)=0 \quad \text{for all }k\neq d.\qedhere
\]
\end{proof}

\section{A Sch\"on but non-H-regular variety}\label{sec:counterexample}
In this section, we construct a $3$-dimensional sch\"on subvariety
\[
U\subset (\mathbb{C}^*)^8
\]
admitting a tropical compactification with a disconnected positive-dimensional torus-orbit intersection. We then prove that the link $L(\trop(U))$ has nontrivial first homology, which implies that $\trop(U)$, and hence $U$, is not H-regular. Consequently, the connectedness assumption in H-regularity is essential for both Theorem \ref{thm:local} and \cite[Theorem 2.5]{Hacking}. In \cite[Remark~2.11]{Hacking}, Hacking suggested that there should exist tropical fans violating this vanishing property, although no such example was known at the time. Our construction provides such an example.

\smallskip
We first construct the toric ambient space.
Let $e_1, \ldots, e_9 \in \R^9$ be the standard basis of $\R^9$, and let $e_0 = - \sum_{i=1}^9 e_i$.
The fan with cones generated by proper subsets of $\{e_0, \ldots, e_9\}$ has associated toric variety $\P^9$.
Let $\Sigma_0$ be the 3-dimensional fan obtained from the 3-skeleton of this fan by performing stellar subdivision at the cones spanned by $\{e_1, e_2, e_3\}$ and $\{e_4, e_5, e_6\}$.
The rays of $\Sigma_0$ are generated by the vectors $e_0, \ldots, e_9$ and
\[ a = e_1 + e_2 + e_3 \quad\text{and}\quad b = e_4+e_5+e_6.
\]
Its maximal cones have generators
\begin{itemize}
  \item $\{e_i, e_j, e_k\}$ with $0 \leq i < j < k \leq 9$, except for $\{e_1, e_2, e_3\}$ and $\{e_4, e_5, e_6\}$,
  \item $\{a, e_i, e_j\}$ with $1 \leq i < j \leq 3$, and
  \item $\{b, e_i, e_j\}$ with $4 \leq i < j \leq 6$.
\end{itemize}
Write $\rho_a$ and $\rho_b$ for the rays generated by $a$ and $b$.

The surjection of lattices $\Z^9 \to \Z^9 / (a-b) \cong \Z^8$ induces a linear map $Q: \R^9 \to \R^9 / (a-b) \cong \R^8$. Consider the 3-dimensional fan in $\R^8$:
\[\Sigma := \{Q(\sigma) : \sigma \in \Sigma_0\}. \]

\begin{proposition}\label{ambient}
  The collection of cones $\Sigma$ is a fan with $|\Sigma| = Q(|\Sigma_0|)$.
  Every cone in $\Sigma_0$ maps isomorphically onto its image, and distinct cones in $\Sigma_0$ have distinct images, except for $\rho_a$ and $\rho_b$, which are mapped onto the same ray of $\Sigma$.

  Consequently, $Q$ induces a map of toric varieties 
  \[
  q: X_{\Sigma_0} \to X_\Sigma.
  \]
  Each torus orbit $O_\sigma$ of $X_{\Sigma_0}$ is mapped onto a torus orbit $O_{Q(\sigma)}$ in $X_\Sigma$ with 1-dimensional fibers that are the orbits of the 1-parameter subgroup
  \begin{equation}\label{eq_lambda}
  \lambda(t) = (t,t,t,t^{-1},t^{-1},t^{-1},1,1,1)
  \end{equation}
   on $O_\sigma$.
 \end{proposition}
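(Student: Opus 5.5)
The proof is a direct combinatorial check on the generators, followed by the standard toric morphism criterion and a description of orbit maps.

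\textbf{Injectivity on cones.} The kernel of $Q$ is the line spanned by $a-b=e_1+e_2+e_3-e_4-e_5-e_6$. A cone $\sigma\in\Sigma_0$ maps isomorphically onto $Q(\sigma)$ exactly when $\operatorname{span}(\sigma)\cap\R(a-b)=0$. Every cone of $\Sigma_0$ has at most three generators. The only ways a linear combination of at most three generators can equal a nonzero multiple of $a-b$ are:
\begin{itemize}
\item a combination involving both $a$ and $b$ (or $a$ together with $e_4,e_5,e_6$, etc.), which needs the full supports $\{1,2,3\}$ and $\{4,5,6\}$;
\item a combination involving $e_0$, which contributes $-\sum e_i$ and forces at least nine coordinates to be cancelled.
\end{itemize}
A case check on supports shows neither happens within a single cone. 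No cone contains both $a$ and $b$. No cone contains all of $e_1,e_2,e_3$, since that cone was subdivided, and likewise for $e_4,e_5,e_6$. Hence each $Q|_\sigma$ is injective and simplicial cones stay simplicial. Lattice-wise, I would check that the primitive generators still form part of a basis where they did before, though only the isomorphism of cones is needed here.

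\textbf{Distinct images and the fan property.} Next I show that $Q(\sigma)\cap Q(\tau)=Q(\sigma\cap\tau)$, except that $Q(\rho_a)=Q(\rho_b)$. Suppose $x\in\sigma$, $y\in\tau$ and $Q(x)=Q(y)$. Then $x-y=c(a-b)$. Writing $x$ and $y$ in generator coordinates, $x$ and $y$ together use at most six generators. The equation $x-y=c(a-b)$ with $c\neq0$ forces the generator multiset to contain, on one side, $a$ or all of $e_1,e_2,e_3$, and on the other, $b$ or all of $e_4,e_5,e_6$. The full triples are excluded as above, so $x$ is a multiple of $a$ and $y$ the same multiple of $b$, up to components that cancel. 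This is the main obstacle. The concern is coefficient cancellation involving $e_0$: a relation such as $\alpha e_0+\sum\beta_ie_i$ could in principle mimic $a-b$. I would handle it by noting that the $e_i$ with $i\ge1$ are a basis, so $e_0$ appearing with nonzero coefficient puts nonzero mass on all nine coordinates. Two cones have only six generators in total, and at most four of them are distinct $e_i$'s from $\{e_7,e_8,e_9\}\cup\ldots$, so coordinates outside $\{1,\dots,6\}$ cannot all cancel. The conclusion is that intersections of images are images of faces. This gives the fan property, except that the cones through $\rho_a$ and through $\rho_b$ now share a ray. That shared ray is exactly the common face $Q(\rho_a)=Q(\rho_b)$, so $\Sigma$ is a fan and $|\Sigma|=Q(|\Sigma_0|)$ holds by definition.

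\textbf{Toric map and orbits.} Since $Q$ maps each cone of $\Sigma_0$ into a cone of $\Sigma$ and is a lattice map, the standard criterion \cite[Theorem 3.3.4]{CLS11} yields the toric morphism $q$. By the orbit--cone correspondence, $O_\sigma$ maps onto $O_{Q(\sigma)}$ whenever $Q(\sigma)$ is the minimal cone containing the image of the relative interior of $\sigma$, which holds because $Q|_\sigma$ is an isomorphism. The map $O_\sigma=T_{N/N_\sigma}\to T_{N'/N'_{Q\sigma}}$ is a surjective homomorphism of tori. Its kernel is the image of $\ker(T_N\to T_{N'})=\lambda(\C^*)$, where $\lambda$ is the cocharacter $a-b$, giving \eqref{eq_lambda}. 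Because $\operatorname{span}\sigma\cap\R(a-b)=0$, this image is one-dimensional in $O_\sigma$. Hence the fibers are the one-dimensional $\lambda$-orbits.
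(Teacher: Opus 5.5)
Your argument is correct in substance, but it takes a genuinely different route at the one nontrivial point.

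\textbf{What the paper does.} It obtains injectivity on cones from the observation that no cone of the $3$-skeleton of the fan of $\P^9$ contains a nonzero vector with exactly three zero coordinates. It then delegates both the fan property and the claim that only $\rho_a,\rho_b$ collide to the Macaulay2 computation in the appendix: the \texttt{fan} constructor fails if the images do not form a fan, and the cone counts are compared dimension by dimension. The toric consequences are simply called standard.

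\textbf{What you do differently.} You prove the combinatorics by hand, reducing everything to the lemma that $Q(x)=Q(y)$ with $x\in\sigma$, $y\in\tau$, $x\neq y$ forces $\{x,y\}=\{ca,cb\}$, and you spell out the orbit maps. This gives a computer-free proof that explains why $\rho_a,\rho_b$ is the only collision. Your criterion $\operatorname{span}(\sigma)\cap\R(a-b)=0$ is also the precise one: a cone meeting $\ker Q$ only at $0$ need not map injectively, although the paper's coordinate observation does hold for spans as well.

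\textbf{Where your argument is loose.} The count ``at most four of them are distinct $e_i$'s'' does not by itself rule out cancelling coordinates $7,8,9$, since four is at least three. The phrase ``up to components that cancel'' is a placeholder rather than an argument. Here is a clean way to prove the key lemma. Write $\R^9=\R^{10}/\R(1,\dots,1)$, with $e_i$ the image of $f_i$ for $0\le i\le 9$. Every point of $|\Sigma_0|$ has a lift $\tilde x\geq 0$ with at most three nonzero entries. Hence $\tilde x-\tilde y=c(f_1+f_2+f_3-f_4-f_5-f_6)+t(1,\dots,1)$ has at least four zero entries, which for $c\neq 0$ forces $t=0$. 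Then, taking $c>0$, $\tilde x$ is supported exactly on $\{1,2,3\}$ and $\tilde y$ on $\{4,5,6\}$, so $x=ca$ and $y=cb$. The same computation with real coefficients gives the span condition. With the lemma in hand, your deduction of the fan property and of distinct images goes through as written.

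\textbf{Orbit step.} It is worth saying explicitly that $\ker(O_\sigma\to O_{Q(\sigma)})$ is the connected rank-one torus $\ker(\bar Q)\otimes\C^*$, because $N'/N'_{Q(\sigma)}$ is torsion-free. That is why it coincides with the image of $\lambda$.
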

 \begin{proof}
   The toric statements follow from the polyhedral ones by standard results on toric varieties.

   No cone in the 3-skeleton of the fan of $\P^9$
 contains a nonzero vector with exactly three coordinates equal to zero.
   Therefore, the 1-dimensional subspace $(a-b)$ intersects every cone of $\Sigma_0$ only at the origin, proving that $Q$ maps every cone of $\Sigma_0$ isomorphically onto its image.

   The claim about which cones have distinct images is verified by the Macaulay2 code in Appendix \ref{M2cert}.
 \end{proof}

 We now construct a sch\"on subvariety of $X_\Sigma$ that is not $H$-regular.
Let $G \subset \Gr(4, \C^{10})$ be the open set of subspaces realizing the uniform matroid of rank 4 on 10 elements.
If $V \in G$, then the projection 3-plane $\P(V)$ meets all torus orbits of $\P^9$ transversely, hence is contained in the non-compact toric variety obtained from $\P^9$ be removing all torus orbits of codimension greater than 3.

By definition, $\Sigma_0$ is the refinement of a subfan of the fan of $\P^9$. The induced toric variety map
\[
  \pi: X_{\Sigma_0} \to \P^9
\]
is the composition of a blow-down map and an open embedding. The argument in the previous paragraph implies that if $V\in G$, then $\P(V)$ is contained in the image of $\pi$, and we denote by $Y_V$ the proper transform of $\P(V)$ under $\pi$.
Let $x_1, \ldots, x_9$ be coordinates on the toric chart of $\P^9$ corresponding to the cone generated by $\{e_1,\ldots,e_9\}$. 
Then $Y_V$ is isomorphic to the blow-up of $\P(V)$ at two points:  $p_a := \P(V) \cap \{x_1=x_2=x_3=0\}$ and $p_b := \P(V) \cap \{x_4=x_5=x_6 = 0\}$.
\begin{proposition}\label{Yschon}
  For a general choice of $V \in G$, the restriction 
  \[
  q|_{Y_V}: Y_V \to X_\Sigma
  \]
  is a closed immersion. Its image meets every torus orbit of $X_\Sigma$ transversely, hence $U := q(V \cap (\C^*)^9)$ is sch\"on.
  However, there is a torus orbit of $X_\Sigma$ whose intersection with $\overline{U} = q(Y_V)$ is disconnected.
\end{proposition}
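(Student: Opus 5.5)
The plan is to analyse $q$ cone by cone, reduce injectivity and immersivity of $q|_{Y_V}$ to dimension counts over the Grassmannian, and then read the disconnected intersection off the two exceptional divisors over $p_a$ and $p_b$.

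\emph{Step 1: transversality upstairs.} Since $V\in G$ realizes the uniform matroid, $\P(V)$ meets every torus orbit of $\P^9$ transversely in the usual sense. The map $\pi$ is a composition of two toric blowups, along the codimension-$3$ orbit closures $\{x_1=x_2=x_3=0\}$ and $\{x_4=x_5=x_6=0\}$, followed by an open embedding. By Lemma~\ref{lemma:two transversal} and Lemma~\ref{lemma:blowup}, the proper transform $Y_V$ is therefore irreducible, smooth, and meets every torus orbit of $X_{\Sigma_0}$ transversely in the sense of Definition~\ref{defn:global transversal}.

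\emph{Step 2: local structure of $q$.} By Proposition~\ref{ambient}, every cone $\sigma\in\Sigma_0$ maps isomorphically onto $Q(\sigma)$, and $\R(a-b)\cap\sigma=0$. I will check that $\Z(a-b)$ together with the lattice points of $\operatorname{span}(\sigma)$ generates a saturated sublattice. This should follow because the cones are unimodular after the stellar subdivisions, and the certificate in Appendix~\ref{M2cert} can be used to confirm it if necessary. Given saturation, $X_\sigma\cong X_{Q(\sigma)}\times\C^*$ equivariantly, with $q$ the projection and the $\C^*$-factor the $\lambda$-orbits. Hence over each affine chart, $q$ is a trivial principal $\C^*$-bundle compatible with the orbit stratifications.

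\emph{Step 3: $q|_{Y_V}$ is a closed immersion.} Since $Y_V$ is projective (a blowup of $\P^3$), the restriction $q|_{Y_V}$ is proper. It therefore suffices to show that $q|_{Y_V}$ is injective and immersive for general $V$.

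Injectivity splits into two cases.
\begin{enumerate}
\item \emph{Points in orbits $O_\sigma, O_{\sigma'}$ with $Q(\sigma)\neq Q(\sigma')$.} These have different images automatically.
\item \emph{Points in the same fibre direction.} This is the case $\sigma=\sigma'$, or $\sigma\ni\rho_a$ and $\sigma'$ the corresponding cone containing $\rho_b$. Here we need that no two distinct points of $Y_V$ lie in a common $\lambda$-orbit. On the dense torus this is the condition $\lambda(t)\P(V)\cap\P(V)\cap(\C^*)^9=\emptyset$ for all $t\neq1$. For a fixed $t$, two general $3$-planes in $\P^9$ are disjoint; letting $t$ vary adds one dimension, so the incidence variety has expected dimension $3+3-9+1<0$. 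Hence the intersection is empty for $V$ in a dense open subset of $G$. On a boundary orbit of dimension $9-k$, the slice $Y_V\cap O_\sigma$ is a linear-type subvariety of dimension $3-k$, and the same count applies.
\end{enumerate}

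Immersivity amounts to the generating vector field $\xi=\mathrm{diag}(1,1,1,-1,-1,-1,0,0,0,0)$ of $\lambda$ never being tangent to $Y_V$. The condition $\xi(x)\in T_x\P(V)$ imposes $6$ conditions on $x$ in the $3$-dimensional $\P(V)$. Again an incidence count shows this locus is empty for general $V$, with analogous counts along the boundary strata and the exceptional divisors.

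Transversality of $q(Y_V)$ then descends from Step 1 through the product decompositions of Step 2. The reason is that a section-like smooth submanifold of $B\times\C^*$ transverse to all strata projects isomorphically onto its image, which carries the required product structure of Definition~\ref{defn:local transversal}. Consequently $U$ is sch\"on.

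\emph{Step 4: disconnectedness.} The rays $\rho_a$ and $\rho_b$ both map to the same ray $\rho$ of $\Sigma$. The orbit $O_\rho$ is therefore the image of $O_{\rho_a}\sqcup O_{\rho_b}$. The intersection $Y_V\cap O_{\rho_a}$ is a dense open subset of the exceptional $\P^2$ over $p_a$, and similarly for $p_b$. By injectivity, their images are disjoint, and each is a nonempty open piece of $q(Y_V)\cap O_\rho$. Both pieces are closed in $q(Y_V)\cap O_\rho$, since $q^{-1}(O_\rho)$ is the disjoint union of $O_{\rho_a}$ and $O_{\rho_b}$. Hence $q(Y_V)\cap O_\rho$ is disconnected.

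The main obstacle is Step 3: making the genericity arguments for injectivity and immersivity rigorous uniformly over all boundary strata, including the exceptional divisors where the $\lambda$-action is no longer the naive diagonal one. A secondary point is the lattice saturation needed in Step 2. I would first attempt both via explicit coordinates on each chart, falling back on the Macaulay2 certificate if necessary.
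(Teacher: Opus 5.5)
Your architecture matches the paper's: properness of $Y_V$, then injectivity and immersivity of $q|_{Y_V}$ by incidence counts over $\Gr(4,\C^{10})$, then transversality pushed down because $q$ is a submersion, then disconnectedness from the collapse of $\rho_a,\rho_b$. However, your injectivity argument misses a case.

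\textbf{The missing case.} In case (2) you reduce to ``no two distinct points of $Y_V$ lie in a common $\lambda$-orbit''. That covers $\sigma=\sigma'$. It does not cover pairs $x\in Y_V\cap O_{\rho_a}$, $y\in Y_V\cap O_{\rho_b}$ with $q(x)=q(y)$. Torus orbits of $X_{\Sigma_0}$ are $\lambda$-stable, so such $x,y$ are never $\lambda$-related.

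Indeed $q^{-1}(O_\rho)=O_{\rho_a}\sqcup O_{\rho_b}$, so each fibre of $q$ over $O_\rho$ consists of two disjoint $\lambda$-orbits. Thus $q$ is not a $\C^*$-bundle over any neighbourhood of $O_\rho$; your Step 2 trivializations $X_\sigma\cong X_{Q(\sigma)}\times\C^*$ describe $q$ only on source charts. (Also, by Proposition~\ref{ambient}, only the two rays collide, not larger cones containing them.)

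This is exactly the case your Step 4 invokes (``by injectivity, their images are disjoint''). So both the closed-immersion claim and the disconnectedness rest on it.

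It is easy to close in either of two ways.
\begin{enumerate}
\item As in Lemma~\ref{Yinj}: $\pi(x)\neq\pi(y)$ because $\pi(O_{\rho_a})$ and $\pi(O_{\rho_b})$ are disjoint orbits of $\P^9$. Hence the same $\Gr(2,\C^8)$ count applies.
\item Directly: $x_7/x_0,\,x_8/x_0,\,x_9/x_0$ are characters of $O_\rho$. On $q(Y_V\cap O_{\rho_a})$ they take the values at $p_a$, and on $q(Y_V\cap O_{\rho_b})$ the values at $p_b$. A collision gives lifts $\hat p_a,\hat p_b\in V$ with equal coordinates $x_0,x_7,x_8,x_9$. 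Then $\hat p_a-\hat p_b\neq 0$ is a vector of $V$ with four vanishing coordinates, which is impossible for $V\in G$.
\end{enumerate}

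\textbf{Exceptional divisors.} You leave these as the main obstacle; the paper's Lemma~\ref{YTY} makes everything uniform. The blowup argument is scheme-theoretic, so $Y_V=\pi^{-1}(\P(V))$ and $T_xY_V=\pi_x^{-1}(T_{\pi(x)}\P(V))$. Moreover, $\lambda$ acts freely on $\pi(X_{\Sigma_0})$, since every point there has a nonzero coordinate among $x_0,x_7,x_8,x_9$. By equivariance, $\pi$ is therefore injective on $\lambda$-orbits and $\pi_x(\xi(x))\neq 0$.

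Membership and tangency can then be tested in $\P^9$. A single incidence count treats all strata at once: the base has dimension at most $10$ and the fibre $\Gr(2,\C^8)$ has dimension $12$, so the total is less than $24$. In particular, injectivity inside $O_{\rho_a}$ is automatic, because $\pi$ contracts $Y_V\cap O_{\rho_a}$ to $p_a$ and $\lambda$ moves $p_a$. Your ``$3+3-9+1$'' heuristic should likewise be replaced by this count, since $\lambda(t)\P(V)$ is not general relative to $\P(V)$.

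\textbf{Saturation.} Your Step 2 check is a genuine point that the paper uses tacitly, but unimodularity is not the reason it holds. In the $\Gr(4,\C^9)$ variant from the remark after the proof, the cones are still unimodular, yet $a-b$ is divisible by $2$ modulo the span of $e_0,e_7,e_8$. Here saturation holds because $a-b$ stays primitive modulo every cone, and this uses the extra weight-zero coordinate $x_9$.
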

For a map $f: A \to B$ of smooth varieties, we denote by $f_x: T_x A \to T_{f(x)} B$ the induced map on the tangent space at $x\in A$.
\begin{lemma}\label{YTY}
  Let $V \in G$, let $x \in X_{\Sigma_0}$ and let $v \in T_x X_{\Sigma_0}$ be a tangent vector.
  \begin{enumerate}
    \item\label{inY} $x \in Y_V$ if and only if $\pi(x) \in \P(V)$.
    \item \label{inTY} If $x \in Y_V$, then $v \in T_x Y_V$ if and only if $\pi_x (v) \in T_{\pi(x)}\P(V)$.
    \item \label{kerq} If $x \in Y_V$ and $v \in T_x X_{\Sigma_0}$ is a nonzero tangent vector to the curve $t \mapsto \lambda(t) \cdot x$, then $\pi_x(v) \neq 0$. Here, the action $\lambda(t)$ is defined as in \eqref{eq_lambda}.
  \end{enumerate}
\end{lemma}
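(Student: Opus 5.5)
The plan is to verify each part locally, using the explicit geometry of $\pi$. As noted above, the part of $\P^9$ containing $\P(V)$ is the open toric variety $X^{\le 3}$ obtained by deleting all torus orbits of codimension $>3$. The map $\pi$ is the blowup of $X^{\le 3}$ along the two closed subvarieties $Z_a=\overline{O}_{\{e_1,e_2,e_3\}}\cap X^{\le 3}$ and $Z_b=\overline{O}_{\{e_4,e_5,e_6\}}\cap X^{\le 3}$. No cone of the $3$-skeleton strictly contains $\{e_1,e_2,e_3\}$, so $Z_a$ is just the $6$-dimensional orbit $O_{\{e_1,e_2,e_3\}}\cong(\C^*)^6$; similarly for $Z_b$. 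These centers are disjoint, smooth and of codimension $3$. Since $V$ realizes the uniform matroid, $\P(V)$ meets $Z_a$ transversely, in the single point $p_a$: the dimensions are $3+6=9$, and transversality follows from Lemma~\ref{lemma:two transversal} applied to the transversality of $\P(V)$ to the orbits. The same holds for $Z_b$ and $p_b$. Away from $\pi^{-1}(p_a)\cup\pi^{-1}(p_b)$, and more generally away from $\pi^{-1}(Z_a\cup Z_b)$, the map $\pi$ is an isomorphism, and then parts \eqref{inY} and \eqref{inTY} are immediate there.

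For \eqref{inY} and \eqref{inTY} near $p_a$, I will choose local analytic coordinates $u_1,u_2,u_3,w_1,\dots,w_6$ of $\P^9$ at $p_a$ with $Z_a=\{u_1=u_2=u_3=0\}$ and $\P(V)=\{w_1=\cdots=w_6=0\}$. Transversality of $\P(V)$ and $Z_a$ at $p_a$ is exactly what allows this, via the implicit function theorem as in the proof of Lemma~\ref{lemma:two transversal}. The blowup along $Z_a$ depends only on the ideal $(u_1,u_2,u_3)$, so locally
\[
X_{\Sigma_0}\simeq \Blowup_0\C^3\times \C^6,\qquad \pi=\beta\times \mathrm{id},
\]
where $\beta:\Blowup_0\C^3\to\C^3$ is the ordinary blowup. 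The proper transform is $Y_V=\Blowup_0\C^3\times\{0\}$. This contains the whole fiber $\pi^{-1}(p_a)\cong\P^2$, which gives \eqref{inY}. For \eqref{inTY}, a tangent vector $v=(v_1,v_2)$ satisfies $\pi_x(v)=(\beta_*v_1,v_2)$. Both conditions, $v\in T_xY_V$ and $\pi_x(v)\in T_{\pi(x)}\P(V)=\{w=0\}$, are equivalent to $v_2=0$.

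For \eqref{kerq}, I will use that $\pi$ is torus-equivariant, so $\pi(\lambda(t)\cdot x)=\lambda(t)\cdot\pi(x)$. Hence $\pi_x(v)$ is tangent to the $\lambda$-orbit of $\pi(x)$, and it suffices to show that this orbit map is an immersion at $t=1$. That holds as soon as $\pi(x)$ is not $\lambda$-fixed, because then the stabilizer is finite. In homogeneous coordinates, $\lambda$ scales $x_1,x_2,x_3$ by $t$, scales $x_4,x_5,x_6$ by $t^{-1}$, and fixes $x_0,x_7,x_8,x_9$. Its fixed locus is therefore the union of the linear spaces $\{x_i=0,\ i\notin\{1,2,3\}\}$, $\{x_i=0,\ i\notin\{4,5,6\}\}$ and $\{x_i=0,\ i\notin\{0,7,8,9\}\}$, of codimensions $7$, $7$ and $6$. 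Because $V$ is uniform of rank $4$, the $3$-plane $\P(V)$ misses all of them. By \eqref{inY}, $\pi(x)\in\P(V)$, so $\pi(x)$ is not fixed and $\pi_x(v)\neq 0$.

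The main obstacle is the local model at $p_a$: I must justify that the toric blowup is analytically the product $\Blowup_0\C^3\times\C^6$ in coordinates adapted to $\P(V)$. This holds because a blowup is determined by the ideal of its center, and that ideal is $(u_1,u_2,u_3)$ in the new coordinates.
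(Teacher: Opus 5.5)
Your proof is correct, and it follows the paper for part (3) but takes a genuinely different route for parts (1) and (2).

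\textbf{Part (3).} This is essentially the paper's argument. Equivariance of $\pi$ reduces the claim to the $\lambda$-orbit map at $\pi(x)$ being immersive at $t=1$, which holds because $\pi(x)$ is not $\lambda$-fixed. The paper rules out fixed points by noting that they have at least six vanishing coordinates and hence lie outside $\pi(X_{\Sigma_0})$. You instead use (1) and uniformity to keep $\P(V)$ off the fixed locus. Either works.

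\textbf{Parts (1) and (2).} The paper invokes Lemma~\ref{lemma:blowup}, read scheme-theoretically, to get $\pi^{-1}(\P(V))=Y_V$ as subschemes. It then proves (2) coordinate-freely via length-two Artinian subschemes: $Z(v)\subset Y_V$ if and only if $\pi(Z(v))\subset \P(V)$. You instead use the specific geometry: $\pi$ is an ordinary blowup of $X^{\le 3}$ along two disjoint smooth codimension-$3$ orbits, each meeting $\P(V)$ transversally in a single point. From this you build adapted coordinates and the product model $\Blowup_0\C^3\times\C^6$, where both the scheme-theoretic equality and the tangent criterion become a direct computation.

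\textbf{Comparison.} Your version is more elementary, and it actually verifies what the paper only asserts, namely that the argument of Lemma~\ref{lemma:blowup} is schematic. It also yields $Y_V\cong\Blowup_{p_a,p_b}\P(V)$ along the way. The paper's version needs no identification of the centers or of the intersection points, and it applies verbatim to any weighted toric blowup of a transverse subvariety.

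\textbf{One small slip.} $\pi$ is an isomorphism only away from $\pi^{-1}(Z_a\cup Z_b)$, not away from $\pi^{-1}(p_a)\cup\pi^{-1}(p_b)$. The points lying over $Z_a\setminus\{p_a\}$ or $Z_b\setminus\{p_b\}$ are harmless, though. Since $Y_V$ is the closure of a subset of the closed set $\pi^{-1}(\P(V))$, we have $Y_V\subset\pi^{-1}(\P(V))$, and $\pi(x)\notin\P(V)$ at such points. So (1) holds there and (2) is vacuous.
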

\begin{proof}
Statement (\ref{inY}) is equivalent to the equality 
\[
\pi^{-1}(\P(V))=Y_V.
\]
Since $\P(V)$ intersects all the torus orbits transversely, the above equality follows from Lemma \ref{lemma:blowup}. 

Notice that the argument in Lemma \ref{lemma:blowup} is schematic. More precisely, it implies that $\pi^{-1}(\P(V))$ is reduced and irreducible, and $\pi^{-1}(\P(V))=Y_V$ as subschemes of $X_{\Sigma_0}$. Thus, for $v\in T_xX_{\Sigma_0}$,
\[
v\in T_xY_v\iff Z(v)\subset Y_v \iff \pi(Z(v))\subset \P(V)\iff \pi_x (v) \in T_{\pi(x)}\P(V)
\]
where $Z(v)$ denotes the Artinian subscheme of length 2 corresponding to $v$. 
Therefore, (\ref{inTY}) follows. 

Notice that the action $\C^*$-actions on $\pi(X_{\Sigma_0})$ induced by $\lambda: \C^*\to (\C^*)^9$ has no fixed points. Hence, for any $x\in X_{\Sigma_0}$, the map \[
t\mapsto \pi(\lambda(t)\cdot x)
\]
induces a locally closed embedding $\C^*\to \P^9$. Therefore, in the following commutative diagram, 
\[
\xymatrix{
& T_xY_V\ar@{^{(}->}[d] \ar[r]^{(\pi|_{Y_V})_x\quad}& T_{\pi(x)}\P(V)\ar@{^{(}->}[d]\\
T_1\C^*\ar[r]& T_xX_{\Sigma_0} \ar[r]^{\pi_x}& T_{\pi(x)}\P^9
}
\]
the composition of the bottom row is injective. By definition, $v\in T_xY_V$ is nonzero and (as an element in $T_xX_{\Sigma_0}$) in the image of $T_1\C^*$. Thus, the image of $v$ in $T_{\pi(x)}\P(V)$ is nonzero, and hence (\ref{kerq}) follows.
%
\end{proof}
\begin{lemma}\label{Yinj}
  For a general $V\in G$, $q|_{Y_V}$ is injective.
\end{lemma}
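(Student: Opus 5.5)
The map $q$ is injective on each torus orbit $O_\sigma$ of $X_{\Sigma_0}$ modulo the $\lambda$-orbits. It is injective on the set of orbits except that $\rho_a$ and $\rho_b$ have the same image (Proposition \ref{ambient}). So I will prove injectivity of $q|_{Y_V}$ in two steps: first on each orbit intersection $Y_V\cap O_\sigma$, then across the pair of divisors $D_a=\overline{O_{\rho_a}}$ and $D_b=\overline{O_{\rho_b}}$.

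\emph{Step 1 (single orbits).} Suppose $x,y\in Y_V\cap O_\sigma$ with $q(x)=q(y)$. Then $y=\lambda(t)\cdot x$ for some $t\in\C^*$. By Lemma \ref{YTY}(\ref{inY}), both $\pi(x)$ and $\pi(\lambda(t)\pi(x))=\pi(y)$ lie in $\P(V)$, using that $\pi$ is equivariant. So it suffices to show that for general $V$ the following holds: if $p\in\P(V)$ and $\lambda(t)p\in\P(V)$, then $t=1$. Equivalently, $\P(V)\cap\lambda(t)\P(V)$ contains no point of $\pi(X_{\Sigma_0})$ for $t\neq 1$.

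I would argue this by a dimension count on the incidence variety
\[
I=\{(V,p,t): p\in \P(V)\cap \lambda(t)^{-1}\P(V),\ t\ne1\},
\]
stratified by the torus orbit of $\P^9$ containing $p$. The needed fact is that the $\C^*$-orbit of $p$ under $\lambda$ is a curve; it has no fixed points in $\pi(X_{\Sigma_0})$, as noted in the proof of Lemma \ref{YTY}. Because of this, the condition that $V\ni \hat p$ and $V\ni \lambda(t)\hat p$ imposes two independent point conditions on the $4$-plane, lowering the dimension of the fibre over $(p,t)$ in $\Gr(4,10)$ by $2\cdot 6=12$. On an orbit $O$ of $\P^9$ of dimension $k\le 9$, the points $p$ move in $k$ dimensions and $t$ in $1$. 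We also restrict to orbits met by $\P(V)$, which requires $\operatorname{codim} O\le 3$.

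The count is cleanest in vector-space terms. $V$ must contain the $2$-plane spanned by $\hat p$ and $\lambda(t)\hat p$ (these are independent for $t\ne1$). The pairs (line in $\P(V)$, $t$) give a family of dimension $\le 9+1$. This is compared against the requirement that the line also be "$\lambda(t)$-related", which cuts dimension by $8$, so the family has dimension at most $9+1-8+\dim\Gr(2,8)=14<24$. Hence $I$ does not dominate $G$. I expect to redo this count carefully orbit by orbit, including the boundary points where $\pi(x)$ lies in a coordinate subspace $\{x_1=x_2=x_3=0\}$.

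\emph{Step 2 (the two divisors).} Points $x\in Y_V\cap O_\sigma$ and $y\in Y_V\cap O_\tau$ with $\sigma\ne\tau$ can only have equal images if $Q(\sigma)=Q(\tau)$. By Proposition \ref{ambient}, this forces $\{\sigma,\tau\}=\{\rho_a,\rho_b\}$. Now $Y_V\cap D_a$ is the exceptional divisor over $p_a$, namely $\P(T_{p_a}\P(V))\subset \P(T_{p_a}\P^9)$ restricted to directions, and similarly for $b$. Under $q$, the open parts of $D_a$ and $D_b$ map to the same divisor orbit. Identifying $O_{\rho_a}/\lambda$ and $O_{\rho_b}/\lambda$ with the common orbit $O_{Q(\rho_a)}$ explicitly in coordinates, the images of $Y_V\cap O_{\rho_a}$ and $Y_V\cap O_{\rho_b}$ are two surfaces determined respectively by the tangent data of $V$ at $p_a$ and at $p_b$. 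These data vary independently as $V$ varies in $G$.

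So I would show that two general such surfaces in the $7$-dimensional orbit are disjoint, again by a dimension count ($2+2<7$ after accounting for the fibres). Disjointness of these images is exactly what makes $Y_V\cap D_a$ and $Y_V\cap D_b$ the two components of a disconnected intersection, which is the later claim of Proposition \ref{Yschon}.

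\emph{Main obstacle.} The hard part is making Step 1's dimension count rigorous uniformly over all boundary orbits and over the exceptional divisors. Over those divisors $\pi$ is not injective, so the condition must be rephrased via tangent directions and Lemma \ref{YTY}(\ref{inTY}) rather than points of $\P(V)$. I would handle the exceptional loci by working in the blow-up chart directly and repeating the incidence argument with "$V$ contains $p_a$ and a prescribed direction" in place of point conditions.
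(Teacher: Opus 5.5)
Your proposal is correct in substance, and Step 1 is the paper's argument transported to $\P^9$. Requiring $V$ to contain the two independent vectors $\hat p$ and $\lambda(t)\hat p$ leaves a fibre $\Gr(2,\C^8)$ of dimension $12$ over a base of dimension at most $9+1$. So your first count already bounds the incidence variety by $22<24$. The ``cleanest'' recount that subtracts $8$ has no justification and should be dropped.

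Your stated main obstacle is not real. The reduction only uses that $\pi$ is injective on each $\lambda$-orbit of $X_{\Sigma_0}$. By equivariance, this holds as soon as $\lambda$ acts on $\pi(X_{\Sigma_0})$ with \emph{trivial} stabilizers. That is slightly more than ``no fixed points'', and it is also what makes $\hat p,\lambda(t)\hat p$ independent for $t\neq 1$. It is true: a point of $\pi(X_{\Sigma_0})$ has at most three vanishing homogeneous coordinates. Hence it has a nonzero one among $x_0,x_7,x_8,x_9$, which forces the projective scalar to be $1$, and a nonzero one among $x_1,\dots,x_6$, which forces $t=1$. The non-injectivity of $\pi$ on $D_a$ is irrelevant: $\lambda$ fixes the fibre coordinates $x_1/x_3,\,x_2/x_3$ of $O_{\rho_a}\to\pi(O_{\rho_a})$ while acting freely on the base. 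So Step 1 as written already covers every orbit, exceptional ones included, with no orbit-by-orbit analysis and no tangent conditions.

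The genuine difference is Step 2. The paper uses one correspondence $R_{\sigma,\tau}$ for both cases. Since $\pi(O_{\rho_a})$ and $\pi(O_{\rho_b})$ are disjoint orbits of $\P^9$, the inequality $\pi(x)\neq\pi(y)$ is automatic there. The same two-point count over $O_{\rho_a}\times_O O_{\rho_b}$, which has dimension $9$, then finishes.

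Your geometric route also works, but correct the description. By Lemma~\ref{YTY}(\ref{inY}), $Y_V\cap D_a=\pi^{-1}(p_a)\cong\P^2$ is the whole fibre, so it depends only on the point $p_a$, not on tangent data. Use the coordinates $x_1/x_3,\,x_2/x_3,\,x_4/x_6,\,x_5/x_6,\,x_7,\,x_8,\,x_9$ on the $7$-dimensional orbit $O_{Q(\rho_a)}$, with $x_0=1$. The two images are translates of two fixed $2$-dimensional subtori. They meet if and only if $p_a$ and $p_b$ have the same $(x_7,x_8,x_9)$. But then $\hat p_a-\hat p_b$ would be a nonzero vector of $V$ vanishing at $x_0,x_7,x_8,x_9$, which uniformity of the matroid forbids.

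Thus your route proves disjointness for every $V\in G$, not just general $V$. It also exhibits concretely the disconnected intersection claimed in Proposition~\ref{Yschon}. The paper's uniform correspondence is shorter and needs no coordinates.
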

\begin{proof}
  Let $O_\sigma$ and $O_\tau$ be torus orbits in $X_{\Sigma_0}$ that map onto the same torus orbit $O$ in $X_\Sigma$. By Proposition \ref{ambient}, either $\sigma = \tau$, or, after possibly swapping $\sigma$ and $\tau$, we have $\sigma = \rho_a$ and $\tau = \rho_b$. 

  Now consider the correspondence
  \[
    R_{\sigma, \tau} = \{(W, x, y) \in G \times (O_\sigma \times_O O_\tau)\mid x,y \in Y_W, x \neq y \}.
  \]
  We claim that $\pi(x) \neq \pi(y)$ for all $(W, x,y)\in R_{\sigma, \tau}$. 
Indeed, the images $\pi(O_{\rho_a})$ and $\pi(O_{\rho_b})$ are disjoint torus orbits in $\P^9$. Hence, it suffices to treat the case $\sigma=\tau$. In that situation, since $x$ and $y$ have the same image in $O$, they lie in the same orbit of the $\C^*$-action defined by $\lambda: \C^*\to (\C^*)^9$. Any $\C^*$-fixed points on $\P^9$ has at least 6 zero projective coordinates, and no such point lies in the image of $\pi$. Thus, the $\C^*$-action on $\pi(X_{\Sigma_0})$ has no fixed points, and hence the $\C^*$-action on $X_{\Sigma_0}$ has no fixed points either. In particular, the restriction of $\pi$ to any $\C^*$-orbit is injective. Since $x$ and $y$ belong to the same $\C^*$-orbit, $x\neq y$ implies that $\pi(x)\neq \pi(y)$, proving the claim. 

  By the claim and Lemma \ref{YTY}(\ref{inY}), if $(x,y) \in O_\sigma \times_O O_\tau$, then the fiber of $R_{\sigma, \tau}$ over $(x,y) \in O_\sigma \times_O O_\tau$ has dimension at most $\dim \Gr(2, \C^8) = 12$.
  The base $O_\sigma \times_O O_\tau$ has dimension at most $\dim(X_{\Sigma_0})+1 = 10$.
  Hence, 
  \[
  \dim R_{\sigma, \tau} \leq 10 + 12 < 24 = \dim Gr(4, \C^{10}).
  \]

  It follows that the image of the projection $R_{\sigma, \tau} \to \Gr(4, \C^{10})$ is contained in a Zariski closed proper subset. Since there are only finitely many pairs of orbits $O_\sigma, O_\tau$, for a general choice of $W \in \Gr(4, \C^{10})$, the restriction map $q|_{Y_W}$ is injective .
\end{proof}
\begin{lemma}\label{TYinj}
  For a general $V\in G$, $q|_{Y_V}$ is injective on all tangent spaces.
\end{lemma}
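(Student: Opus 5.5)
We want: for general $V$, the differential $(q|_{Y_V})_x: T_xY_V\to T_{q(x)}X_\Sigma$ is injective at every $x\in Y_V$.

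\textbf{Step 1: identify the kernel of $q_x$.} Since $Q$ maps every cone of $\Sigma_0$ isomorphically onto its image (Proposition~\ref{ambient}), $q$ is, on each affine chart $X_\sigma\to X_{Q(\sigma)}$, the quotient of $X_\sigma$ by the free action of the one-parameter subgroup $\lambda$. Locally in the analytic topology it is therefore a smooth submersion with one-dimensional fibers, the $\lambda$-orbits; freeness follows from $\lambda(t)\cdot x=x$ having no solution $t\neq 1$, as in the proof of Lemma~\ref{Yinj}. I would verify this on the affine charts using the splitting $X_\sigma\cong(\C^*)^m\times X_{\sigma'}$: since $a-b\notin\operatorname{span}$ of any cone, $\lambda$ acts through the torus factor transversally to $\operatorname{span}(\sigma)$. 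Hence $\ker q_x$ is exactly the line $\C v_x$, where $v_x$ is the tangent vector to $t\mapsto\lambda(t)\cdot x$ at $t=1$. Thus it suffices to show that, for general $V$, no $x\in Y_V$ has $v_x\in T_xY_V$.

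\textbf{Step 2: translate to $\P(V)$.} By Lemma~\ref{YTY}(\ref{inTY}), $v_x\in T_xY_V$ if and only if $\pi_x(v_x)\in T_{\pi(x)}\P(V)$. By Lemma~\ref{YTY}(\ref{kerq}), $w:=\pi_x(v_x)$ is a nonzero vector; it is the velocity at $p=\pi(x)$ of the $\C^*$-orbit through $p$ in $\P^9$. In affine terms, with $p=[u]$ for $u\in V$, the condition says that $\xi(u):=\frac{d}{dt}\lambda(t)u|_{t=1}=(0,u_1,u_2,u_3,-u_4,-u_5,-u_6,0,0,0)$ lies in $V+\C u=V$. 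Nonvanishing of $w$ means $\xi(u)\notin\C u$.

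\textbf{Step 3: dimension count.} Consider the incidence variety
\[
R=\{(W,[u])\in G\times\pi(X_{\Sigma_0}) : u\in W,\ \xi(u)\in W\}.
\]
Fibre over $[u]$: since $u$ and $\xi(u)$ are independent, requiring $W\supset\operatorname{span}(u,\xi(u))$ cuts a $\Gr(2,\C^8)$ of dimension $12$. The base has dimension at most $9$, so $\dim R\le 21<24=\dim\Gr(4,\C^{10})$. Hence the image of $R$ in the Grassmannian is contained in a proper closed subset, and for $V$ outside it the differential is injective everywhere on $Y_V$. When $x$ lies over the exceptional loci, $\pi(x)\in\{p_a,p_b\}$, Lemma~\ref{YTY} still applies verbatim at $x$, so no separate treatment is needed.

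The main obstacle is Step 1: justifying that $\ker q_x$ is spanned by $v_x$ at boundary points, where $X_{\Sigma_0}$ may be singular. It is simplicial, since the stellar subdivisions there are not unimodular in general. If tangent spaces there are problematic, I would use the local product structure $B^{m-k}\times U_P$ from Definition~\ref{defn:local transversal} and Lemma~\ref{lemma:blowup}. This shows $Y_V$ is smooth only where it meets orbits of codimension $\le3$ transversely, and that $Y_V$ lies in the smooth locus, since $\dim U_P$ equals the orbit codimension, and the relevant cones are $\le3$-dimensional and smooth after checking. Then the whole computation takes place among smooth points.
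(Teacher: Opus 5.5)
Your proof is essentially the paper's. Both use an incidence correspondence whose fibres are (open subsets of) $\Gr(2,\C^8)$ by Lemma~\ref{YTY}(2),(3), so its image in $\Gr(4,\C^{10})$ lies in a proper closed subset. The differences are minor: you parametrize by the point $\pi(x)=[u]\in\P^9$ (base of dimension $9$, total $21$) rather than by pairs $(x,v)\in K_\sigma$ orbit by orbit (dimension $\le 10$, total $22$), and your Step~1 makes explicit the identification $\ker q_x=\C v_x$, which the paper uses tacitly.

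Your closing worry is unnecessary. The stellar subdivisions at $e_1+e_2+e_3$ and $e_4+e_5+e_6$ are ordinary blowups of smooth cones, so $X_{\Sigma_0}$ is smooth. One correction to Step~1: freeness of the $\lambda$-action means trivial stabilizers, which is stronger than the absence of fixed points shown in Lemma~\ref{Yinj}. It comes instead from the image of $a-b$ being primitive in $\Z^9/(\operatorname{span}(\sigma)\cap\Z^9)$ for every cone $\sigma$ of $\Sigma_0$.
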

\begin{proof}
  Let $O_\sigma$ be a torus orbit in $X_{\Sigma_0}$.
  Let
  \[
    K_\sigma = \{(x,v) \in TX_{\Sigma_0} \mid x \in O_\sigma, \, 0\neq v \in T_x X_{\Sigma_0}, \, q_x(v) =0\}.
  \]
  The projection $K_\sigma\to O_\sigma$ has 1-dimensional fibers. Hence, 
  \[
  \dim K_\sigma\leq \dim X_{\Sigma_0}+1=10. 
  \]
  Now, consider
  \[
    R_\sigma = \{(W, (x, v))\in G \times K_\sigma \mid x \in Y_W,  v \in T_x Y_W\}.
  \]
  By Lemma \ref{YTY} (3), the fiber of $R_\sigma$ over a point $(x,v) \in K_\sigma$ has dimension at most $\dim \Gr(2, \C^8) = 12$.
  Hence, 
  \[
  \dim R_\sigma \leq \dim K_\sigma +12\leq 10 + 12 = 22.
  \]
  It follows that the image of the projection $R_\sigma \to G$ is contained in a Zariski closed proper subset. Since there are finitely many orbits $O_\sigma$, $q|_{Y_W}$ is injective on tangent spaces for a general choice of $W$.
\end{proof}

\begin{proof}[Proof of Proposition \ref{Yschon}]
  By Lemmas \ref{Yinj} and \ref{TYinj}, we can pick a general $V \in G$ such that
  \begin{enumerate}
      \item $\P(V)$ meets all torus orbits of $\P^9$ transversely;
      \item $q|_{Y_V}$ is injective;
      \item $q|_{Y_V}$ is injective on tangent spaces.
  \end{enumerate}
For such $V$, $q|_{Y_V}: Y_V\to X_\Sigma$ is a closed immersion.

Since $q: X_{\Sigma_0}\to X_{\Sigma}$ is a locally a principal $\C^*$-fibration, $q$ is a submersion. For any torus orbit $O_\sigma\subset X_{\Sigma_0}$, a general choice of $V\in G$, and any $x\in Y_V\cap O_\sigma$, 
\begin{equation}\label{3equalities}
    T_{q(x)}q(Y_V)+ T_{q(x)}q(O_\sigma)= q_x(T_xY_V+T_xO_\sigma)= q_x(T_xX_{\Sigma_0})=T_{q(x)}X_{\Sigma},
\end{equation}
where the first equality follows from $q|_{Y_V}$ being an immersion, the second follows from the intersection of $Y_V$ and $O_\sigma$ being transverse, and the last follows from $q$ being a submersion. Since for any torus orbit $O_{\sigma'}\subset X_{\sigma}$ and $x'\in q(Y_V)\cap O_{\sigma'}$, there exist a torus orbit $O_\sigma\subset X_{\Sigma_0}$ and $x\in Y_V\cap O_\sigma$, the equality \eqref{3equalities} implies that $q(Y_V)$ intersects every torus orbit of $X_{\Sigma}$ transversely. 

  Finally, for the statement on disconnectedness, observe that 
  \[
  q(Y_V) \cap q(O_{\rho_a}) = q\big((Y_V \cap O_{\rho_a}) \cup (Y_V \cap O_{\rho_b})\big)
  \]
  is not connected.
\end{proof}

\begin{remark}
  If we apply the same construction with $V$ a general element of $\Gr(4, \C^9)$, the image of the point $\P(V) \cap \{x_0=x_7=x_8=0\}$ in $\overline{U}$ is a singular point.
  In this case, we do not know whether $U$ is sch\"on. In our construction, we do not have this issue because 
  \[
  \P(V) \cap \{x_0=x_7=x_8=x_9=0\}=\emptyset.
  \]
\end{remark}
\begin{proposition}
  The link $L$ of $\Trop(\overline U)$ at the origin is homotopy equivalent to a wedge of eighty-four 2-spheres with a pair of points identified. Consequently,
  \[ H_i(L, \Z) = \begin{cases} \Z, & i=0 \\
    \Z, & i=1 \\
    \Z^{84}, & i=2\\
    0, & i \geq 3
  \end{cases}
\]
\end{proposition}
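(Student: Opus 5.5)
We identify the tropical fan first. Since $\overline U = q(Y_V)$ meets every torus orbit of $X_\Sigma$ transversely, the argument of Remark \ref{rmk:weight one} (via \cite[Proposition 6.4.7]{MS}) shows that $\Trop(U)$ is the union of the cones $Q(\sigma)\in\Sigma$ with $O_{Q(\sigma)}\cap\overline U\neq\emptyset$. By Proposition \ref{ambient}, these are exactly the images of cones $\sigma\in\Sigma_0$ with $O_\sigma\cap Y_V\neq\emptyset$. Because $\P(V)$ realizes the uniform matroid $U_{4,10}$, it meets every orbit of $\P^9$ of codimension at most $3$. Hence $Y_V$ meets every orbit of $X_{\Sigma_0}$, and $\trop(Y_V\cap(\C^*)^9)=|\Sigma_0|$. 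In fact $|\Sigma_0|$ is the $3$-skeleton of the fan of $\P^9$, i.e. the Bergman fan of $U_{4,10}$, and the stellar subdivisions do not change the support. Consequently
\[
\Trop(U)=Q(|\Sigma_0|), \qquad L\cong L(|\Sigma_0|)/(\rho_a\sim\rho_b),
\]
because $Q$ is injective on each cone and the only identification of cones is $\rho_a\mapsto Q(\rho_a)=Q(\rho_b)$. We also need to check that no two points in the relative interiors of different cones get identified. That follows from the statement in Proposition \ref{ambient} that $\Sigma$ is a fan whose cones are the images of the cones of $\Sigma_0$, with only that one coincidence. So $L$ is the simplicial complex $L(\Sigma_0)$ with the two vertices $a$ and $b$ glued together.

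Next we compute $L(\Sigma_0)$. Its support is the $2$-skeleton of the boundary of the $9$-simplex. This is a bouquet of $2$-spheres, and the number of spheres equals the reduced Euler characteristic count
\[
-1+10-45+120=84 .
\]
The stellar subdivisions change the triangulation but not the homeomorphism type, so $L(\Sigma_0)$ is a connected complex homotopy equivalent to $\bigvee^{84}S^2$.

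Identifying two distinct points $a,b$ of a connected CW complex $Y$ is homotopy equivalent to attaching a $1$-cell from $a$ to $b$, i.e. to $Y\vee S^1$. More precisely, the quotient is obtained from the mapping cylinder of $\{a,b\}\to\mathrm{pt}$, and since $Y$ is connected the arc joining $a$ and $b$ inside $Y$ lets us slide the attaching map. Therefore
\[
L\simeq \Big(\bigvee^{84}S^2\Big)\vee S^1,
\]
which is the asserted ``wedge of eighty-four $2$-spheres with a pair of points identified.'' The homology groups listed in the statement follow immediately.

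The main obstacle is the first step: making sure the cones of $\Trop(U)$ are exactly the images $Q(\sigma)$ with no extra overlaps in $\R^8$, so that the link really is this quotient. Within the first step, the delicate point is that $Q$ is not globally injective on $|\Sigma_0|$; we rely on the fan property proved in Proposition \ref{ambient}, which was certified by computer.
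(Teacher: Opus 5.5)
Your proposal is correct and follows the paper's route: $\trop(U)=Q(|\Sigma_0|)$, where $|\Sigma_0|$ is the Bergman fan of $U_{4,10}$; its link is a bouquet of $84$ two-spheres; and $Q$ glues only $\rho_a$ to $\rho_b$. The differences are minor. You obtain $\trop(U)$ from the orbit--cone correspondence for the compactification $\overline U\subset X_\Sigma$, whereas the paper uses \cite[Corollary 3.2.13]{MS} (tropicalization commutes with the torus homomorphism), which is more direct and needs no transversality. You count $84$ via the reduced Euler characteristic of the $2$-skeleton of the $9$-simplex rather than citing \cite{AK06}, and you usefully make explicit why $Q$ induces exactly $L(\Sigma_0)/(a\sim b)$ and why that quotient is $\simeq\bigvee^{84}S^2\vee S^1$.
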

\begin{proof}
  The set $\trop(V \cap (\C^*)^9)$ is the support of the Bergman fan of a uniform matroid of rank 4 on 10 elements.
  This set contains both $\rho_a$ and $\rho_b$. By \cite[Corollary following Theorem 1]{AK06}, the link of $\Trop(V \cap (\C^*)^9)$ at the origin is homotopy equivalent to a wedge of eighty-four 2-spheres.

  By \cite[Corollary 3.2.13]{MS},
  \[ \Trop(U) = \Trop(q)(\Trop(V \cap (\C^*)^9)) = Q(\trop(V \cap (\C^*)^9)). \]
  Hence, $L$ is homotopy equivalent to a wedge of eighty-four 2-spheres with a pair of points identified, which has the described homology groups.
\end{proof}

\section{Further discussion and questions}\label{sec:questions}
\subsection*{Generalizations to arbitrary matroids}
A geometric proof of a statement about realizable matroids often suggests a purely combinatorial proof, which would extend the statement to arbitrary matroids. This leads to the following question.
\begin{question}
    Can the conclusion of Corollary \ref{cor:Bergman} be proved for arbitrary matroids using combinatorial Hodge theory?
\end{question}

In the current proof of Corollary \ref{cor:Bergman}, we use refinements of (augmented) Bergman fans and the decomposition theorem. Both of these ingredients have combinatorial counterparts, namely \cite{ADH} and \cite{AHL}. However, it is not clear to us how to formulate a combinatorial analogue of Artin vanishing for perverse sheaves on an affine variety.

\subsection*{Euler characteristics of $\Omega^p$}
As discussed after Theorem \ref{thm:abelian variety}, the Euler characteristic of a tropical variety $\mathcal X$ can be expressed in terms of the tropical homology
Hodge numbers introduced in \cite{TropicalHomology}, 
\[
\chi(\cX)=\sum_{q}(-1)^q h_{0, q}(\cX).
\]
On the other hand, the generic vanishing theorem (\cite[Theorem~2]{GL}) implies that for a $d$-dimensional smooth subvariety $X$ of an abelian variety,
\[
(-1)^{d-p}\chi(X, \Omega^p)=\sum_{q} (-1)^{d-p-q}h^{p, q}(X)\geq 0.
\]
This suggests the following tropical analogue.
\begin{question}
Let \(\mathcal X\) be a \(d\)-dimensional H-regular subvariety of a tropical
abelian variety. Do the tropical homology Hodge numbers of \(\mathcal X\) satisfy
\[
(-1)^{d - p}\,\chi_p(\cX)\coloneqq \sum_{q}(-1)^{d-p-q}h_{p, q}(\cX) \geq 0
\]
for all $p$?
\end{question}

\subsection*{Fourier--Mukai transformations}
Recently, Ghosh and Shokrieh \cite{GS} constructed a cohomological Fourier--Mukai transform for tropical abelian varieties. Since the theorem of Green--Lazarsfeld that 
\[
(-1)^{\dim X}\chi(X, \mathcal{O}_X)\geq 0
\]
for smooth closed subvarieties $X$ of complex abelian varieties can be proved using Fourier--Mukai transformations \cite{PP}, it is natural to ask whether there is a tropical counterpart of this approach.

\begin{question}
Can the cohomological Fourier--Mukai transform of \cite{GS} be used to give an alternative proof of Theorem \ref{thm:abelian variety}? More generally, can it be used to identify classes of cycles in tropical abelian varieties with nonnegative signed Euler characteristic?
\end{question}

\subsection*{Non-abelian analogues}
Some non-abelian analogues of the Green--Lazarsfeld theorem, and of its topological counterpart, the Franecki--Kapranov theorem \cite{FK}, have recently been established (see \cite{DW2,AW,DW}). In the classical setting, non-abelian analogues of subvarieties of abelian varieties include
smooth projective varieties with negative sectional curvature, and more generally smooth projective varieties with large fundamental group, such as compact smooth Shimura varieties. It would be interesting to develop a corresponding theory in tropical geometry.
\begin{question}
Is there a natural class of ``negatively curved'' or ``hyperbolic'' tropical varieties, or of tropical varieties with large fundamental group, that is closed under Cartesian products and passage to smooth subvarieties? Moreover, does every H-regular tropical variety in such a class have nonnegative signed Euler characteristic?
\end{question}

\subsection*{Chern classes}
For a compact complex manifold, the topological Euler characteristic is equal to the integral of the top Chern class, or equivalently to the degree of the zero-dimensional homology Chern class. Thus the other Chern classes may be viewed as refinements of the Euler characteristic.

The Chern classes of smooth subvarieties of abelian varieties satisfy signed positivity properties. For example, if $X$ is a smooth subvariety of a complex abelian variety, then 
$\Omega_X^1$ is globally generated.
Consequently, the signed Chern classes of $X$ are effective. More precisely, if $c_k(X)$ denotes the $k$-th cohomological Chern class of $T_X$, then $(-1)^k c_k(X)$ is effective.

The Chern--Schwartz--MacPherson (CSM) classes of tropical manifolds were introduced in \cite{ChernClasses}. Here a tropical manifold is a tropical variety locally isomorphic to Bergman fans of matroids (see \cite[Definition~2.3]{ChernClasses}). For a $d$-dimensional smooth subvariety $\mathcal X$ of a tropical abelian variety, its tropical CSM classes satisfy the signed positivity property $(-1)^{d-k}\operatorname{csm}_k(\mathcal X)$ is effective.

\begin{question}
Can one define suitable Chern or CSM classes for H-regular tropical varieties, and do they satisfy the same signed positivity property? More generally, can the signed Euler characteristic inequality of Theorem~\ref{thm:abelian variety} be refined to a signed positivity statement for tropical Chern classes?
\end{question}

\appendix
\section{M2 code for verifying Proposition \ref{ambient}}\label{M2cert}
The following Macaulay2 \cite{M2} code verifies portions of Proposition \ref{ambient}.
It requires roughly 80 seconds with peak memory usage 95MB in Macaulay2 1.22 on an Intel Core i5-1240P (a pretty common laptop processor).

\begin{verbatim}
restart
needsPackage "Polyhedra"

-*
A: linear transform
F: a fan
returns: image of F under A as a fan.
If the images of the cones do not form a fan, an error is thrown per M2 doc
macaulay2.com/doc/Macaulay2-1.22/share/doc/Macaulay2/Polyhedra/html/_fan.html
*-
imgFan = (A, F) -> fan((maxCones F) / (C -> coneFromVData(A * (rays F)_C)));

-*
S: a subset of {1...n}
n: a positive integer
returns: \sum_{i \in S} e_i
*-
std = (S,n) -> transpose matrix {
                 for i from 1 to n list if member(i,S) then 1 else 0
               }


a = std({1,2,3}, 9);
b = std({4,5,6},9);
l = a-b; pr = transpose gens ker transpose l;
Sigma0 = skeleton(3, normalFan simplex 9);
Sigma0 = stellarSubdivision(stellarSubdivision(Sigma0, a), b);

-- no error thrown here means the images of the cones form a fan
Sigma = imgFan(pr, Sigma0);

-- proves that the only two cones of Sigma0 that map to the same cone
-- downstairs are \rho_a and \rho_b
print((for i from 1 to 3 list #cones(i, Sigma0) - #cones(i, Sigma)) == {1,0,0})
\end{verbatim}

\printbibliography

@misc{AP24,
      title={All triangulations have a common stellar subdivision}, 
      author={Karim Adiprasito and Igor Pak},
      year={2024},
      eprint={2404.05930},
      archivePrefix={arXiv},
      primaryClass={math.CO},
      url={https://arxiv.org/abs/2404.05930}, 
}

@article{AW,
    AUTHOR = {Arapura, Donu and Wang, Botong},
     TITLE = {Perverse sheaves on varieties with large fundamental groups},
   JOURNAL = {J. Differential Geom.},
  FJOURNAL = {Journal of Differential Geometry},
    VOLUME = {129},
      YEAR = {2025},
    NUMBER = {1},
     PAGES = {1--15},
      ISSN = {0022-040X,1945-743X},
   MRCLASS = {32Q55 (14D07 14F06 32G20)},
  MRNUMBER = {4856131},
MRREVIEWER = {Lutian\ Zhao},
       DOI = {10.4310/jdg/1736261441},
       URL = {https://doi-org.ezproxy.library.wisc.edu/10.4310/jdg/1736261441},
}

@incollection {BBD,
    AUTHOR = {Be{i}linson, A. A. and Bernstein, J. and Deligne, P.},
     TITLE = {Faisceaux pervers},
 BOOKTITLE = {Analysis and topology on singular spaces, {I} ({L}uminy,
              1981)},
    SERIES = {Ast\'erisque},
    VOLUME = {100},
     PAGES = {5--171},
 PUBLISHER = {Soc. Math. France, Paris},
      YEAR = {1982},
   MRCLASS = {32C38},
  MRNUMBER = {751966},
MRREVIEWER = {Zoghman\ Mebkhout},
}

@article {BW,
    AUTHOR = {Budur, Nero and Wang, Botong},
     TITLE = {The signed {E}uler characteristic of very affine varieties},
   JOURNAL = {Int. Math. Res. Not. IMRN},
  FJOURNAL = {International Mathematics Research Notices. IMRN},
      YEAR = {2015},
    NUMBER = {14},
     PAGES = {5710--5714},
      ISSN = {1073-7928,1687-0247},
   MRCLASS = {14R10},
  MRNUMBER = {3384454},
MRREVIEWER = {Dan\ Yan},
       DOI = {10.1093/imrn/rnu107},
       URL = {https://doi-org.ezproxy.library.wisc.edu/10.1093/imrn/rnu107},
}

@article {CCKR,
    AUTHOR = {Chan, Melody and Clader, Emily and Klivans, Caroline and Ross,
              Dustin},
     TITLE = {Piecewise-exponential functions and {E}hrhart fans},
   JOURNAL = {J. Lond. Math. Soc. (2)},
  FJOURNAL = {Journal of the London Mathematical Society. Second Series},
    VOLUME = {113},
      YEAR = {2026},
    NUMBER = {1},
     PAGES = {Paper No. e70434, 49},
      ISSN = {0024-6107,1469-7750},
   MRCLASS = {14M25 (05E14 14T15 52B20 52B40)},
  MRNUMBER = {5020577},
       DOI = {10.1112/jlms.70434},
       URL = {https://doi-org.ezproxy.library.wisc.edu/10.1112/jlms.70434},
}

@book {CLS11,
    AUTHOR = {Cox, David A. and Little, John B. and Schenck, Henry K.},
     TITLE = {Toric varieties},
    SERIES = {Graduate Studies in Mathematics},
    VOLUME = {124},
 PUBLISHER = {American Mathematical Society, Providence, RI},
      YEAR = {2011},
     PAGES = {xxiv+841},
      ISBN = {978-0-8218-4819-7},
   MRCLASS = {14M25 (05A15 05E45 52B12)},
  MRNUMBER = {2810322},
MRREVIEWER = {Ivan\ Arzhantsev},
       DOI = {10.1090/gsm/124},
       URL = {https://doi-org.ezproxy.library.wisc.edu/10.1090/gsm/124},
}

@article {Deligne,
    AUTHOR = {Deligne, Pierre},
     TITLE = {Th\'eorie de {H}odge. {II}},
   JOURNAL = {Inst. Hautes \'Etudes Sci. Publ. Math.},
  FJOURNAL = {Institut des Hautes \'Etudes Scientifiques. Publications
              Math\'ematiques},
    NUMBER = {40},
      YEAR = {1971},
     PAGES = {5--57},
      ISSN = {0073-8301,1618-1913},
   MRCLASS = {14C30 (14F15)},
  MRNUMBER = {498551},
MRREVIEWER = {J.\ H. M. Steenbrink},
       URL = {http://www.numdam.org/item?id=PMIHES_1971__40__5_0},
}

@misc{DW,
      title={Linear Chern-Hopf-Thurston conjecture}, 
      author={Ya Deng and Botong Wang},
      year={2024},
      eprint={2405.12012},
      archivePrefix={arXiv},
      primaryClass={math.AG},
      url={https://arxiv.org/abs/2405.12012}, 
}

@article {DP,
    AUTHOR = {Dimca, Alexandru and Papadima, Stefan},
     TITLE = {Non-abelian cohomology jump loci from an analytic viewpoint},
   JOURNAL = {Commun. Contemp. Math.},
  FJOURNAL = {Communications in Contemporary Mathematics},
    VOLUME = {16},
      YEAR = {2014},
    NUMBER = {4},
     PAGES = {1350025, 47},
      ISSN = {0219-1997,1793-6683},
   MRCLASS = {55N25 (14B12)},
  MRNUMBER = {3231055},
MRREVIEWER = {Andrzej\ Kozlowski},
       DOI = {10.1142/S0219199713500259},
       URL = {https://doi-org.ezproxy.library.wisc.edu/10.1142/S0219199713500259},
}

@article {FK,
    AUTHOR = {Franecki, Joseph and Kapranov, Mikhail},
     TITLE = {The {G}auss map and a noncompact {R}iemann-{R}och formula for
              constructible sheaves on semiabelian varieties},
   JOURNAL = {Duke Math. J.},
  FJOURNAL = {Duke Mathematical Journal},
    VOLUME = {104},
      YEAR = {2000},
    NUMBER = {1},
     PAGES = {171--180},
      ISSN = {0012-7094,1547-7398},
   MRCLASS = {14C40 (32C38 32S60)},
  MRNUMBER = {1769729},
MRREVIEWER = {Laurent\ Manivel},
       DOI = {10.1215/S0012-7094-00-10417-6},
       URL = {https://doi-org.ezproxy.library.wisc.edu/10.1215/S0012-7094-00-10417-6},
}

@article {Hacking,
    AUTHOR = {Hacking, Paul},
     TITLE = {The homology of tropical varieties},
   JOURNAL = {Collect. Math.},
  FJOURNAL = {Universitat de Barcelona. Collectanea Mathematica},
    VOLUME = {59},
      YEAR = {2008},
    NUMBER = {3},
     PAGES = {263--273},
      ISSN = {0010-0757,2038-4815},
   MRCLASS = {14T05 (14F43 14M25)},
  MRNUMBER = {2452307},
MRREVIEWER = {Joaquim\ Ro\'e},
       DOI = {10.1007/BF03191187},
       URL = {https://doi-org.ezproxy.library.wisc.edu/10.1007/BF03191187},
}

@article {LMW,
    AUTHOR = {Liu, Yongqiang and Maxim, Laurentiu and Wang, Botong},
     TITLE = {Topology of subvarieties of complex semi-abelian varieties},
   JOURNAL = {Int. Math. Res. Not. IMRN},
  FJOURNAL = {International Mathematics Research Notices. IMRN},
      YEAR = {2021},
    NUMBER = {14},
     PAGES = {11169--11208},
      ISSN = {1073-7928,1687-0247},
   MRCLASS = {14F45 (14F06 32S22 32S50)},
  MRNUMBER = {4285747},
MRREVIEWER = {P.\ E.\ Newstead},
       DOI = {10.1093/imrn/rnaa242},
       URL = {https://doi-org.ezproxy.library.wisc.edu/10.1093/imrn/rnaa242},
}

@book {MS,
    AUTHOR = {Maclagan, Diane and Sturmfels, Bernd},
     TITLE = {Introduction to tropical geometry},
    SERIES = {Graduate Studies in Mathematics},
    VOLUME = {161},
 PUBLISHER = {American Mathematical Society, Providence, RI},
      YEAR = {2015},
     PAGES = {xii+363},
      ISBN = {978-0-8218-5198-2},
   MRCLASS = {14T05 (05B35 14M25 15A80 52B70)},
  MRNUMBER = {3287221},
MRREVIEWER = {Patrick\ Popescu-Pampu},
       DOI = {10.1090/gsm/161},
       URL = {https://doi-org.ezproxy.library.wisc.edu/10.1090/gsm/161},
}

@incollection {MZ,
    AUTHOR = {Mikhalkin, Grigory and Zharkov, Ilia},
     TITLE = {Tropical curves, their {J}acobians and theta functions},
 BOOKTITLE = {Curves and abelian varieties},
    SERIES = {Contemp. Math.},
    VOLUME = {465},
     PAGES = {203--230},
 PUBLISHER = {Amer. Math. Soc., Providence, RI},
      YEAR = {2008},
      ISBN = {978-0-8218-4334-5},
   MRCLASS = {14T05 (05C38 14H40 14H42)},
  MRNUMBER = {2457739},
       DOI = {10.1090/conm/465/09104},
       URL = {https://doi-org.ezproxy.library.wisc.edu/10.1090/conm/465/09104},
}

@incollection{Saito2,
    AUTHOR = {Saito, Morihiko},
     TITLE = {Introduction to mixed {H}odge modules},
      NOTE = {Actes du Colloque de Th\'eorie de Hodge (Luminy, 1987)},
   JOURNAL = {Ast\'erisque},
  FJOURNAL = {Ast\'erisque},
    NUMBER = {179-180},
      YEAR = {1989},
     PAGES = {10, 145--162},
      ISSN = {0303-1179,2492-5926},
   MRCLASS = {32S35 (14C05 14C30 32J25)},
  MRNUMBER = {1042805},
}

@article {Saito1,
    AUTHOR = {Saito, Morihiko},
     TITLE = {Mixed {H}odge modules},
   JOURNAL = {Publ. Res. Inst. Math. Sci.},
  FJOURNAL = {Kyoto University. Research Institute for Mathematical
              Sciences. Publications},
    VOLUME = {26},
      YEAR = {1990},
    NUMBER = {2},
     PAGES = {221--333},
      ISSN = {0034-5318,1663-4926},
   MRCLASS = {14D07 (14C30 32J25)},
  MRNUMBER = {1047415},
MRREVIEWER = {Min\ Ho\ Lee},
       DOI = {10.2977/prims/1195171082},
       URL = {https://doi-org.ezproxy.library.wisc.edu/10.2977/prims/1195171082},
}

@incollection{saito3,
    AUTHOR = {Saito, Morihiko},
     TITLE = {On {K}oll\'ar's conjecture},
 BOOKTITLE = {Several complex variables and complex geometry, {P}art 2
              ({S}anta {C}ruz, {CA}, 1989)},
    SERIES = {Proc. Sympos. Pure Math.},
    VOLUME = {52, Part 2},
     PAGES = {509--517},
 PUBLISHER = {Amer. Math. Soc., Providence, RI},
      YEAR = {1991},
      ISBN = {0-8218-1490-7},
   MRCLASS = {14D07 (14C30 14F17 32L20)},
  MRNUMBER = {1128566},
MRREVIEWER = {H\'el\`ene\ Esnault},
       DOI = {10.1090/pspum/052.2/1128566},
       URL = {https://doi-org.ezproxy.library.wisc.edu/10.1090/pspum/052.2/1128566},
}

@article {saito4,
    AUTHOR = {Saito, Morihiko},
     TITLE = {Mixed {H}odge complexes on algebraic varieties},
   JOURNAL = {Math. Ann.},
  FJOURNAL = {Mathematische Annalen},
    VOLUME = {316},
      YEAR = {2000},
    NUMBER = {2},
     PAGES = {283--331},
      ISSN = {0025-5831,1432-1807},
   MRCLASS = {14C30 (18E30)},
  MRNUMBER = {1741272},
       DOI = {10.1007/s002080050014},
       URL = {https://doi-org.ezproxy.library.wisc.edu/10.1007/s002080050014},
}

@article {ST,
    AUTHOR = {Sturmfels, Bernd and Tevelev, Jenia},
     TITLE = {Elimination theory for tropical varieties},
   JOURNAL = {Math. Res. Lett.},
  FJOURNAL = {Mathematical Research Letters},
    VOLUME = {15},
      YEAR = {2008},
    NUMBER = {3},
     PAGES = {543--562},
      ISSN = {1073-2780},
   MRCLASS = {14Q15},
  MRNUMBER = {2407231},
MRREVIEWER = {Joaquim\ Ro\'e},
       DOI = {10.4310/MRL.2008.v15.n3.a14},
       URL = {https://doi-org.ezproxy.library.wisc.edu/10.4310/MRL.2008.v15.n3.a14},
}

@article {EGM,
    AUTHOR = {Elduque, Eva and Geske, Christian and Maxim, Laurentiu},
     TITLE = {On the signed {E}uler characteristic property for subvarieties
              of abelian varieties},
   JOURNAL = {J. Singul.},
  FJOURNAL = {Journal of Singularities},
    VOLUME = {17},
      YEAR = {2018},
     PAGES = {368--387},
      ISSN = {1949-2006},
   MRCLASS = {14K12 (14F05 32S60 55N33 58K05)},
  MRNUMBER = {3862497},
MRREVIEWER = {Pawe\l\ Bor\'owka},
       DOI = {10.5427/jsing.2018.17p},
       URL = {https://doi-org.ezproxy.library.wisc.edu/10.5427/jsing.2018.17p},
}

@misc{GS,
      title={Tropical Poincar\'e bundle, Fourier-Mukai transform, and a generalized Poincar\'e formula}, 
      author={Soham Ghosh and Farbod Shokrieh},
      year={2025},
      eprint={2503.12835},
      archivePrefix={arXiv},
      primaryClass={math.AG},
      url={https://arxiv.org/abs/2503.12835}, 
}

@article {GL,
    AUTHOR = {Green, Mark and Lazarsfeld, Robert},
     TITLE = {Deformation theory, generic vanishing theorems, and some
              conjectures of {E}nriques, {C}atanese and {B}eauville},
   JOURNAL = {Invent. Math.},
  FJOURNAL = {Inventiones Mathematicae},
    VOLUME = {90},
      YEAR = {1987},
    NUMBER = {2},
     PAGES = {389--407},
      ISSN = {0020-9910,1432-1297},
   MRCLASS = {32G10 (14C22 14H15)},
  MRNUMBER = {910207},
MRREVIEWER = {Sheldon\ H.\ Katz},
       DOI = {10.1007/BF01388711},
       URL = {https://doi-org.ezproxy.library.wisc.edu/10.1007/BF01388711},
}

@article {PP,
    AUTHOR = {Pareschi, Giuseppe and Popa, Mihnea},
     TITLE = {G{V}-sheaves, {F}ourier-{M}ukai transform, and generic
              vanishing},
   JOURNAL = {Amer. J. Math.},
  FJOURNAL = {American Journal of Mathematics},
    VOLUME = {133},
      YEAR = {2011},
    NUMBER = {1},
     PAGES = {235--271},
      ISSN = {0002-9327,1080-6377},
   MRCLASS = {14F05 (14F17)},
  MRNUMBER = {2752940},
MRREVIEWER = {Daniele\ Faenzi},
       DOI = {10.1353/ajm.2011.0000},
       URL = {https://doi-org.ezproxy.library.wisc.edu/10.1353/ajm.2011.0000},
}

@misc{ChernClasses,
      title={Chern Classes of Tropical Manifolds}, 
      author={Lucía López de Medrano and Felipe Rincón and Kris Shaw},
      year={2023},
      eprint={2309.00229},
      archivePrefix={arXiv},
      primaryClass={math.CO},
      url={https://arxiv.org/abs/2309.00229}, 
}

@misc{AB21,
      title={Filtered geometric lattices and Lefschetz Section Theorems over the tropical semiring},
      author={Karim Adiprasito and Anders Björner},
      year={2021},
      eprint={1401.7301},
      archivePrefix={arXiv},
      primaryClass={math.AG},
      url={https://arxiv.org/abs/1401.7301},
}

@article {TropicalHomology,
    AUTHOR = {Itenberg, Ilia and Katzarkov, Ludmil and Mikhalkin, Grigory
              and Zharkov, Ilia},
     TITLE = {Tropical homology},
   JOURNAL = {Math. Ann.},
  FJOURNAL = {Mathematische Annalen},
    VOLUME = {374},
      YEAR = {2019},
    NUMBER = {1-2},
     PAGES = {963--1006},
      ISSN = {0025-5831,1432-1807},
   MRCLASS = {14T05},
  MRNUMBER = {3961331},
MRREVIEWER = {Helge\ Ruddat},
       DOI = {10.1007/s00208-018-1685-9},
       URL = {https://doi-org.ezproxy.library.wisc.edu/10.1007/s00208-018-1685-9},
}

@misc{DW2,
      title={$L^2$-vanishing theorem and a conjecture of Koll\'ar}, 
      author={Ya Deng and Botong Wang},
      year={2025},
      eprint={2409.11399},
      archivePrefix={arXiv},
      primaryClass={math.AG},
      url={https://arxiv.org/abs/2409.11399}, 
}

@article {Payne,
    AUTHOR = {Payne, Sam},
     TITLE = {Boundary complexes and weight filtrations},
   JOURNAL = {Michigan Math. J.},
  FJOURNAL = {Michigan Mathematical Journal},
    VOLUME = {62},
      YEAR = {2013},
    NUMBER = {2},
     PAGES = {293--322},
      ISSN = {0026-2285,1945-2365},
   MRCLASS = {14E30 (14B05 14R99 14T05)},
  MRNUMBER = {3079265},
MRREVIEWER = {I.\ Dolgachev},
       DOI = {10.1307/mmj/1370870374},
       URL = {https://doi-org.ezproxy.library.wisc.edu/10.1307/mmj/1370870374},
}

@article{MikhalkinZiegler2008,
  author = {Grigory Mikhalkin and Günter M. Ziegler},
  title = {Connectivity of weighted rank 3 matroids},
  journal = {Oberwolfach Reports},
  volume = {5},
  number = {4},
  pages = {2477--2556},
  year = {2008},
  note = {Problem Session edited by J. Pach}
}

@article {ADH,
    AUTHOR = {Ardila, Federico and Denham, Graham and Huh, June},
     TITLE = {Lagrangian geometry of matroids},
   JOURNAL = {J. Amer. Math. Soc.},
  FJOURNAL = {Journal of the American Mathematical Society},
    VOLUME = {36},
      YEAR = {2023},
    NUMBER = {3},
     PAGES = {727--794},
      ISSN = {0894-0347,1088-6834},
   MRCLASS = {05B35 (14T15 14T20)},
  MRNUMBER = {4583774},
MRREVIEWER = {Christopher\ Eur},
       DOI = {10.1090/jams/1009},
       URL = {https://doi-org.ezproxy.library.wisc.edu/10.1090/jams/1009},
}

@misc{AHL,
      title={A decomposition theorem for Lefschetz modules}, 
      author={Omid Amini and June Huh and Matt Larson},
      year={2025},
      eprint={2511.02026},
      archivePrefix={arXiv},
      primaryClass={math.AG},
      url={https://arxiv.org/abs/2511.02026}, 
}

@Misc{M2,
author = {Grayson, Daniel R. and Stillman, Michael E.},
title = {Macaulay2, a software system for research in algebraic geometry},
howpublished = {Available at \url{http://www2.macaulay2.com}}
}

@article{AK06,
  author  = {Ardila, Federico and Klivans, Caroline J.},
  title   = {The Bergman Complex of a Matroid and Phylogenetic Trees},
  journal = {Journal of Combinatorial Theory, Series B},
  volume  = {96},
  number  = {1},
  pages   = {38--49},
  year    = {2006},
  doi     = {10.1016/j.jctb.2005.06.004}
}
\end{document}